\documentclass[12pt,leqno,draft]{article} 
\usepackage{amsmath,amssymb,amsthm,amsfonts,bm}
\usepackage{enumerate,color}
\topmargin=-1cm
\oddsidemargin=0cm
\pagestyle{plain}
\textwidth=16cm
\textheight=23cm

\makeatletter
\def\@cite#1#2{[{{\bfseries #1}\if@tempswa , #2\fi}]}
\renewcommand{\section}{%
\@startsection{section}{1}{\z@}
{0.5truecm plus -1ex minus -.2ex}%
{1.0ex plus .2ex}{\bfseries\large}}
\def\@seccntformat#1{\csname the#1\endcsname.\ }
\makeatother

\setlength\arraycolsep{2pt}

\numberwithin{equation}{section} 
\pagestyle{plain}
\newtheorem{thm}{Theorem}[section]

\newtheorem{lem}[thm]{Lemma}

\theoremstyle{definition}
\newtheorem{df}{Definition}[section]
\newtheorem{remark}{Remark}[section]

\newtheorem*{prth1.1}{Proof of Theorem 1.1}
\newtheorem*{prth1.2}{Proof of Theorem 1.2}
\newtheorem*{prth1.3}{Proof of Theorem 1.3}
\newtheorem*{prth1.4}{Proof of Theorem 1.4}

\newcommand{\ep}{\varepsilon}

\newcommand{\RN}{\mathbb{R}^N}

\begin{document}
\footnote[0]
    {2010{\it Mathematics Subject Classification}\/. 
    Primary: 35K59. 
    }
\footnote[0]
    {{\it Key words and phrases}\/: 
    nonlocal Cahn--Hilliard systems; unbounded domains. 
    }
\begin{center}
    \Large{{\bf 
Existence and energy estimates of weak solutions for \\
nonlocal Cahn--Hilliard equations\\ on unbounded domains 
           }}
\end{center}
\vspace{5pt}
\begin{center}
    Shunsuke Kurima\footnote{Partially supported 
    by JSPS Research Fellowships for Young Scientists (No.\ 18J21006).} \\
    \vspace{2pt}
    Department of Mathematics, 
    Tokyo University of Science\\
    1-3, Kagurazaka, Shinjuku-ku, Tokyo 162-8601, Japan\\
    {\tt shunsuke.kurima@gmail.com}\\
    \vspace{2pt}
\end{center}
\begin{center}    
    \small \today
\end{center}

\vspace{2pt}
\newenvironment{summary}
{\vspace{.5\baselineskip}\begin{list}{}{%
     \setlength{\baselineskip}{0.85\baselineskip}
     \setlength{\topsep}{0pt}
     \setlength{\leftmargin}{12mm}
     \setlength{\rightmargin}{12mm}
     \setlength{\listparindent}{0mm}
     \setlength{\itemindent}{\listparindent}
     \setlength{\parsep}{0pt}
     \item\relax}}{\end{list}\vspace{.5\baselineskip}}
\begin{summary}
{\footnotesize {\bf Abstract.}
This paper considers the initial-boundary value problem for 
the nonlocal Cahn--Hilliard equation 
\begin{equation*}
    \partial_t\varphi + (-\Delta+1)(a(\cdot)\varphi -J\ast\varphi + G'(\varphi)) = 0
         \quad \mbox{in}\ \Omega\times(0, T) 
\end{equation*}
in an {\it unbounded} domain $\Omega \subset \RN$ 
with smooth bounded boundary, where 
$N\in\mathbb{N}$, $T>0$, and 
$a(\cdot), J, G$ are given functions. 
In the case that $\Omega$ is a bounded domain and 
$-\Delta+1$ is replaced with $-\Delta$, 
this problem has been studied 
by using a Faedo--Galerkin approximation scheme 
considering the compactness of the Neumann operator $-\Delta+1$ 
(cf.\ \cite{CFG-2012, GG-2014}). 
However, the compactness of the Neumann operator $-\Delta+1$ 
breaks down   
when $\Omega$ is an {\it unbounded} domain.  
The present work establishes existence and energy estimates of weak solutions 
for the above problem 
on an {\it unbounded} domain. }
\end{summary}
\vspace{10pt}

\newpage

\section{Introduction and results} \label{Sec1}

A well-known model 
for the phase separation in a binary alloys system 
is called the Cahn--Hilliard model 
which was produced in \cite{CH-1958}. 
The following equation is the basic form of this model:   
\begin{equation*}
 \begin{cases}\label{E0}\tag{E0}
 \partial_{t}\varphi  - \mbox{div}(\kappa(\varphi)\nabla\mu) = 0 
 & \mbox{in}\ \Omega\times(0, T), \\[-0mm]
 \mu = -\delta\Delta\varphi + \frac{1}{\delta}G'(\varphi)  
         & \mbox{in}\ \Omega\times(0, T), 
 \end{cases}
\end{equation*}
where $\Omega \subset \mathbb{R}^{d}$ ($d=2, 3$) is a bounded domain, 
$\kappa$ is the mobility coefficient, $\delta>0$ is a given small constant 
with respect to the thickness of the interface, 
and $G'$ is the first derivative of a double well potential $G$. 
The above chemical potential $\mu$ 
is the Fr\'echet derivative of the free energy functional (see \cite{CH-1958})  
$$
E_{0}(\varphi) = 
\int_{\Omega} \left(\frac{\delta}{2}|\nabla\varphi|^2 
                                             + \frac{1}{\delta}G(\varphi) \right)\,dx. 
$$

Giacomin--Lebowitz \cite{GL0} 
observed that 
the Cahn--Hilliard equation \eqref{E0} has no microscopic derivation. 
They started from the microscopic viewpoint 
and proposed a macroscopic equation 
describing the phase segregation phenomena (see \cite{GL1, GL2}).
This equation is a nonlocal Cahn--Hilliard equation. 
Also, there are nonlocal Cahn--Hilliard--Navier--Stokes systems 
which model the evolution of an isothermal mixture of 
two incompressible fluids 
considering nonlocal interactions between the molecules. 

Nonlocal Cahn--Hilliard equations 
(see, for instance, 
\cite{ABG-2015, BH-2005, C-1961, DG-2015, GG-2014, 
GLWW-2014, GWW-2014, H-2004}) 
and 
nonlocal Cahn--Hilliard--Navier--Stokes equations 
(see, for instance, 
\cite{CFG-2012, FGG-2016, FG1, FG2, FGK-2013, FRS-2016}) 
have been 
studied by many authors. 
In particular, 
the nonlocal Cahn--Hilliard equation 
(see e.g., \cite{ABG-2015, BH-2005, GG-2014})  
\begin{equation*}
 \begin{cases}\label{E1}\tag{E1}
 \partial_{t}\varphi  - \Delta \mu = 0 & \mbox{in}\ \Omega\times(0, T), \\[-0.5mm]
 \mu = a(\cdot)\varphi -J\ast\varphi + G'(\varphi)  
         & \mbox{in}\ \Omega\times(0, T)
 \end{cases}
\end{equation*}
and the nonlocal Cahn--Hilliard--Navier--Stokes equation 
(see e.g., \cite{CFG-2012, FG1, FG2, FRS-2016}) 
\begin{equation*}
 \begin{cases}\label{E2}\tag{E2}
 \partial_{t}\varphi + u\cdot \nabla\varphi - \Delta \mu = 0 
         & \mbox{in}\ \Omega\times(0, T), \\[-0.5mm]
 \mu = a(\cdot)\varphi -J\ast\varphi + G'(\varphi)  
         & \mbox{in}\ \Omega\times(0, T), \\
 u_{t} - 2\mbox{div}(\nu(\varphi)Du) + (u\cdot\nabla)u + \nabla P 
 = \mu\nabla\varphi + h 
         & \mbox{in}\ \Omega\times(0, T), \\
 \mbox{div}(u) = 0 
         & \mbox{in}\ \Omega\times(0, T)
 \end{cases}
\end{equation*}
have been studied, 
where 
$\Omega \subset \mathbb{R}^{d}$ ($d=2, 3$) is a bounded domain, 
$J$ is an interaction kernel such that $J(x)=J(-x)$ and 
\begin{align*}
(J\ast\varphi)(x):=\int_{\Omega}J(x-y)\varphi(y)\,dy, \qquad 
a(x):=\int_{\Omega}J(x-y)\,dy,
\end{align*} 
$\nu$ denotes the viscosity, $Du:=\frac{1}{2}(\nabla u + (\nabla u)^{tr})$, 
$P$ is the pressure, 
$h$ means volume forces applied to the binary mixture fluid,   
and $G'$ is the first derivative of a double well potential $G$. 
The free energy functional in \eqref{E1} and \eqref{E2} is given by 
$$
E(\varphi(t)):=
\frac{1}{4}\int_{\Omega}\int_{\Omega}J(x-y)(\varphi(x, t)-\varphi(y, t))^2\,dxdy 
+ \int_{\Omega}G(\varphi(x, t))\,dx. 
$$
Existence and energy estimates of weak solutions for 
\eqref{E1} and \eqref{E2}  
have been proved 
by using a Faedo--Galerkin approximation scheme 
considering the compactness of the Neumann operator $-\Delta+1$ 
under the following conditions and others 
(cf.\ \cite{CFG-2012, GG-2014}):  
\begin{enumerate} 
\setlength{\itemsep}{-0.5mm}
 \item[(H1)] $J\in W^{1, 1}(\RN)$, $J(x)=J(-x)$, 
 $a(x):=\int_{\Omega}J(x-y)\,dy \geq0$ a.a.\ $x\in\Omega$. 
 \item[(H2)] $G \in C^{2, 1}_{\rm loc}(\mathbb{R})$ and 
      $$
      G''(s) + \inf_{x\in\Omega}a(x) \geq d_{0}
      $$
 holds for all $s\in\mathbb{R}$ with some constant $d_{0}>0$. 
 \item[(H3)] There exist $d_{1}>\frac{1}{2}\|J\|_{L^1(\RN)}$ 
 and $d_{2} \in \mathbb{R}$ such that 
         $$
         G(s) \geq d_{1}s^2-d_{2}\quad \mbox{for all}\ s\in\mathbb{R}.
         $$
 \item[(H4)] $\varphi_{0}\in L^2(\Omega)$ 
 and 
 $G(\varphi_{0})\in L^1(\Omega)$. 
 \end{enumerate}
 We can verify that 
 the regular potential $G(r) = (r^2-1)^2=r^4-2r^2+1$ ($G'(r)=4r^3-4r$) 
 satisfies (H2)-(H4).    
 The $L^2(0, T; H^1(\Omega))$-estimate for $\mu$ can be established 
 by the Poincar\'e--Wirtinger inequality 
 (see e.g., \cite{CFG-2012, DG-2015, FGG-2016, FG1, FG2, GG-2014}). 
 However, in the case that $\Omega \subset \RN$ is an {\it unbounded} domain, 
 the inequality and 
 the above Galerkin method considering the compactness 
 cannot be used (directly). 
 Moreover, 
 the above regular potential does not satisfy (H4) and  
 the condition (H3) is not appropriate 
 in the case of {\it unbounded} domains 
 because the constant $d_{2}$ is not integrable on {\it unbounded} domains. 

Cahn--Hilliard equations on unbounded domains 
were studied by a few authors (see e.g., \cite{B-2006, FKY-2017, KY1, KY2}).  
In particular, nonlocal Cahn--Hilliard equations on unbounded domains 
have not been studied yet. 
The case of unbounded domains has the mathematical difficult point that 
compactness methods cannot be applied directly. 
It would be interesting to construct an applicable theory for the case of 
unbounded domains and to set assumptions for the case of unbounded domains 
trying to keep a typical example 
in the case of bounded domains (previous works) 
as much as possible. 
By considering the case of unbounded domains, 
it would be possible to make a new finding 
which the case of bounded domains 
does not have. 
Also, the new finding would be useful 
for other study of partial differential equations. 
This article considers the initial-boundary value problem 
on an {\it unbounded} domain for nonlocal Cahn--Hilliard equations  
%
%
 \begin{equation}\label{P}\tag{P}
  \begin{cases} 
    \partial_t\varphi + (-\Delta+1)\mu = 0
         & \mbox{in}\ \Omega\times(0, T), \\[-0.5mm]
    \mu = a(\cdot)\varphi -J\ast\varphi + G'(\varphi)  
         & \mbox{in}\ \Omega\times(0, T), \\[-0mm]
    \partial_{\nu}\mu = 0                                   
         & \mbox{on}\ \partial\Omega\times(0, T),\\[-1mm]
    \varphi(\cdot, 0)=\varphi_0                                    
         & \mbox{in}\ \Omega  
  \end{cases}
\end{equation}
by passing to the limit in the following system as $\ep\searrow0$: 
%
%
%
\begin{equation}\label{Pep}\tag*{(P)$_{\ep}$}
  \begin{cases} 
    \partial_t\varphi_{\ep}+(-\Delta+1)\mu_{\ep} = 0
         & \mbox{in}\ \Omega\times(0, T), \\[-0.5mm]
    \mu_{\ep} =  \ep(-\Delta+1)\varphi_{\ep} 
                      + a(\cdot)\varphi_{\ep} -J\ast\varphi_{\ep} + G_{\ep}'(\varphi_{\ep}) 
                             + \ep\partial_t\varphi_{\ep}
         & \mbox{in}\ \Omega\times(0, T), \\[-0mm]
    \partial_{\nu}\mu_{\ep} = \partial_{\nu}\varphi_{\ep} = 0                                   
         & \mbox{on}\ \partial\Omega\times(0, T),\\[-1mm]
   \varphi_{\ep}(\cdot, 0)=\varphi_{0\ep}                                         
         & \mbox{in}\ \Omega, 
  \end{cases}
\end{equation}
where $\Omega$ is an {\it unbounded} domain in $\RN$ 
with smooth bounded boundary $\partial\Omega$ 
(e.g., $\Omega = \mathbb{R}^{N}\setminus \overline{B(0, R)}$,  
where $B(0, R)$ is the open ball with center $0$ and radius $R>0$), 
$\partial_{\nu}$ denotes differentiation with
respect to the outward normal of $\partial\Omega$, 
$N\in\mathbb{N}$, $T>0$, $\ep>0$, and 
$a(\cdot), J, G, G_{\ep}, \varphi_{0}, \varphi_{0\ep}$ 
are given functions in the following conditions (A1)-(A8):   
%
%
%
 \begin{enumerate} 
 \item[(A1)] $J\in W^{1, 1}(\RN)$, $J(x)=J(-x)$, 
 $a(x):=\int_{\Omega}J(x-y)\,dy \geq0$ a.a.\ $x\in\Omega$. 
 \item[(A2)] $G=\widehat{\beta}+\widehat{\pi}$, 
 where $\widehat{\beta}, \widehat{\pi} \in C^1(\mathbb{R})$.  
 \item[(A3)] $\beta:=\widehat{\beta}\,': \mathbb{R} \to \mathbb{R}$                                
 is a maximal monotone function 
 and $\beta(0) = 0$. 
 $\widehat{\beta}$ is nonnegative and convex and  
 $\widehat{\beta}(0) = 0$. 
 \item[(A4)] $\pi:=\widehat{\pi}' : \mathbb{R} \to \mathbb{R}$ is                          
 a Lipschitz continuous function and $\pi(0) = \widehat{\pi}(0) = 0$.  
 \item[(A5)] $G(r) + \frac{\|\pi'\|_{L^{\infty}(\mathbb{R})}}{2}r^2 \geq 0$ 
 for all $r \in \mathbb{R}$.
 \item[(A6)] $G_{\ep}=\widehat{\beta_{\ep}}+\widehat{\pi}$, 
 where $\widehat{\beta_{\ep}}: \mathbb{R} \to \mathbb{R}$ 
 is the Moreau--Yosida regularization of $\widehat{\beta}$: 
 $$
 \widehat{\beta_{\ep}}(r) 
 = \inf_{s\in\mathbb{R}}\left\{\frac{1}{2\ep}|r-s|^2 + \widehat{\beta}(s) \right\}. 
 $$ 
 \item[(A7)] There exist $c_{0}>0$ and $0<c_{1}<\frac{c_{0}}{2}$ such that 
 $$
 \|J\|_{L^1(\RN)} < c_{0} + c_{1}\quad \mbox{and} \quad 
 \inf_{x\in\Omega} a(x) \geq c_{0} + \|\pi'\|_{L^{\infty}(\mathbb{R})}. 
 $$ 
 \item[(A8)] $\varphi_{0}\in L^2(\Omega)$ 
 and 
 $G(\varphi_{0})\in L^1(\Omega)$. Moreover, 
 let $\varphi_{0\ep}\in H^1(\Omega)$ 
 satisfy $G(\varphi_{0\ep})\in L^1(\Omega)$ 
 and 
   \begin{equation*}
   \|\varphi_{0\ep}\|_{L^2(\Omega)}^2 \leq c_{2},\quad 
   \|G(\varphi_{0\ep})\|_{L^1(\Omega)} \leq c_{2}, \quad 
   \ep\|\nabla \varphi_{0\ep}\|_{(L^2(\Omega))^N}^2 \leq c_{2}
   \end{equation*}
 for all $\ep>0$, where $c_{2} > 0$ 
 is a constant independent of $\ep$; 
 in addition, 
 $\varphi_{0\ep}\to \varphi_{0}$ in $L^2(\Omega)$ as 
 $\ep\searrow0$.
 \end{enumerate}
The function $G(r) = r^4-2r^2$ ($G'(r)=4r^3-4r$) satisfies (A1)-(A8) 
(see Section \ref{Sec2}). 
%
%
%
\begin{remark}\label{MYreg}
It holds that 
$\widehat{\beta_{\ep}}(r)=\frac{1}{2\ep}|r-J_{\ep}^{\beta}(r)|^2 
+ \widehat{\beta}(J_{\ep}^{\beta}(r))$ 
for all $r\in\mathbb{R}$, 
where $J_{\ep}^{\beta}$ is the resolvent operator of $\beta$ 
on $\mathbb{R}$.  
The derivative of $\widehat{\beta_{\ep}}$ is $\beta_{\ep}$, 
where $\beta_{\ep}$ is the Yosida approximation operator of $\beta$ 
on $\mathbb{R}$.   
Moreover, the inequalities $0\leq \widehat{\beta_{\ep}}(r) \leq \widehat{\beta}(r)$ 
hold for all $r\in\mathbb{R}$ (see e.g., \cite[Theorem 2.9, p.\ 48]{Barbu2}). 
\end{remark}

%
%
%
This article puts the Hilbert spaces 
   $$
   H:=L^2(\Omega), \quad V:=H^1(\Omega)
   $$
 with inner products $(u_{1}, u_{2})_{H}:=\int_{\Omega}u_{1}u_{2}\,dx$ 
 ($u_{1}, u_{2} \in H$)  
 and $(v_{1}, v_{2})_{V}:=
 \int_{\Omega}\nabla v_{1}\cdot\nabla v_{2}\,dx + \int_{\Omega} v_{1}v_{2}\,dx$ 
 ($v_{1}, v_{2} \in V$), 
 respectively, 
 and with norms $\|u\|_{H}:=(u, u)_{H}^{1/2}$ ($u\in H$) and 
 $\|v\|_{V}:=(v, v)_{V}^{1/2}$ ($v\in V$), respectively.  
 Moreover, this paper uses  
   $$
   W:=\bigl\{z\in H^2(\Omega)\ |\ \partial_{\nu}z = 0 \quad 
   \mbox{a.e.\ on}\ \partial\Omega\bigr\}.
   $$
 The notation $V^{*}$ denotes the dual space of $V$ with 
 duality pairing $\langle\cdot, \cdot\rangle_{V^*, V}$. 
 Moreover, in this paper, a bijective mapping $F : V \to V^{*}$ and 
 the inner product in $V^{*}$ are defined as 
    \begin{align}
    &\langle Fv_{1}, v_{2} \rangle_{V^*, V} := 
    (v_{1}, v_{2})_{V} \quad \mbox{for all}\ v_{1}, v_{2}\in V, 
    \label{defF}
    \\[0mm]
    &(v_{1}^{*}, v_{2}^{*})_{V^{*}} := 
    \left\langle v_{1}^{*}, F^{-1}v_{2}^{*} 
    \right\rangle_{V^*, V} 
    \quad \mbox{for all}\ v_{1}^{*}, v_{2}^{*}\in V^{*};
    \label{innerVstar}
    \end{align}
 note that $F : V \to V^{*}$ is well-defined by 
 the Riesz representation theorem.  

This article defines weak solutions of \eqref{P} and \ref{Pep} as follows. 
%
%
%
\begin{df}        
 A pair $(\varphi, \mu)$ with 
    \begin{align*}
    &\varphi \in H^1(0, T; V^{*})\cap L^{\infty}(0, T; H)\cap L^2(0, T; V), 
    \\
    &\mu\in L^2(0, T; V)
    \end{align*}
 is called a {\it weak solution} of \eqref{P} if 
 $(\varphi, \mu)$ 
 satisfies 
    \begin{align}
        &\langle \varphi_{t}(t), v \rangle_{V^{*}, V} 
           + \bigl(\mu(t), v \bigr)_{V} 
          = 0 
          \quad 
          \mbox{for all}\ v \in V\ 
          \mbox{and a.a}.\ t\in(0, T), \label{defsol1}
     \\[0mm]
        &\mu = a(\cdot)\varphi -J\ast\varphi + G'(\varphi)                           
        \quad \mbox{a.e.\ on}\ \Omega\times(0, T), \label{defsol2}
     \\[0mm]
        &\varphi(0)=\varphi_{0} 
        \quad \mbox{a.e.\ on}\ \Omega. \label{defsol3}
     \end{align}
\end{df}

%
%
%
%
\begin{df}        
 A pair $(\varphi_{\ep}, \mu_{\ep})$ with 
    \begin{align*}
    &\varphi_{\ep} \in H^1(0, T; H)\cap L^{\infty}(0, T; V)\cap L^2(0, T; W), 
    \\
    &\mu_{\ep}\in L^2(0, T; V)
    \end{align*}
 is called a {\it weak solution} of \ref{Pep} if 
 $(\varphi_{\ep}, \mu_{\ep})$ 
 satisfies 
    \begin{align}
        &(\partial_t\varphi_{\ep}(t), v)_{H} 
           + (\mu_{\ep}(t), v)_{V} 
          = 0 
          \quad 
          \mbox{for all}\ v \in V\ 
          \mbox{and a.a}.\ t\in(0, T), \label{defsolep1}
     \\[0mm]
        &\mu_{\ep} =  \ep(-\Delta+1)\varphi_{\ep} 
                      + a(\cdot)\varphi_{\ep} -J\ast\varphi_{\ep} + G_{\ep}'(\varphi_{\ep}) 
                             + \ep\partial_t\varphi_{\ep} 
        \quad \mbox{a.e.\ on}\ \Omega\times(0, T), \label{defsolep2}
     \\[0mm]
        &\varphi_{\ep}(0)=\varphi_{0\ep} 
        \quad \mbox{a.e.\ on}\ \Omega. \label{defsolep3}
     \end{align}
\end{df}

This paper has four main theorems. 
The first main result 
gives existence and uniqueness of solutions to \ref{Pep}.  
\begin{thm}\label{maintheorem1}
 Assume {\rm (A1)-(A8)}. 
 Then there exists $\ep_{0}\in(0, 1)$ such that for all $\ep\in(0, \ep_{0})$  
 there exists a unique weak solution $(\varphi_{\ep}, \mu_{\ep})$ of {\rm \ref{Pep}} 
 satisfying                                                 
     \begin{equation*}
        \varphi_{\ep} \in H^1(0, T; H)\cap L^{\infty}(0, T; V)\cap L^2(0, T; W),
         \quad \mu_{\ep} \in L^2(0, T; V) 
     \end{equation*}
 and there exists a constant $M_{1}=M_{1}(T)>0$ 
 such that 
     \begin{align}
     &\|\varphi_{\ep}(t)\|_{H}^2 + \ep\|\varphi_{\ep}(t)\|^2_{V} 
       + \int_{0}^{t}\|\mu_{\ep}(s)\|^2_{V}\,ds  \label{essolep1} \\[-0.5mm] 
      &+ \ep\int_{0}^{t}\|(-\Delta+1)\varphi_{\ep}(s)\|^2_{H}\,ds 
       + \ep\int_{0}^{t}\|\partial_{t}\varphi_{\ep}(s)\|^2_{H}\,ds 
     \leq M_{1},  \notag
     \\[1mm]
     &\int_{0}^{t}\|\varphi_{\ep}(s)\|^2_{V}\,ds 
     \leq M_{1}, \label{essolep2} 
     \\
     &\int_{0}^{t}\|\partial_{t}\varphi_{\ep}(s)\|^2_{V^{*}}\,ds \leq M_{1}, 
     \label{essolep3} 
     \\
     &\int_{0}^{t}\|\beta_{\ep}(\varphi_{\ep}(s))\|_{H}^2\,ds 
     \leq M_{1} \label{essolep4}
     \end{align}
 for all $t\in[0, T]$ and all $\ep\in(0, \ep_{0})$.
 \end{thm}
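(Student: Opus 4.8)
The plan is to construct, for each sufficiently small $\ep$, a unique solution of {\rm\ref{Pep}} by reformulating it as an abstract Cauchy problem in $H$, and then to obtain \eqref{essolep1}--\eqref{essolep4} by testing the two equations of {\rm\ref{Pep}} successively against $\mu_{\ep}$, against $\varphi_{\ep}$, and finally by solving the second equation for $\beta_{\ep}(\varphi_{\ep})$; all constants below depend only on the data in {\rm(A1)--(A8)} and on $T$. For Step~1, testing \eqref{defsolep1} against $v\in V$ gives $\partial_{t}\varphi_{\ep}=-F\mu_{\ep}$ in $V^{*}$, while \eqref{defsolep2} reads $\mu_{\ep}=\ep(-\Delta+1)\varphi_{\ep}+N(\varphi_{\ep})+\ep\partial_{t}\varphi_{\ep}$ with $N(\psi):=a(\cdot)\psi-J\ast\psi+\beta_{\ep}(\psi)+\pi(\psi)$; eliminating $\mu_{\ep}$ and setting $B:=F^{-1}+\ep I$ — a bounded self-adjoint operator on $H$ with $B\geq\ep I$ commuting with $F$ — problem {\rm\ref{Pep}} becomes
$$
\partial_{t}\varphi_{\ep}+\ep B^{-1}F\varphi_{\ep}+B^{-1}N(\varphi_{\ep})=0\quad\text{in }H,\qquad \varphi_{\ep}(0)=\varphi_{0\ep},
$$
together with $\mu_{\ep}=-F^{-1}\partial_{t}\varphi_{\ep}\in W$. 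Here $-\ep B^{-1}F$ is minus a nonnegative self-adjoint operator with domain $W$ and $D((B^{-1}F)^{1/2})=V$, hence generates an analytic semigroup on $H$, and $B^{-1}N:H\to H$ is globally Lipschitz because $\beta_{\ep}$ (the Yosida approximation, Remark~\ref{MYreg}) and $\pi$ are Lipschitz and $a(\cdot),\ J\ast\cdot$ are bounded on $H$; so a Banach fixed-point argument in $C([0,T_{0}];H)$ yields a unique local solution, which is strong ($\varphi_{\ep}\in H^{1}(0,T_{0};H)\cap C([0,T_{0}];V)\cap L^{2}(0,T_{0};W)$) since $\varphi_{0\ep}\in V$, uniqueness on $[0,T]$ follows from the same Lipschitz bound and Gronwall, and the a priori estimates below — being independent of the existence time — let the solution extend to all of $[0,T]$.

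For Step~2 (energy identity), take $v=\mu_{\ep}(t)$ in \eqref{defsolep1} and insert \eqref{defsolep2}; using $\partial_{\nu}\varphi_{\ep}=0$, the symmetry $J(x)=J(-x)$, and the chain rule for $\widehat{\beta_{\ep}},\widehat{\pi}\in C^{1}$, one obtains
$$
\frac{d}{dt}\mathcal{E}_{\ep}(\varphi_{\ep})+\ep\|\partial_{t}\varphi_{\ep}\|_{H}^{2}+\|\mu_{\ep}\|_{V}^{2}=0,\quad \mathcal{E}_{\ep}(\psi):=\tfrac{\ep}{2}\|\psi\|_{V}^{2}+\tfrac{1}{2}\into a\psi^{2}\,dx-\tfrac{1}{2}\into\into J(x-y)\psi(x)\psi(y)\,dxdy+\into\widehat{\beta_{\ep}}(\psi)\,dx+\into\widehat{\pi}(\psi)\,dx.
$$
By {\rm(A7)}, {\rm(A8)} and $0\leq\widehat{\beta_{\ep}}\leq\widehat{\beta}$, the number $\mathcal{E}_{\ep}(\varphi_{0\ep})$ is bounded independently of $\ep$ and $T$. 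However $\mathcal{E}_{\ep}$ is \emph{not} coercive on $H$: using $\inf_{\Omega}a\geq c_{0}+\|\pi'\|_{L^{\infty}(\mathbb{R})}$, $\|J\|_{L^{1}(\RN)}<c_{0}+c_{1}$ and $\widehat{\pi}(r)\geq-\tfrac{1}{2}\|\pi'\|_{L^{\infty}(\mathbb{R})}r^{2}$ one only gets $\mathcal{E}_{\ep}(\psi)\geq\tfrac{\ep}{2}\|\psi\|_{V}^{2}-\tfrac{c_{1}}{2}\|\psi\|_{H}^{2}$, so this step alone yields merely $\int_{0}^{t}\|\mu_{\ep}\|_{V}^{2}\,ds+\ep\int_{0}^{t}\|\partial_{t}\varphi_{\ep}\|_{H}^{2}\,ds\leq \mathcal{E}_{\ep}(\varphi_{0\ep})+\tfrac{c_{1}}{2}\|\varphi_{\ep}(t)\|_{H}^{2}$, which must be closed with an independent bound for $\|\varphi_{\ep}(t)\|_{H}$.

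Step~3 supplies that bound and is the decisive step (it replaces the Poincar\'e--Wirtinger argument available on bounded domains): take $v=\varphi_{\ep}(t)$ in \eqref{defsolep1}, so that, integrating by parts with $\partial_{\nu}\varphi_{\ep}=0$,
$$
\tfrac{1}{2}\frac{d}{dt}\|\varphi_{\ep}\|_{H}^{2}+\bigl(\mu_{\ep},(-\Delta+1)\varphi_{\ep}\bigr)_{H}=0,
$$
and substitute \eqref{defsolep2} into the second term. The contribution $(\beta_{\ep}(\varphi_{\ep}),(-\Delta+1)\varphi_{\ep})_{H}=\into\beta_{\ep}'(\varphi_{\ep})|\nabla\varphi_{\ep}|^{2}\,dx+\into\beta_{\ep}(\varphi_{\ep})\varphi_{\ep}\,dx\geq0$ drops out; the $a$- and $J$-terms are controlled via $\|\nabla a\|_{L^{\infty}(\RN)}\leq\|\nabla J\|_{L^{1}(\RN)}$ and $\nabla(J\ast\varphi_{\ep})=(\nabla J)\ast\varphi_{\ep}$; and the crucial cancellation is produced by $\inf_{\Omega}a\geq c_{0}+\|\pi'\|_{L^{\infty}(\mathbb{R})}$, which creates a coercive term $-c_{0}\|\nabla\varphi_{\ep}\|_{H}^{2}$ absorbing $+\|\pi'\|_{L^{\infty}(\mathbb{R})}\|\nabla\varphi_{\ep}\|_{H}^{2}$. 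After a Young inequality this yields
$$
\frac{d}{dt}\bigl(\|\varphi_{\ep}\|_{H}^{2}+\ep\|\varphi_{\ep}\|_{V}^{2}\bigr)+c_{0}\|\nabla\varphi_{\ep}\|_{H}^{2}+2\ep\|(-\Delta+1)\varphi_{\ep}\|_{H}^{2}\leq C\bigl(\|\varphi_{\ep}\|_{H}^{2}+\ep\|\varphi_{\ep}\|_{V}^{2}\bigr)
$$
with $C$ independent of $\ep$. Since $\|\varphi_{0\ep}\|_{H}^{2}+\ep\|\varphi_{0\ep}\|_{V}^{2}\leq 3c_{2}$ for $\ep\in(0,1)$ by {\rm(A8)}, Gronwall gives $\|\varphi_{\ep}(t)\|_{H}^{2}+\ep\|\varphi_{\ep}(t)\|_{V}^{2}\leq M_{1}(T)$, and integrating the inequality gives $\int_{0}^{t}(\|\nabla\varphi_{\ep}\|_{H}^{2}+\ep\|(-\Delta+1)\varphi_{\ep}\|_{H}^{2})\,ds\leq M_{1}$; combined with $\int_{0}^{t}\|\varphi_{\ep}\|_{H}^{2}\,ds\leq TM_{1}$ this is \eqref{essolep2}, and feeding $\|\varphi_{\ep}(t)\|_{H}$ back into Step~2 completes \eqref{essolep1}. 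Finally \eqref{essolep3} is immediate from $\partial_{t}\varphi_{\ep}=-F\mu_{\ep}$ and $\|Fw\|_{V^{*}}=\|w\|_{V}$, and \eqref{essolep4} follows by solving \eqref{defsolep2} for $\beta_{\ep}(\varphi_{\ep})$ and using the bounds on $\mu_{\ep}$, $\varphi_{\ep}$, $\ep(-\Delta+1)\varphi_{\ep}$ (contributing $\ep^{2}\int_{0}^{t}\|(-\Delta+1)\varphi_{\ep}\|_{H}^{2}\,ds\leq\ep M_{1}$) and $\ep\partial_{t}\varphi_{\ep}$.

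The only genuinely delicate point is Step~3: on an unbounded domain there is no Poincar\'e--Wirtinger inequality and the Cahn--Hilliard energy is not coercive in $H$, so the classical energy estimate is not self-contained; the remedy is to exploit the strict positivity of the zeroth-order coefficient $a(\cdot)$ together with the $+1$ in $-\Delta+1$ to run a Gronwall estimate with $\ep$-independent constants, which is precisely why {\rm(A7)} is assumed in the stronger form $\inf_{\Omega}a\geq c_{0}+\|\pi'\|_{L^{\infty}(\mathbb{R})}$ rather than merely guaranteeing the maximal-monotone splitting of the potential. The well-posedness in Step~1 instead causes no trouble despite the absence of compactness, because the Moreau--Yosida regularization makes the nonlinearity globally Lipschitz.
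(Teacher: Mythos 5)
Your proposal is correct, but the existence part takes a genuinely different route from the paper. The paper does not solve \ref{Pep} directly: it introduces a second regularization \ref{Peplam}, replacing $-\Delta$ by its Yosida approximation $(-\Delta)_{\lambda}$ and adding a viscous term $\lambda\partial_t\mu_{\ep,\lambda}$, solves the resulting globally Lipschitz ODE in $H\times H$ by Cauchy--Lipschitz--Picard (Lemma \ref{existencePeplam}), derives $\lambda$-uniform though $\ep$-dependent estimates (Lemma \ref{estimatePeplam}), and then passes to the limit $\lambda\searrow 0$; since the domain is unbounded, that limit passage cannot invoke Aubin--Lions globally and is instead carried out on arbitrary bounded subdomains via the localization Lemmas \ref{keylemma} and \ref{keylemma2} built on \eqref{tool2}--\eqref{tool4}, which is the technical heart of Sections \ref{Sec3}--\ref{Sec4}. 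You instead observe that the viscosity $\ep\partial_t\varphi_{\ep}$ already present in \ref{Pep} makes $B=F^{-1}+\ep I$ boundedly invertible on $H$, so that \ref{Pep} is equivalent to the semilinear parabolic equation $\partial_t\varphi_{\ep}+\ep B^{-1}F\varphi_{\ep}+B^{-1}N(\varphi_{\ep})=0$, where $\ep B^{-1}F=F-B^{-1}$ is a bounded self-adjoint perturbation of $F$ (hence nonnegative self-adjoint with domain $W$ and $D((\ep B^{-1}F)^{1/2})=V$) and the nonlinearity is globally Lipschitz on $H$; this dispenses with the $\lambda$-layer and the compactness lemmas altogether, at the price of invoking maximal regularity for analytic semigroups to obtain $\varphi_{\ep}\in H^1(0,T;H)\cap C([0,T];V)\cap L^2(0,T;W)$ from $\varphi_{0\ep}\in V$. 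For the uniqueness claim you should add the one-line check that \emph{every} weak solution in the sense of the paper's definition satisfies your abstract ODE (from \eqref{defsolep1} one has $\partial_t\varphi_{\ep}+F\mu_{\ep}=0$ in $V^{*}$, and $\partial_t\varphi_{\ep}\in H$ forces $\mu_{\ep}=-F^{-1}\partial_t\varphi_{\ep}\in W$), so that uniqueness of mild solutions really covers all weak solutions; the paper instead proves uniqueness by a direct $V^{*}$-Gronwall estimate on the difference of two solutions, as in \eqref{hanpen1}--\eqref{hanpen4}. Your a priori estimates are essentially the paper's \eqref{shun1}--\eqref{shun10}, only combined differently: you run the $\varphi_{\ep}$-test as a self-contained Gronwall inequality for $\|\varphi_{\ep}(t)\|_{H}^2+\ep\|\varphi_{\ep}(t)\|_{V}^2$ (using only $\inf_{\Omega}a\geq c_{0}+\|\pi'\|_{L^{\infty}(\mathbb{R})}$) and then feed the resulting $H$-bound into the energy identity, whereas the paper adds $c_{1}$ times the $\varphi_{\ep}$-test to the energy identity and uses $\|J\|_{L^1(\RN)}<c_{0}+c_{1}$ in \eqref{shun6} to make the combined functional coercive; both close, and your diagnosis that the loss of the Poincar\'e--Wirtinger inequality is compensated by the strict positivity of $a(\cdot)$ together with the $+1$ in $-\Delta+1$ is exactly the mechanism the paper exploits.
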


The second main result says existence and uniqueness 
of solutions to \eqref{P}.   
%
%
\begin{thm}\label{maintheorem2}
 Assume {\rm (A1)-(A8)}. 
 Then there exists a unique weak solution $(\varphi, \mu)$ of {\rm \eqref{P}} 
 satisfying                                                 
     \begin{equation*}
        \varphi \in H^1(0, T; V^{*})\cap L^{\infty}(0, T; H)\cap L^2(0, T; V),
         \quad \mu \in L^2(0, T; V) 
     \end{equation*}
 and there exists a constant $M_{2}=M_{2}(T)>0$ 
 such that 
     \begin{align}
     &\|\varphi(t)\|_{H}^2 
       + \int_{0}^{t}\|\mu(s)\|^2_{V}\,ds  
     \leq M_{2},  \label{es1} 
     \\[0mm]
     &\int_{0}^{t}\|\varphi(s)\|^2_{V}\,ds 
     \leq M_{2}, \label{es2} 
     \\
     &\int_{0}^{t}\|\varphi_{t}(s)\|^2_{V^{*}}\,ds \leq M_{2}, \label{es3} 
     \\
     &\int_{0}^{t}\|\beta(\varphi(s))\|_{V}^2\,ds 
     \leq M_{2} \label{es4}
     \end{align}
 for all $t\in[0, T]$.
 \end{thm}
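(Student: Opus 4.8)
The plan is to obtain $(\varphi,\mu)$ as the limit of the approximating solutions $(\varphi_\ep,\mu_\ep)$ from Theorem~\ref{maintheorem1} as $\ep\searrow0$, so the heart of the argument is a compactness/passage-to-the-limit procedure built on the $\ep$-uniform bounds \eqref{essolep1}--\eqref{essolep4}. First I would record the consequences of those bounds: $(\varphi_\ep)$ is bounded in $H^1(0,T;V^*)\cap L^\infty(0,T;H)\cap L^2(0,T;V)$, $(\mu_\ep)$ is bounded in $L^2(0,T;V)$, $\ep\varphi_\ep\to0$ in $L^2(0,T;W)$, $\ep\,\pa_t\varphi_\ep\to0$ in $L^2(0,T;H)$, and $(\beta_\ep(\varphi_\ep))$ is bounded in $L^2(0,T;H)$. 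Hence, along a subsequence, $\varphi_\ep\rightharpoonup\varphi$ weakly in $H^1(0,T;V^*)\cap L^2(0,T;V)$ and weakly-$*$ in $L^\infty(0,T;H)$, and $\mu_\ep\rightharpoonup\mu$ weakly in $L^2(0,T;V)$, while $\ep(-\Delta+1)\varphi_\ep$ and $\ep\,\pa_t\varphi_\ep$ vanish strongly.

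The next step is strong convergence of $\varphi_\ep$. Since $\Omega$ is unbounded the embedding $V\hookrightarrow H$ is not compact, so a plain Aubin--Lions argument fails; this is exactly the obstacle the introduction flags. I would circumvent it by a cut-off/exhaustion argument: on each bounded piece $\Omega_R:=\Omega\cap B(0,R)$ the embedding $H^1(\Omega_R)\hookrightarrow L^2(\Omega_R)$ is compact, so Aubin--Lions with $V\hookrightarrow H(\Omega_R)\hookrightarrow V^*$ (using the $H^1(0,T;V^*)\cap L^2(0,T;V)$ bounds) gives $\varphi_\ep\to\varphi$ strongly in $L^2(0,T;L^2(\Omega_R))$, and a.e.\ on $\Omega_R\times(0,T)$, for every $R$; a diagonal extraction upgrades this to a.e.\ convergence on $\Omega\times(0,T)$. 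To pass from local to global strong $L^2$ convergence I would use the tightness furnished by the potential term: assumption (A5) together with the energy bound $\|G(\varphi_\ep(t))\|_{L^1(\Omega)}\le M_1$ controls the mass of $\varphi_\ep$ outside $B(0,R)$ uniformly in $\ep$ and $t$ (via $\frac{c_0}{2}r^2\le G(r)+\|\pi'\|_\infty r^2/2$ type coercivity coming from (A7)), so by Vitali's theorem $\varphi_\ep\to\varphi$ strongly in $L^2(0,T;H)$.

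With strong $L^2$ convergence in hand (and a.e.\ convergence), I can identify the nonlinear terms. The convolution term is harmless: $J\ast\varphi_\ep\to J\ast\varphi$ in $L^2(0,T;H)$ by Young's inequality (A1); likewise $a(\cdot)\varphi_\ep\to a(\cdot)\varphi$ and $\pi(\varphi_\ep)\to\pi(\varphi)$ by Lipschitz continuity (A4). For the maximal monotone part, $\beta_\ep(\varphi_\ep)$ is bounded in $L^2(0,T;H)$, hence converges weakly to some $\xi$; the standard argument $\beta_\ep=\beta\circ J_\ep^\beta$, $J_\ep^\beta(\varphi_\ep)\to\varphi$ in $L^2(0,T;H)$ (using $|J_\ep^\beta r-r|\le\ep|\beta_\ep(r)|$ and the bound on $\beta_\ep(\varphi_\ep)$), together with the demiclosedness of the maximal monotone operator induced by $\beta$ on $L^2(0,T;H)$, yields $\xi=\beta(\varphi)$ a.e. Then I pass to the limit in the weak formulations: in \eqref{defsolep1} the term $(\pa_t\varphi_\ep,v)_H=\langle\pa_t\varphi_\ep,v\rangle_{V^*,V}\to\langle\varphi_t,v\rangle_{V^*,V}$ and $(\mu_\ep,v)_V\to(\mu,v)_V$, giving \eqref{defsol1}; in \eqref{defsolep2} the two $\ep$-terms drop and the rest converges to give $\mu=a(\cdot)\varphi-J\ast\varphi+G'(\varphi)$ a.e., i.e.\ \eqref{defsol2}; and $\varphi_\ep(0)=\varphi_{0\ep}\to\varphi_0$ in $H$ with $\varphi_\ep\rightharpoonup\varphi$ in $C([0,T];V^*)$ (from $H^1(0,T;V^*)$) gives $\varphi(0)=\varphi_0$, i.e.\ \eqref{defsol3}. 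The energy estimates \eqref{es1}--\eqref{es3} follow from \eqref{essolep1}--\eqref{essolep3} by weak/weak-$*$ lower semicontinuity of the norms (the $\ep$-terms only help), and \eqref{es4} follows from \eqref{essolep4} once one also controls $\nabla\beta(\varphi)$: since $\varphi\in L^2(0,T;V)$, $\beta$ is monotone and (after the regularity obtained) $\nabla\beta(\varphi)=\beta'(\varphi)\nabla\varphi$ can be estimated by testing \eqref{defsol2} appropriately, or by passing to the limit in the corresponding $\ep$-level gradient estimate; I would set $M_2$ accordingly.

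Finally, uniqueness. Given two weak solutions $(\varphi_i,\mu_i)$, $i=1,2$, set $\varphi:=\varphi_1-\varphi_2$, $\mu:=\mu_1-\mu_2$. Testing the difference of \eqref{defsol1} by $F^{-1}\varphi(t)\in V$ gives $\frac12\frac{d}{dt}\|\varphi(t)\|_{V^*}^2+(\mu(t),F^{-1}\varphi(t))_V=0$, and by \eqref{defsol2}, $(\mu(t),F^{-1}\varphi(t))_V=\langle a(\cdot)\varphi-J\ast\varphi+\beta(\varphi_1)-\beta(\varphi_2)+\pi(\varphi_1)-\pi(\varphi_2),\varphi\rangle_{V^*,V}$. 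Wait --- more precisely, since $\mu\in L^2(0,T;V)$ one has $(\mu,F^{-1}\varphi)_V=\langle\varphi,\mu\rangle_{V^*,V}$ by definition of $F$, so the cross term equals $\int_\Omega(a(\cdot)\varphi-J\ast\varphi+\beta(\varphi_1)-\beta(\varphi_2)+\pi(\varphi_1)-\pi(\varphi_2))\varphi\,dx$. Monotonicity of $\beta$ makes the $\beta$-contribution nonnegative; the term $\int a|\varphi|^2\ge c_0\|\varphi\|_H^2$ by (A7); the convolution and $\pi$ terms are bounded by $(\|J\|_{L^1}+\|\pi'\|_\infty)\|\varphi\|_H^2$; combining with (A7) (which gives $a-\|\pi'\|_\infty\ge c_0$ and $\|J\|_{L^1}<c_0+c_1$ with $c_1<c_0/2$) leaves a term $\ge\frac12\frac{d}{dt}\|\varphi\|_{V^*}^2+\delta\|\varphi\|_H^2$ for some $\delta>0$, up to absorbing the convolution term via Young's inequality between the $H$- and $V^*$-norms; a Gronwall argument on $\|\varphi(t)\|_{V^*}^2$ then forces $\varphi\equiv0$, whence $\mu\equiv0$ from \eqref{defsol2}. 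I expect the main obstacle throughout to be the strong $L^2(0,T;H)$ convergence on the unbounded domain --- the combination of the local Aubin--Lions argument with the coercivity/tightness from (A5) and (A7) is the crux and must be carried out carefully; everything else is by now standard.
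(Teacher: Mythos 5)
Your overall architecture (pass to the limit in \ref{Pep} using the $\ep$-uniform bounds of Theorem \ref{maintheorem1}) matches the paper's, and your uniqueness argument is essentially the one the paper uses. But the step you yourself single out as the crux --- upgrading the local Aubin--Lions compactness on $\Omega\cap B(0,R)$ to strong convergence in $L^2(0,T;H)$ on all of the unbounded $\Omega$ --- contains a genuine gap. You invoke a coercivity ``$\frac{c_0}{2}r^2\le G(r)+\frac{\|\pi'\|_{L^{\infty}(\mathbb{R})}}{2}r^2$'' to control the mass of $\varphi_\ep$ outside $B(0,R)$, but no such inequality is assumed: (A5) only states that $G(r)+\frac{\|\pi'\|_{L^{\infty}(\mathbb{R})}}{2}r^2\ge 0$, and (A7) concerns $a$ and $J$, not $G$ (for the model potential $G(r)=r^4-2r^2$ with $c_0=16$ the claimed pointwise inequality fails near $r=\pm1$). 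More fundamentally, even a genuine coercive bound $G(r)\ge cr^2$ would only reproduce the $L^{\infty}(0,T;H)$ bound you already have; a uniform bound on a translation-invariant functional never yields tightness on an unbounded domain (smooth bumps drifting to spatial infinity satisfy every available $\ep$-uniform estimate while carrying their $L^2$ mass out of every $B(0,R)$). So the Vitali step does not go through, and with it the global strong $L^2(0,T;H)$ convergence that you need both for the $\limsup$ inequality identifying $\xi=\beta(\varphi)$ and, as written, for passing to the limit in the Lipschitz terms.

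This obstruction is precisely what the paper is built to avoid: instead of compactness it proves a Cauchy criterion (Lemma \ref{Cauchy}), a quantitative contraction-type estimate between two approximate solutions obtained by testing the difference of the equations with $F^{-1}(\varphi_\ep-\varphi_\gamma)$ and exploiting the monotonicity of $\beta$ together with (A7); this yields
$\|\varphi_\ep-\varphi_\gamma\|^2_{C([0,T];V^*)}+\int_0^T\|\varphi_\ep(t)-\varphi_\gamma(t)\|_H^2\,dt\le C(\ep^{1/2}+\gamma^{1/2})+C\|\varphi_{0\ep}-\varphi_{0\gamma}\|^2_{V^*}$,
hence strong convergence in $C([0,T];V^*)\cap L^2(0,T;H)$ with no compactness at all. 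Once that is in hand, the rest of your limit passage (weak convergences, identification of $\beta$ by maximal monotonicity, lower semicontinuity for \eqref{es1}--\eqref{es3}) goes through as in the paper; for \eqref{es4} the paper simply reads the $V$-bound off the identity $\beta(\varphi)=\mu-a(\cdot)\varphi+J\ast\varphi-\pi(\varphi)$ term by term, which is cleaner than your proposal to differentiate $\beta(\varphi)$. If you insist on a compactness-based route you would need an extra mechanism preventing escape of mass to infinity, which the stated hypotheses do not provide.
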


The third main result 
is concerned with the energy estimate for \eqref{P}.  
%
%
\begin{thm}\label{maintheorem3} 
Assume {\rm (A1)-(A8)}. 
Let $(\varphi, \mu)$ be a weak solution of {\rm \eqref{P}}. 
Then   
\begin{align*}
E(\varphi(t)) + \int_{0}^{t}\|\mu(s)\|_{V}^2\,ds = E(\varphi_{0}), 
\hspace{1em} \mbox{in particular,} 
\hspace{1em} E(\varphi(t)) \leq E(\varphi_{0})
\end{align*}
for all $t\in[0, T]$, 
where 
$$
E(\varphi(t)):=
\frac{1}{4}\int_{\Omega}\int_{\Omega}J(x-y)(\varphi(x, t)-\varphi(y, t))^2\,dxdy 
+ \int_{\Omega}G(\varphi(x, t))\,dx. 
$$
\end{thm}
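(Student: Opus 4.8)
The plan is to prove that $t\mapsto E(\varphi(t))$ is absolutely continuous on $[0,T]$ with
$\frac{d}{dt}E(\varphi(t)) = \langle\varphi_t(t),\mu(t)\rangle_{V^*,V}$ for a.a.\ $t$, and then to combine this with the identity obtained by testing \eqref{defsol1} with $v=\mu(t)$. First I would rewrite the energy in a form adapted to the Gelfand triple $V\hookrightarrow H\hookrightarrow V^*$: using $J(x)=J(-x)$ and Fubini,
$\frac14\int_\Omega\int_\Omega J(x-y)(\varphi(x)-\varphi(y))^2\,dxdy = \tfrac12(\mathcal A\varphi,\varphi)_H$, where $\mathcal A\varphi:=a(\cdot)\varphi-J\ast\varphi$. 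Since $J\in W^{1,1}(\RN)$ forces $a\in W^{1,\infty}(\Omega)$ and $\|J\ast\psi\|_V\le\|J\|_{W^{1,1}(\RN)}\|\psi\|_H$ together with $\|a\psi\|_V\le C\|\psi\|_V$, the operator $\mathcal A$ is bounded and self-adjoint on $H$, maps $V$ boundedly into $V$, and hence extends to a bounded self-adjoint operator on $V^*$ by $\langle\mathcal A v^*,v\rangle_{V^*,V}:=\langle v^*,\mathcal A v\rangle_{V^*,V}$; moreover $(\mathcal A\varphi)_t=\mathcal A\varphi_t$. Thus $E(\varphi(t))=\tfrac12(\mathcal A\varphi(t),\varphi(t))_H+\int_\Omega G(\varphi(t))\,dx$. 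From Definition 1.3, $\varphi\in H^1(0,T;V^*)\cap L^\infty(0,T;H)\cap L^2(0,T;V)$, so $\mathcal A\varphi\in H^1(0,T;V^*)\cap L^2(0,T;V)$ and $\varphi\in C_w([0,T];H)$; from Theorem \ref{maintheorem2}, $\beta(\varphi)\in L^2(0,T;V)$, and since $\pi$ is Lipschitz also $\pi(\varphi)\in L^2(0,T;V)$, hence $G'(\varphi)+\|\pi'\|_{L^\infty(\mathbb R)}\varphi\in L^2(0,T;V)$; finally, by (A8) and $\varphi_0\in H$, $G(\varphi_0)+\tfrac{\|\pi'\|_{L^\infty(\mathbb R)}}{2}\varphi_0^2\in L^1(\Omega)$.

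Next comes the chain rule, carried out term by term. For the quadratic part, $\mathcal A\varphi$ and $\varphi$ both lie in $L^2(0,T;V)$ with time derivatives in $L^2(0,T;V^*)$, so by the Lions--Magenes product rule (polarization of $\frac{d}{dt}\|\cdot\|_H^2=2\langle(\cdot)_t,\cdot\rangle$) the map $t\mapsto\tfrac12(\mathcal A\varphi(t),\varphi(t))_H$ is absolutely continuous and, using $(\mathcal A\varphi)_t=\mathcal A\varphi_t$ and self-adjointness, $\frac{d}{dt}\tfrac12(\mathcal A\varphi(t),\varphi(t))_H=\langle\varphi_t(t),\mathcal A\varphi(t)\rangle_{V^*,V}$ for a.a.\ $t$. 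For the potential part, set $L:=\|\pi'\|_{L^\infty(\mathbb R)}$; by (A3)--(A5) the function $\widehat G(r):=G(r)+\tfrac L2 r^2=\widehat\beta(r)+(\widehat\pi(r)+\tfrac L2 r^2)$ is convex, nonnegative, of class $C^1$ with $\widehat G\,'=G'+L\,\mathrm{id}$. Since $\varphi\in H^1(0,T;V^*)\cap L^2(0,T;V)$, $\widehat G\,'(\varphi)=G'(\varphi)+L\varphi\in L^2(0,T;V)$, and $\widehat G(\varphi_0)\in L^1(\Omega)$, a standard chain-rule lemma for convex functions of $V$-valued Sobolev-in-time functions (cf.\ \cite{Barbu2} and the arguments in \cite{CFG-2012, GG-2014}) shows that $t\mapsto\int_\Omega\widehat G(\varphi(x,t))\,dx$ is absolutely continuous on $[0,T]$ with $\frac{d}{dt}\int_\Omega\widehat G(\varphi(t))\,dx=\langle\varphi_t(t),G'(\varphi(t))+L\varphi(t)\rangle_{V^*,V}$ a.e.; subtracting $\frac{d}{dt}\tfrac L2\|\varphi(t)\|_H^2=L\langle\varphi_t(t),\varphi(t)\rangle_{V^*,V}$ yields $\frac{d}{dt}\int_\Omega G(\varphi(t))\,dx=\langle\varphi_t(t),G'(\varphi(t))\rangle_{V^*,V}$ a.e. Adding the two contributions and invoking \eqref{defsol2}, the map $t\mapsto E(\varphi(t))$ is absolutely continuous and $\frac{d}{dt}E(\varphi(t))=\langle\varphi_t(t),\mathcal A\varphi(t)+G'(\varphi(t))\rangle_{V^*,V}=\langle\varphi_t(t),\mu(t)\rangle_{V^*,V}$ for a.a.\ $t\in(0,T)$.

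Finally I would test \eqref{defsol1} with $v=\mu(t)\in V$ (legitimate for a.a.\ $t$, since $\mu\in L^2(0,T;V)$), obtaining $\langle\varphi_t(t),\mu(t)\rangle_{V^*,V}+\|\mu(t)\|_V^2=0$, hence $\frac{d}{dt}E(\varphi(t))=-\|\mu(t)\|_V^2$ for a.a.\ $t$; integrating over $(0,t)$ and using $\varphi(0)=\varphi_0$ gives $E(\varphi(t))+\int_0^t\|\mu(s)\|_V^2\,ds=E(\varphi_0)$ for all $t\in[0,T]$, and the inequality $E(\varphi(t))\le E(\varphi_0)$ is immediate. The main obstacle is the convex chain rule for the potential: because $\widehat\beta$ may grow faster than quadratically (for the model potential $G(r)=r^4-2r^2$ it is essentially quartic), one cannot control $\widehat\beta(\varphi)$ by $\varphi^2$, so the absolute continuity of $t\mapsto\int_\Omega\widehat\beta(\varphi(t))\,dx$ must genuinely exploit the convexity inequality $\widehat\beta(b)-\widehat\beta(a)\le\beta(b)(b-a)$ together with the extra regularity $\beta(\varphi)\in L^2(0,T;V)$ from Theorem \ref{maintheorem2}, typically via a time-mollification or Yosida-regularization argument; one must check that every limit in that argument is valid on the unbounded domain $\Omega$ (which is exactly why I prefer this direct route over passing to the limit in the energy identity for \ref{Pep}, as the latter only delivers the inequality through weak lower semicontinuity).
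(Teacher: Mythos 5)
Your proposal follows essentially the same route as the paper: the paper's proof of Theorem \ref{maintheorem3} simply rewrites $E(\varphi(t))$ as $\frac{1}{2}\|\sqrt{a(\cdot)}\varphi(t)\|_{H}^2-\frac{1}{2}(\varphi(t),J\ast\varphi(t))_{H}+\int_{\Omega}G(\varphi(t))$, differentiates in time to obtain $\langle\varphi_t(t),\mu(t)\rangle_{V^*,V}$ via \eqref{defsol2}, and tests \eqref{defsol1} with $v=\mu(t)$ before integrating. If anything, your write-up is more careful than the paper's four-line computation, since you supply the chain-rule justifications (the Lions--Magenes product rule for the quadratic part and the convex chain rule exploiting $\beta(\varphi)\in L^2(0,T;V)$ for the potential part) that the paper takes for granted.
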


The fourth main result infers the error estimate 
between the solution of \eqref{P} and the solution of \ref{Pep}.  
%
%
\begin{thm}\label{maintheorem4}
Assume {\rm (A1)-(A8)}. 
In {\rm (A8)} assume further that 
$$
\|\varphi_{0\ep}-\varphi_0\|_{V^{*}}^2 \leq c_3\ep^{1/2}
$$
for some constant $c_3>0$. Let $\ep_{0}$ be as in Theorem \ref{maintheorem1}. 
For $\ep \in (0, \ep_{0})$, let $(\varphi_{\ep}, \mu_{\ep})$ and $(\varphi, \mu)$ 
be weak solutions of {\rm \ref{Pep}} and {\rm \eqref{P}}, respectively. 
Then there exists a constant $M_3=M_3(T)>0$ such that 
\begin{align*}
\|\varphi_{\ep}-\varphi\|_{C([0, T]; V^{*})}^2 
+ \int_{0}^{T} \|\varphi_{\ep}(t)-\varphi(t)\|_{H}^2\,dt 
\leq M_3\ep^{1/2}
\end{align*}
for all $\ep\in(0, \ep_{0})$.
\end{thm}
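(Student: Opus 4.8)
\smallskip

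The plan is to subtract \ref{Pep} from \eqref{P}, test the resulting identity with the $V^{*}$–Riesz image of the difference, and close by Gronwall's lemma, using the uniform bounds of Theorems~\ref{maintheorem1}--\ref{maintheorem2} to keep every $\ep$–dependent remainder $O(\ep^{1/2})$. Write $z:=\varphi_{\ep}-\varphi$; since $\varphi_{\ep}\in H^{1}(0,T;H)$ and $\varphi\in H^{1}(0,T;V^{*})\cap L^{2}(0,T;V)$, we have $z\in H^{1}(0,T;V^{*})\cap L^{2}(0,T;V)\hookrightarrow C([0,T];H)$. Subtracting \eqref{defsol1} from \eqref{defsolep1}, testing with $v=F^{-1}z(t)\in V$, and using \eqref{defF}--\eqref{innerVstar} (so that $\langle z_{t},F^{-1}z\rangle_{V^{*},V}=\tfrac12\tfrac{d}{dt}\|z\|_{V^{*}}^{2}$ and $(\mu_{\ep}-\mu,F^{-1}z)_{V}=(\mu_{\ep}-\mu,z)_{H}$ because $z\in H$), together with \eqref{defsol2}--\eqref{defsolep2} and $G'=\beta+\pi$, $G_{\ep}'=\beta_{\ep}+\pi$, I obtain
\begin{align*}
&\frac12\frac{d}{dt}\|z(t)\|_{V^{*}}^{2}
+\bigl(a(\cdot)z-J*z,z\bigr)_{H}
+\bigl(\beta_{\ep}(\varphi_{\ep})-\beta(\varphi),z\bigr)_{H}
+\bigl(\pi(\varphi_{\ep})-\pi(\varphi),z\bigr)_{H}\\
&\qquad=-\ep\bigl((-\Delta+1)\varphi_{\ep}+\partial_{t}\varphi_{\ep},\,z\bigr)_{H}.
\end{align*}

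Next I would bound the left-hand side from below. By (A7), $\bigl(a(\cdot)z,z\bigr)_{H}\ge(c_{0}+\|\pi'\|_{L^{\infty}(\mathbb{R})})\|z\|_{H}^{2}$, and by (A4), $\bigl(\pi(\varphi_{\ep})-\pi(\varphi),z\bigr)_{H}\ge-\|\pi'\|_{L^{\infty}(\mathbb{R})}\|z\|_{H}^{2}$. Monotonicity of $\beta_{\ep}$ gives $\bigl(\beta_{\ep}(\varphi_{\ep})-\beta_{\ep}(\varphi),z\bigr)_{H}\ge0$, while the resolvent identity of Remark~\ref{MYreg} together with monotonicity of $\beta$ yields the Moreau--Yosida estimate $\bigl(\beta_{\ep}(\varphi_{\ep})-\beta(\varphi),z\bigr)_{H}\ge\frac{\ep}{2}\|\beta_{\ep}(\varphi_{\ep})\|_{H}^{2}-\frac{\ep}{2}\|\beta(\varphi)\|_{H}^{2}\ge-\frac{\ep}{2}\|\beta(\varphi)\|_{H}^{2}$. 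The crucial point is the convolution term: instead of the crude bound by $\|J\|_{L^{1}(\RN)}\|z\|_{H}^{2}$, I exploit that $J\in W^{1,1}(\RN)$ (condition (A1)); extending $z$ by zero to $\RN$ shows $J*z\in V$ with $\|J*z\|_{V}\le\|J\|_{W^{1,1}(\RN)}\|z\|_{H}$, hence
\begin{equation*}
|(J*z,z)_{H}|=|\langle z,J*z\rangle_{V^{*},V}|
\le\|J\|_{W^{1,1}(\RN)}\|z\|_{V^{*}}\|z\|_{H}
\le\frac{c_{0}}{2}\|z\|_{H}^{2}+\frac{\|J\|_{W^{1,1}(\RN)}^{2}}{2c_{0}}\|z\|_{V^{*}}^{2}.
\end{equation*}
Bounding the right-hand side by $\frac{c_{0}}{4}\|z\|_{H}^{2}+\frac{\ep^{2}}{c_{0}}\|(-\Delta+1)\varphi_{\ep}+\partial_{t}\varphi_{\ep}\|_{H}^{2}$ and collecting terms (the two $\|\pi'\|_{L^{\infty}(\mathbb{R})}\|z\|_{H}^{2}$ cancel using $\inf_{\Omega}a\ge c_{0}+\|\pi'\|_{L^{\infty}(\mathbb{R})}$) leaves
\begin{equation*}
\frac12\frac{d}{dt}\|z(t)\|_{V^{*}}^{2}+\frac{c_{0}}{4}\|z(t)\|_{H}^{2}
\le\frac{\|J\|_{W^{1,1}(\RN)}^{2}}{2c_{0}}\|z(t)\|_{V^{*}}^{2}
+\frac{\ep}{2}\|\beta(\varphi(t))\|_{H}^{2}
+\frac{\ep^{2}}{c_{0}}\bigl\|(-\Delta+1)\varphi_{\ep}(t)+\partial_{t}\varphi_{\ep}(t)\bigr\|_{H}^{2}.
\end{equation*}

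Integrating on $(0,t)$ and using $\|z(0)\|_{V^{*}}^{2}=\|\varphi_{0\ep}-\varphi_{0}\|_{V^{*}}^{2}\le c_{3}\ep^{1/2}$, the bound $\int_{0}^{T}\|\beta(\varphi)\|_{H}^{2}\le\int_{0}^{T}\|\beta(\varphi)\|_{V}^{2}\le M_{2}$ from \eqref{es4}, and $\ep\int_{0}^{T}(\|(-\Delta+1)\varphi_{\ep}\|_{H}^{2}+\|\partial_{t}\varphi_{\ep}\|_{H}^{2})\le 2M_{1}$ from \eqref{essolep1}, I get, since $\ep\in(0,1)$,
\begin{equation*}
\tfrac12\|z(t)\|_{V^{*}}^{2}+\tfrac{c_{0}}{4}\int_{0}^{t}\|z(s)\|_{H}^{2}\,ds
\le C_{*}\ep^{1/2}+\frac{\|J\|_{W^{1,1}(\RN)}^{2}}{2c_{0}}\int_{0}^{t}\|z(s)\|_{V^{*}}^{2}\,ds
\end{equation*}
for some $C_{*}=C_{*}(T)>0$. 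Gronwall's lemma (discarding the nonnegative $H$–integral) yields $\|z(t)\|_{V^{*}}^{2}\le 2C_{*}\,e^{\|J\|_{W^{1,1}(\RN)}^{2}T/c_{0}}\ep^{1/2}$ for all $t\in[0,T]$, and reinserting this into the displayed inequality with $t=T$ controls $\int_{0}^{T}\|z\|_{H}^{2}$ by $C\ep^{1/2}$ as well; adding the two gives the assertion with a suitable $M_{3}=M_{3}(T)$.

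I expect the main obstacle to be exactly the coercivity step: since $z\mapsto a(\cdot)z-J*z$ is merely nonnegative (not $H$–coercive) and $\pi$ spoils monotonicity, a naive estimate only gives $\frac{d}{dt}\|z\|_{V^{*}}^{2}\lesssim\|z\|_{H}^{2}+\cdots$, which cannot be closed. Trading the $W^{1,1}$–regularity of $J$ to bound $(J*z,z)_{H}$ by $\|z\|_{V^{*}}\|z\|_{H}$ rather than $\|z\|_{H}^{2}$ converts that deficit into a Gronwall-absorbable $\|z\|_{V^{*}}^{2}$–term, so that the full strength of $\inf_{\Omega}a\ge c_{0}+\|\pi'\|_{L^{\infty}(\mathbb{R})}$ remains to dominate $\|z\|_{H}^{2}$; getting this balance right while simultaneously keeping all remainders $O(\ep^{1/2})$ — which is what forces the $\ep$–weighted bounds of \eqref{essolep1} and the extra hypothesis $\|\varphi_{0\ep}-\varphi_{0}\|_{V^{*}}^{2}\le c_{3}\ep^{1/2}$ — is the heart of the argument, the $\beta_{\ep}$-versus-$\beta$ discrepancy being the only other delicate point, dispatched by Remark~\ref{MYreg}.
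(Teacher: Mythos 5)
Your proof is correct, but it takes a different (more self-contained) route than the paper. The paper obtains Theorem \ref{maintheorem4} in one line from Lemma \ref{Cauchy}: in the two-parameter Cauchy estimate \eqref{Cauchycriterion} one fixes $\ep$, lets $\gamma\searrow0$, and uses the strong convergences $\varphi_{\gamma}\to\varphi$ in $C([0,T];V^{*})\cap L^{2}(0,T;H)$ and $\varphi_{0\gamma}\to\varphi_{0}$ in $V^{*}$ established in the proof of Theorem \ref{maintheorem2} (see \eqref{SK}), together with the extra hypothesis $\|\varphi_{0\ep}-\varphi_{0}\|_{V^{*}}^{2}\leq c_{3}\ep^{1/2}$; no new differential inequality is needed. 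You instead run the energy estimate directly on $z=\varphi_{\ep}-\varphi$. The technical core is the same as in Lemma \ref{Cauchy}: testing with $F^{-1}z$, the $W^{1,1}$-bound $\|J\ast z\|_{V}\leq\|J\|_{W^{1,1}(\RN)}\|z\|_{H}$ to convert the convolution term into a Gronwall-absorbable $\|z\|_{V^{*}}^{2}$-term (this is exactly \eqref{pen4}/\eqref{hanpen4}), and the $\ep$-weighted bounds of \eqref{essolep1} to keep the viscous remainders $O(\ep^{1/2})$. The one genuinely different ingredient is your treatment of the nonlinearity: the paper's Lemma \ref{Cauchy} compares $\beta_{\ep}$ with $\beta_{\gamma}$ via the resolvent decomposition \eqref{pen5} and the bound \eqref{essolep4} on $\|\beta_{\ep}(\varphi_{\ep})\|_{L^2(0,T;H)}$, whereas you compare $\beta_{\ep}(\varphi_{\ep})$ with $\beta(\varphi)$ via the Moreau--Yosida inequality $(\beta_{\ep}(u)-\beta(v),u-v)_{H}\geq-\frac{\ep}{2}\|\beta(v)\|_{H}^{2}$, which leans on the regularity $\beta(\varphi)\in L^{2}(0,T;H)$ supplied by \eqref{es4}. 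What your approach buys is independence from the Cauchy-sequence construction of $\varphi$ (it works for any weak solution of \eqref{P} with the stated regularity); what the paper's approach buys is brevity, since all the work was already done in Lemma \ref{Cauchy} and only approximate problems, never the limit equation, need to be manipulated.
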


The strategy for the proof of Theorem \ref{maintheorem1} is to consider 
the approximation of \ref{Peplam}
\begin{equation}\label{Peplam}\tag*{(P)$_{\ep, \lambda}$}
  \begin{cases} 
    \lambda\partial_t\mu_{\ep, \lambda}
    + \partial_t\varphi_{\ep, \lambda}+((-\Delta)_{\lambda}+1)\mu_{\ep, \lambda} = 0
         & \mbox{in}\ \Omega\times(0, T), \\[-0.5mm]
    \mu_{\ep, \lambda} =  \ep((-\Delta)_{\lambda}+1)\varphi_{\ep, \lambda} 
                      + a(\cdot)\varphi_{\ep, \lambda} 
                      -J\ast\varphi_{\ep, \lambda}\\ 
                                    \hspace{62.7mm} +G_{\ep}'(\varphi_{\ep, \lambda}) 
                             + \ep\partial_t\varphi_{\ep, \lambda}
         & \mbox{in}\ \Omega\times(0, T), \\[-0mm]
   \mu_{\ep, \lambda}(\cdot, 0)=\varphi_{0\ep},\  
   \varphi_{\ep, \lambda}(\cdot, 0)=\varphi_{0\ep}                                         
         & \mbox{in}\ \Omega, 
  \end{cases}
\end{equation}
where $\lambda>0$ 
and $(-\Delta)_{\lambda}$ is the Yosida approximation of $-\Delta$, 
to establish existence and estimates of solutions for \ref{Peplam} 
(Lemmas \ref{existencePeplam} and \ref{estimatePeplam}), 
and to pass to the limit in \ref{Peplam} as $\lambda\searrow0$. 
The strategy for the proof of Theorem \ref{maintheorem2} 
is to confirm Cauchy's criterion for solutions of \ref{Pep} (Lemma \ref{Cauchy}) 
and to pass to the limit in \ref{Pep} as $\ep\searrow0$. 

This paper is organized as follows. 
Section \ref{Sec2} presents one example. 
In Section \ref{Sec3} we give useful results for 
proving the main theorems. 
Sections \ref{Sec4} and \ref{Sec5} are devoted to the proofs of 
Theorems \ref{maintheorem1} and \ref{maintheorem2}. 
In Section \ref{Sec6} we establish the energy estimate for \eqref{P} 
stated in Theorem \ref{maintheorem3}. 
Section \ref{Sec7} proves the error estimate 
between the solution of \eqref{P} and the solution of \ref{Pep} 
stated in Theorem \ref{maintheorem4}.

 \section{Example}\label{Sec2}

This paper presents the example: 
\begin{align*}
&\Omega=\RN\setminus B(0, \eta) 
\quad \mbox{($\eta>0$ is sufficiently small)}, 
\\ 
&J(x)=c_{J}e^{-|x|^2} \quad 
\mbox{($c_{J}>0$ is a constant)}, 
\\ 
&G(r)= r^4-2r^2. 
\end{align*}
By letting $c_{J}$ be a positive constant such that $\|J\|_{L^1(\RN)} = 21$ 
and 
putting 
\begin{align*}
&a(x):=\int_{\Omega}J(x-y)\,dy,\  
\widehat{\beta}(r)=r^4,\ \widehat{\pi}(r)=-2r^2, 
\\   
&G_{\ep}(r) = \widehat{\beta_{\ep}}(r) + \widehat{\pi}(r),\  
c_{0}=16,\ 0<c_{1}=6<\frac{c_{0}}{2}, 
\end{align*}
where $\widehat{\beta_{\ep}}$ is the Moreau--Yosida regularization 
of $\widehat{\beta}$, 
these functions $a$, $J$, $G$ and $G_{\ep}$ satisfy (A1)-(A8). 
Indeed, 
\begin{align*}
\|J\|_{L^1(\RN)} = 21 < 16 + 6 = c_{0} + c_{1}
\end{align*}
and 
\begin{align*}
a(x)&=\int_{\Omega}J(x-y)\,dy = \|J\|_{L^1(\RN)} - \int_{B(0, \eta)}J(x-y)\,dy
=21-\int_{x-B(0, \eta)}J(z)\,dz 
\\
&\geq 21 - c_{J}|x-B(0, \eta)| = 21 - c_{J}|B(0, \eta)| 
\\
&\geq 20 = c_{0}+\|\pi'\|_{L^{\infty}(\mathbb{R})}
\end{align*}
hold, which implies (A7). 

It is possible to verify (A8) in reference to \cite[Section 6]{KY2}. 
To confirm (A8) we let  
 $\varphi_{0} \in L^2(\Omega)$ with 
 $G(\varphi_{0}) \in L^1(\Omega)$, 
 i.e., 
 $\varphi_{0} \in L^2(\Omega)\cap L^{4}(\Omega)$. 
 Then there exists $\varphi_{0\ep} \in W\cap Y$ such that 
    \begin{equation*}
       \begin{cases}
         \varphi_{0\ep} + \ep(-\Delta + 1)\varphi_{0\ep} = \varphi_{0}
         \quad \mbox{in}\ \Omega,
         \\[2mm]
         \partial_{\nu}\varphi_{0\ep} = 0 
         \quad \mbox{on}\ \partial\Omega, 
       \end{cases}
    \end{equation*}
 that is, 
 $$\varphi_{0\ep} = (J_{H})_{\ep}\varphi_{0} = (J_{L^{4}})_{\ep}\varphi_{0},$$ 
 where  
 \begin{align*}
 &A_{H}:=-\Delta+I 
 : D(A):=W \subset H \to H, 
 \\[1.5mm]
 &(J_{H})_{\ep}:=(I+\ep A_{H})^{-1}, 
 \\[1.5mm]
 &Y:=\bigl\{z \in W^{2,\,4}(\Omega)\ |\ \partial_{\nu}z = 0 
 \quad \mbox{a.e.\ on}\ \partial\Omega \bigr\}, 
 \\[1.5mm]
 &A_{L^{4}}:=-\Delta+I 
 : D(A_{L^{4}}):=Y \subset L^{4}(\Omega) \to L^{4}(\Omega), 
 \\[1.5mm]
 &(J_{L^{4}})_{\ep}:=(I+\ep A_{L^{4}})^{-1}.  
 \end{align*}
 It follows from the properties of 
 $(J_{H})_{\ep}$ and $(J_{L^{4}})_{\ep}$ that 
  \begin{align*}
  &\varphi_{0\ep} = (J_{H})_{\ep}\varphi_{0} \to \varphi_{0} 
  \quad \mbox{in}\ H\ \mbox{as}\ \ep \searrow 0, 
  \\[1.5mm]
  &\|\varphi_{0\ep}\|_{H} 
                      = \|(J_{H})_{\ep}\varphi_{0}\|_{H} 
  \leq \|\varphi_{0}\|_{H}, 
  \\[1.5mm]
  &\|\varphi_{0\ep}\|_{L^{4}(\Omega)} 
                = \|(J_{L^{4}})_{\ep}\varphi_{0}\|_{L^{4}(\Omega)} 
  \leq \|\varphi_{0}\|_{L^{4}(\Omega)}, 
  \end{align*}
  and hence 
  \begin{align}
  &\|G(\varphi_{0\ep})\|_{L^1(\Omega)} \notag
  \leq \|\varphi_{0\ep}\|_{L^{4}(\Omega)}^{4} + 2\|\varphi_{0\ep}\|_{H}^2
  \leq \|\varphi_{0}\|_{L^{4}(\Omega)}^{4} + 2\|\varphi_{0}\|_{H}^2, 
  \\[2mm]
  &\ep\|\varphi_{0\ep}\|_{V}^2     \label{6.1}
  = \bigl(\ep(-\Delta+I)\varphi_{0\ep}, \varphi_{0\ep}\bigr)_{H} 
  = (\varphi_{0}-\varphi_{0\ep}, \varphi_{0\ep})_{H} 
  \leq \|\varphi_{0}\|_{H}^2. 
  \end{align}
 Thus there exists $\varphi_{0\ep}$ satisfying (A8). 
 
 Moreover, the inequality  
 $$
 \|\varphi_{0\ep}-\varphi_{0}\|_{V^{*}} \leq \ep^{1/2}\|\varphi_{0}\|_{H}
 $$ 
 holds. 
 Indeed, \eqref{defF}, \eqref{innerVstar} and \eqref{6.1} yield that 
 $$
 \|\varphi_{0\ep}-\varphi_{0}\|_{V^{*}}^2 
 = \|\ep(-\Delta+1)\varphi_{0\ep}\|_{V^{*}}^2 
 = \ep^2\|F\varphi_{0\ep}\|_{V^{*}}^2 
 = \ep^2\|\varphi_{0\ep}\|_{V}^2 
 \leq \ep\|\varphi_{0}\|_{H}^2.
 $$

Therefore (A1)-(A8) hold 
for the functions $a$, $J$, $G$ and $G_{\ep}$ in the example. 

\section{Preliminaries}\label{Sec3}

In this section we will provide some results which will be used later for 
the proofs of Theorems \ref{maintheorem1} and \ref{maintheorem2}. 
 \begin{lem}[{\cite[Section 8, Corollary 4]{Si-1987}}] \label{Ascoli}                                                                   
 Assume that 
 $$
 X \subset Z \subset Y \ \mbox{with compact embedding}\ X \to Z\ 
 \mbox{$($$X$, $Z$ and $Y$ are Banach spaces$)$.}
 $$
 \begin{enumerate}
 \item[(i)] Let $F$ be bounded in $L^p(0, T; X)$ and 
 $\{\frac{\partial v}{\partial t}\ |\ v\in F \}$ 
 be bounded in $L^1(0, T; Y)$ with some constant $1\leq p<\infty$. 
 Then $F$ is relatively compact in $L^p(0, T; Z)$. 
 \item[(ii)] Let $F$ be bounded in $L^{\infty}(0, T; X)$ and 
 $\{\frac{\partial v}{\partial t}\ |\ v\in F \}$ 
 be bounded in $L^r(0, T; Y)$ with some constant $r>1$. 
 Then $F$ is relatively compact in $C(0, T; Z)$. 
 \end{enumerate}
 \end{lem}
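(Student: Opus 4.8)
The plan is to reduce the statement to two classical ingredients: the Ehrling--Lions interpolation inequality and the vector-valued Fr\'echet--Kolmogorov compactness criterion for part (i), respectively the Arzel\`a--Ascoli theorem for part (ii) (so that the whole lemma is the familiar Aubin--Lions--Simon compactness result). For the first ingredient, since $X\hookrightarrow Z$ is compact and $Z\hookrightarrow Y$ is continuous, a standard argument by contradiction yields: for every $\eta>0$ there is $C_\eta>0$ such that
\begin{equation*}
\|w\|_Z\le \eta\|w\|_X+C_\eta\|w\|_Y\qquad\mbox{for all }w\in X.
\end{equation*}
For the second ingredient I would use the criterion that a bounded $F\subset L^p(0,T;Z)$, $1\le p<\infty$, is relatively compact if and only if $\bigl\{\int_{t_1}^{t_2}v(t)\,dt:v\in F\bigr\}$ is relatively compact in $Z$ for all $0<t_1<t_2<T$ and $\sup_{v\in F}\|v(\cdot+h)-v\|_{L^p(0,T-h;Z)}\to 0$ as $h\searrow 0$; for $C([0,T];Z)$ the analogous criterion is pointwise relative compactness of $\{v(t):v\in F\}$ together with uniform equicontinuity.

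For part (i): let $M$ bound $F$ in $L^p(0,T;X)$ and $N$ bound $\{\partial_t v:v\in F\}$ in $L^1(0,T;Y)$. H\"older's inequality gives $\|\int_{t_1}^{t_2}v(t)\,dt\|_X\le (t_2-t_1)^{1/p'}M$, so the averaged values lie in a bounded subset of $X$, hence in a relatively compact subset of $Z$. Since $v\in W^{1,1}(0,T;Y)\hookrightarrow C([0,T];Y)$ and $v$ is also bounded in $L^1(0,T;Y)$ through $X\hookrightarrow Y$, the family $F$ is in fact bounded in $L^\infty(0,T;Y)$, say by $M'$; from $v(t+h)-v(t)=\int_t^{t+h}\partial_t v(s)\,ds$ one gets $\|v(\cdot+h)-v\|_{L^1(0,T-h;Y)}\le hN$ and hence $\|v(\cdot+h)-v\|_{L^p(0,T-h;Y)}\le (2M')^{1-1/p}(hN)^{1/p}$. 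Applying the Ehrling inequality slicewise in $t$,
\begin{equation*}
\sup_{v\in F}\|v(\cdot+h)-v\|_{L^p(0,T-h;Z)}\le 2\eta M+C_\eta (2M')^{1-1/p}(hN)^{1/p},
\end{equation*}
so $\limsup_{h\searrow0}\sup_{v\in F}\|v(\cdot+h)-v\|_{L^p(0,T-h;Z)}\le 2\eta M$, and letting $\eta\searrow0$ gives the required equicontinuity of translations. The criterion then yields relative compactness of $F$ in $L^p(0,T;Z)$.

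For part (ii): now $\{\partial_t v\}$ is bounded in $L^r(0,T;Y)$ with $r>1$, so $v\in W^{1,r}(0,T;Y)\hookrightarrow C^{0,1-1/r}([0,T];Y)$ and $\|v(t)-v(s)\|_Y\le N|t-s|^{1-1/r}$, while $F$ is bounded in $L^\infty(0,T;X)$, say by $M$. One first checks that $v(t)\in X$ with $\|v(t)\|_X\le M$ for \emph{every} $t$ (writing $v(t)$ as the $Y$-limit of averages $\frac1h\int_t^{t+h}v$, which lie in the $X$-ball of radius $M$, and invoking weak compactness of that ball when $X$ is reflexive, or the general argument of Simon otherwise). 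Then Ehrling gives, for all $s,t$,
\begin{equation*}
\|v(t)-v(s)\|_Z\le 2\eta M+C_\eta N|t-s|^{1-1/r},
\end{equation*}
i.e.\ uniform equicontinuity in $Z$, and $\{v(t):v\in F\}$ is bounded in $X$, hence relatively compact in $Z$, for each fixed $t$; Arzel\`a--Ascoli then gives relative compactness of $F$ in $C([0,T];Z)$.

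The genuine obstacle is not any individual estimate but the careful handling of the vector-valued compactness criteria and of the pointwise-in-time values of $v$: one must ensure that the time derivatives are genuine Bochner derivatives (so that the fundamental theorem of calculus applies), that $v$ admits a continuous $Y$-valued representative, and --- in part (ii) --- that the $L^\infty(0,T;X)$ bound transfers to \emph{every} $t$ rather than almost every $t$, which in a nonreflexive $X$ requires Simon's more delicate argument. Once those criteria are granted, the remainder is the routine bookkeeping displayed above, so in practice the most economical course is to invoke \cite{Si-1987} directly, as is done here.
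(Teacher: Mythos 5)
The paper offers no proof of this lemma at all: it is quoted verbatim from Simon's compactness paper and invoked as a black box, exactly as you suggest doing in your closing sentence. Your sketch is a correct reconstruction of the standard Aubin--Lions--Simon argument (Ehrling's inequality combined with the translation criterion in $L^p(0,T;Z)$ for part (i) and Arzel\`a--Ascoli for part (ii)), which is essentially the proof given in \cite{Si-1987} itself, and you correctly flag the one delicate point --- transferring the $L^\infty(0,T;X)$ bound to every $t$ in part (ii) --- rather than glossing over it.
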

%
 \begin{lem}\label{ra}
  Let $\lambda>0$ and put 
 \begin{align*}
 &J_{\lambda}:=(I - \lambda \Delta)^{-1} : H \to H, \quad
    (-\Delta)_{\lambda}:= \frac{1}{\lambda}(I - J_{\lambda}) : H \to H,  
 \\ 
 &\tilde{A}:=F-I : V \to V^{*}, \quad 
 \tilde{J}_{\lambda}:=
 \bigl(I + \lambda\tilde{A}\bigr)^{-1} : V^{*} \to V^{*}. 
 \end{align*}
 Then we have 
 \begin{align}
 &\tilde{J}_{\lambda}|_{H} = J_{\lambda}, \label{tool1}\\
 &\|J^{1/2}_{\lambda}v\|_{H} \leq \|v\|_{H}, \label{tool2}\\
 &\|J^{1/2}_1v\|_{V} = \|v\|_{H} \label{tool3} 
 \end{align}
 for all $v \in H$ 
 and 
 \begin{align}\label{tool4}
 \|(-\Delta)^{1/2}_{\lambda} v\|_{H} \leq \|v\|_{V} 
 \end{align}
 for all $v \in V$, 
 where 
 $-\Delta : W \subset H \to H$ is the Neumann Laplacian. 
 \end{lem}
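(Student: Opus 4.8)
The plan is to derive all four identities from the single fact that the Neumann Laplacian $-\Delta\colon W\subset H\to H$ is a nonnegative self-adjoint operator, so that $J_\lambda=(I-\lambda\Delta)^{-1}$, the Yosida approximation $(-\Delta)_\lambda=(-\Delta)J_\lambda$, and all their square roots are obtained from one Borel functional calculus for $-\Delta$ and hence commute pairwise. Before treating the four identities I would isolate two ingredients. \emph{Ingredient} (a): $J_\lambda$ is self-adjoint on $H$ with $\|J_\lambda\|_{\mathcal L(H)}\le 1$; indeed, writing $u=J_\lambda f$ one has $u+\lambda(-\Delta)u=f$ in $H$, and testing by $u$ and integrating by parts (legitimate since $u\in W$, so $\partial_\nu u=0$) gives $\|u\|_H^2+\lambda\|\nabla u\|_H^2=(f,u)_H\le\|f\|_H\|u\|_H$, whence $\|J_\lambda f\|_H\le\|f\|_H$. \emph{Ingredient} (b): $D((-\Delta+I)^{1/2})=V$ with $\|(-\Delta+I)^{1/2}v\|_H=\|v\|_V$, and $D((-\Delta)^{1/2})=V$ with $\|(-\Delta)^{1/2}v\|_H=\|\nabla v\|_H$, for all $v\in V$; this follows from the representation theorem for closed nonnegative symmetric forms applied to $(u,v)\mapsto(u,v)_V$ (resp.\ $(u,v)\mapsto\int_\Omega\nabla u\cdot\nabla v$) on $V$, whose associated self-adjoint operator is $-\Delta+I$ (resp.\ $-\Delta$) by elliptic regularity for the Neumann problem on $\Omega$.

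For \eqref{tool1}, I would first note that $\tilde J_\lambda$ is well defined: the form $(v,w)\mapsto\langle(I+\lambda\tilde A)v,w\rangle_{V^*,V}=(v,w)_H+\lambda\bigl((v,w)_V-(v,w)_H\bigr)$ on $V\times V$ is bounded and coercive, since $\langle(I+\lambda\tilde A)v,v\rangle_{V^*,V}=\|v\|_H^2+\lambda\|\nabla v\|_H^2\ge\min(1,\lambda)\|v\|_V^2$, so $I+\lambda\tilde A\colon V\to V^*$ is a bijection by the Lax--Milgram theorem. Then, given $f\in H$, put $u:=J_\lambda f\in W$; from $u+\lambda(-\Delta)u=f$ in $H$ one gets, for every $w\in V$,
\[
\langle(I+\lambda\tilde A)u,w\rangle_{V^*,V}=(u,w)_H+\lambda\int_\Omega\nabla u\cdot\nabla w=\bigl(u+\lambda(-\Delta)u,w\bigr)_H=(f,w)_H=\langle f,w\rangle_{V^*,V},
\]
using $\langle Fu,w\rangle_{V^*,V}=(u,w)_V$ and one integration by parts; hence $(I+\lambda\tilde A)u=f$ in $V^*$, i.e.\ $u=\tilde J_\lambda f$, which is $\tilde J_\lambda|_H=J_\lambda$.

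For \eqref{tool2}: by self-adjointness of $J_\lambda^{1/2}$ and ingredient (a), $\|J_\lambda^{1/2}v\|_H^2=(J_\lambda v,v)_H\le\|J_\lambda v\|_H\|v\|_H\le\|v\|_H^2$ for $v\in H$; in particular $\|J_\lambda^{1/2}\|_{\mathcal L(H)}\le1$. For \eqref{tool3}: since $\lambda=1$, $J_1=(-\Delta+I)^{-1}$ and thus $J_1^{1/2}=(-\Delta+I)^{-1/2}$; because $J_1^{1/2}v\in D((-\Delta+I)^{1/2})=V$, ingredient (b) gives $\|J_1^{1/2}v\|_V=\|(-\Delta+I)^{1/2}(-\Delta+I)^{-1/2}v\|_H=\|v\|_H$. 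For \eqref{tool4}: the functional calculus yields $(-\Delta)_\lambda^{1/2}=(-\Delta)^{1/2}J_\lambda^{1/2}$, and for $v\in V=D((-\Delta)^{1/2})$ this equals $J_\lambda^{1/2}(-\Delta)^{1/2}v$, so using \eqref{tool2} and ingredient (b),
\[
\|(-\Delta)_\lambda^{1/2}v\|_H=\|J_\lambda^{1/2}(-\Delta)^{1/2}v\|_H\le\|(-\Delta)^{1/2}v\|_H=\|\nabla v\|_H\le\|v\|_V.
\]

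I expect the only non-routine point to be ingredient (b): on the bounded-boundary but \emph{unbounded} domain $\Omega$ one must know that the form domain $D((-\Delta_N)^{1/2})$ coincides exactly with $H^1(\Omega)=V$, with matching norms. This is standard --- it is the statement that $-\Delta_N$ is the operator associated with the Dirichlet form on $H^1(\Omega)$ --- but it should be cited (or derived from the representation theorem together with elliptic regularity for the Neumann problem) rather than taken for granted. Everything else is bookkeeping with the functional calculus and a single integration by parts.
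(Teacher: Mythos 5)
Your proof is correct, and for \eqref{tool3} and \eqref{tool4} it follows a genuinely different route from the paper's. For \eqref{tool1} the paper simply cites \cite[Lemma 3.3]{KY2}, whereas you give a self-contained Lax--Milgram plus integration-by-parts argument; for \eqref{tool2} the two arguments coincide. The real divergence is that you ground everything in the spectral/form calculus for the self-adjoint Neumann Laplacian on $H$: your ingredient (b), the identification $D((-\Delta+I)^{1/2})=V$ with $\|(-\Delta+I)^{1/2}v\|_{H}=\|v\|_{V}$, immediately gives \eqref{tool3} since $J_{1}^{1/2}=(-\Delta+I)^{-1/2}$, and the commuting factorization $(-\Delta)_{\lambda}^{1/2}=J_{\lambda}^{1/2}(-\Delta)^{1/2}$ gives \eqref{tool4} with the sharper bound $\|(-\Delta)_{\lambda}^{1/2}v\|_{H}\leq\|\nabla v\|_{(L^2(\Omega))^N}$. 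The paper instead avoids invoking the square root of the unbounded operator on $H$ altogether: for \eqref{tool3} it works with the realization $\tilde{J}_{1}$ on $V^{*}$ and the duality identity $(\tilde{J}_{1}^{1/2}v^{*},v)_{H}=\langle v^{*},J_{1}^{1/2}v\rangle_{V^{*},V}$ quoted from \cite[Lemma 3.3]{OSY}, and for \eqref{tool4} it uses only the bounded operator $(-\Delta)_{\lambda}$, Young's inequality, and an absorption argument bounding $\|(F-I)J_{\lambda}v\|_{V^{*}}^2$ by $\|(-\Delta)_{\lambda}^{1/2}v\|_{H}^2$. Your approach is cleaner and slightly stronger, but it does lean on the form--operator correspondence for the Neumann Laplacian on the unbounded domain $\Omega$ (i.e.\ that the operator with domain $W$ is exactly the operator associated with the form $(u,v)_V$, which requires $H^2$ elliptic regularity for the exterior Neumann problem); you correctly flag this as the one point needing a citation, and the paper's more pedestrian $V$--$V^{*}$ manipulations are presumably designed precisely to sidestep it.
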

\begin{proof}
\eqref{tool1} can be shown by the same argument as in \cite[Lemma 3.3]{KY2}. 
We will prove \eqref{tool2}. From the properties of $J_{\lambda}$ we have that 
$$
\|J^{1/2}_{\lambda}v\|^2_{H} = (J^{1/2}_{\lambda}v, J^{1/2}_{\lambda}v)_{H} 
                                   = (v, J_{\lambda}v)_{H} 
                                  \leq \|v\|_{H}\|J_{\lambda}v\|_{H}
                                  \leq \|v\|^2_{H}
$$
for all $v \in H$. 
Thus \eqref{tool2} holds. 

Next we show \eqref{tool3}. 
By noting that $F=\tilde{A} + I$ and 
$(\tilde{J}^{1/2}_1 v^{*}, v)_{H} = \langle v^{*}, J^{1/2}_1 v \rangle_{V^{*}, V}$ 
for all $v^{*} \in V^{*}$ and all $v \in H$ (see e.g., \cite[Lemma 3.3]{OSY}), 
it follows from \eqref{defF} and \eqref{tool1} that 
\begin{align*}
\|J^{1/2}_1 v\|^2_{V} 
            &= (J^{1/2}_1 v, J^{1/2}_1 v)_{V} 
            = \langle FJ^{1/2}_1 v, J^{1/2}_1 v \rangle_{V^*, V} 
            = \langle F\tilde{J}^{1/2}_1 v, J^{1/2}_1 v \rangle_{V^*, V} 
            \\
            &= (\tilde{J}^{1/2}_1 F\tilde{J}^{1/2}_1 v, v)_{H} 
            = ((I+\tilde{A})^{1/2}\tilde{J}^{1/2}_1 v, v)_{H} 
            = (v, v)_{H} 
            = \|v\|^2_{H}
\end{align*}
for all $v \in H$, which implies \eqref{tool3}. 

Next we confirm \eqref{tool4}. Let $v \in V$. 
Then it holds that  
\begin{align}\label{tool4-1}
\|(-\Delta)_{\lambda}^{1/2}v\|_{H}^2 
&= ((-\Delta)_{\lambda}v, v)_{H} 
= (-\Delta J_{\lambda}v, v)_{H} 
= \langle (F-I)J_{\lambda}v, v \rangle_{V^{*}, V}
\\ \notag
&\leq \frac{1}{2}\|(F-I)J_{\lambda}v\|_{V^{*}}^2 + \frac{1}{2}\|v\|_{V}^2. 
\end{align}
Here we infer from \eqref{defF} and \eqref{innerVstar} that 
\begin{align}\label{tool4-2}
&\|(F-I)J_{\lambda}v\|_{V^{*}}^2 
\\ \notag
&= \|FJ_{\lambda}v\|_{V^{*}}^2 -2(FJ_{\lambda}v, J_{\lambda}v)_{V^{*}} 
   + \|J_{\lambda}v\|_{V^{*}}^2
=\|J_{\lambda}v\|_{V}^2 - 2\|J_{\lambda}v\|_{H}^2 + \|J_{\lambda}v\|_{V^{*}}^2 
\\ \notag 
&\leq \|J_{\lambda}v\|_{V}^2 - \|J_{\lambda}v\|_{H}^2 
= (J_{\lambda}v, (-\Delta)_{\lambda}v)_{H} 
= - \lambda \|(-\Delta)_{\lambda}v\|_{H}^2 + (v, (-\Delta)_{\lambda}v)_{H} 
\\ \notag
&\leq (v, (-\Delta)_{\lambda}v)_{H} 
=\|(-\Delta)_{\lambda}^{1/2}v\|_{H}^2. 
\end{align}
Hence combination of \eqref{tool4-1} and \eqref{tool4-2} derives \eqref{tool4}.

\end{proof} 
%
\begin{lem}\label{keylemma}
Let $\widetilde{\Omega} \subset \Omega$ be 
a bounded domain with smooth boundary 
and 
let $\{v_{\lambda}\}_{\lambda} \subset H^1(0, T; H) \cap L^{\infty}(0, T; H)$ 
satisfy that 
$\{v_{\lambda}\}_{\lambda}$ is bounded in $L^{\infty}(0, T; H)$ and 
$\{v_{\lambda}'\}_{\lambda}$ is bounded in $L^2(0, T; H)$. 
Then 
\begin{align*}
&v_{\lambda} \to v\ \mbox{weakly$^{*}$ in}\ L^{\infty}(0, T; H), \\
&J^{1/2}_{1}v_{\lambda} \to J^{1/2}_{1} v\ 
\mbox{in}\ C([0, T]; L^2(\widetilde{\Omega}))
\end{align*}
as $\lambda = \lambda_j \searrow0$ 
with some function $v \in L^{\infty}(0, T; H)$. 
\end{lem}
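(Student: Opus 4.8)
The plan is to treat the two convergences separately. The weak$^{*}$ convergence $v_{\lambda}\to v$ in $L^{\infty}(0,T;H)$ will come for free from the Banach--Alaoglu theorem, since $\{v_{\lambda}\}$ is bounded in $L^{\infty}(0,T;H)=(L^{1}(0,T;H))^{*}$; this also yields $v_{\lambda_{j}}\to v$ weakly in $L^{2}(0,T;H)$ along some $\lambda_{j}\searrow0$, which I will need for the identification step. The real content is the strong convergence of $J^{1/2}_{1}v_{\lambda}$ on the bounded subdomain $\widetilde{\Omega}$, and for that I would apply the Aubin--Lions--Simon lemma (Lemma \ref{Ascoli}(ii)) to the regularised sequence $w_{\lambda}:=J^{1/2}_{1}v_{\lambda}$, which the operator $J^{1/2}_{1}$ pushes one Sobolev order higher than $v_{\lambda}$.

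First I would record that $J^{1/2}_{1}:H\to V$ is bounded linear, so it commutes with $\partial_{t}$ and $w_{\lambda}\in H^{1}(0,T;V)$ with $w_{\lambda}'=J^{1/2}_{1}v_{\lambda}'$. Then \eqref{tool3} gives $\|w_{\lambda}(t)\|_{V}=\|v_{\lambda}(t)\|_{H}$ and $\|w_{\lambda}'(t)\|_{V}=\|v_{\lambda}'(t)\|_{H}$ for a.a.\ $t\in(0,T)$, so the hypotheses on $\{v_{\lambda}\}$ translate into: $\{w_{\lambda}\}$ is bounded in $L^{\infty}(0,T;V)$ and $\{w_{\lambda}'\}$ is bounded in $L^{2}(0,T;V)$, hence, after restriction to $\widetilde{\Omega}$, bounded in $L^{2}(0,T;L^{2}(\widetilde{\Omega}))$.

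Next I would invoke Lemma \ref{Ascoli}(ii) with $X=V=H^{1}(\Omega)$, $Z=Y=L^{2}(\widetilde{\Omega})$ and $r=2>1$: the restriction map $H^{1}(\Omega)\to H^{1}(\widetilde{\Omega})$ is continuous and, $\widetilde{\Omega}$ being bounded with smooth boundary, $H^{1}(\widetilde{\Omega})\hookrightarrow L^{2}(\widetilde{\Omega})$ is compact by the Rellich--Kondrachov theorem, so $X\to Z$ is a compact embedding. This yields that $\{w_{\lambda}\}$ is relatively compact in $C([0,T];L^{2}(\widetilde{\Omega}))$; passing to a further subsequence of $\{\lambda_{j}\}$, $J^{1/2}_{1}v_{\lambda_{j}}\to\xi$ in $C([0,T];L^{2}(\widetilde{\Omega}))$ for some $\xi$. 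To identify $\xi$, note that $v_{\lambda_{j}}\to v$ weakly in $L^{2}(0,T;H)$ and $J^{1/2}_{1}$ is bounded linear on $H$, so $J^{1/2}_{1}v_{\lambda_{j}}\to J^{1/2}_{1}v$ weakly in $L^{2}(0,T;H)$, hence weakly in $L^{2}(0,T;L^{2}(\widetilde{\Omega}))$ after restriction; uniqueness of weak limits forces $\xi=J^{1/2}_{1}v$ on $\widetilde{\Omega}\times(0,T)$. Since every subsequence of $\{w_{\lambda_{j}}\}$ then has a sub-subsequence converging to the same limit $J^{1/2}_{1}v$, the whole sequence $w_{\lambda_{j}}$ converges in $C([0,T];L^{2}(\widetilde{\Omega}))$, giving the two assertions along a common $\lambda_{j}\searrow0$.

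The point to be careful about --- and the reason the statement is localised to a bounded $\widetilde{\Omega}$ --- is that on the unbounded domain $\Omega$ the embedding $V\hookrightarrow H$ is \emph{not} compact, so Lemma \ref{Ascoli} cannot be used directly for $v_{\lambda}$ itself; restricting to $\widetilde{\Omega}$ restores Rellich compactness, and applying $J^{1/2}_{1}$ is exactly the device that converts the available $H$-bound on $v_{\lambda}$ into the $V$-bound that Lemma \ref{Ascoli} requires. Beyond setting this up the argument is routine; the only minor points are the commutation of $J^{1/2}_{1}$ with $\partial_{t}$ and the passage between weak and weakly$^{*}$ convergence on the finite interval $(0,T)$.
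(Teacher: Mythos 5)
Your proposal is correct and follows essentially the same route as the paper: apply $J^{1/2}_{1}$ to lift the $H$-bounds on $v_\lambda$ and $v_\lambda'$ to $H^1$-bounds via \eqref{tool3} and \eqref{tool2}, restrict to the bounded subdomain $\widetilde{\Omega}$ to recover Rellich compactness, invoke Lemma \ref{Ascoli}(ii), and identify the uniform limit with $J^{1/2}_{1}v$ through the weak/weak$^{*}$ convergence of $v_\lambda$ (the paper does this last step by testing against $\psi\in C_{\mathrm c}^{\infty}([0,T]\times\widetilde{\Omega})$ and moving $J^{1/2}_{1}$ onto $\psi$, which is just the concrete form of your ``bounded linear operators preserve weak limits'' argument). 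No gaps.
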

\begin{proof}
There exists $v \in  L^{\infty}(0, T; H)$ such that 
\begin{align*}
&v_{\lambda} \to v\ 
\mbox{weakly$^{*}$ in}\ L^{\infty}(0, T; H)
\end{align*}
as $\lambda = \lambda_j \searrow0$. 
We see that 
\begin{equation}\label{embedding}
H^1(\widetilde{\Omega}) \subset L^2(\widetilde{\Omega}) 
\subset L^2(\widetilde{\Omega})\ 
\mbox{with compact embedding}\ 
H^1(\widetilde{\Omega}) \to L^2(\widetilde{\Omega}).  
\end{equation}
It follows from \eqref{tool3} that 
$$
\|J^{1/2}_1 v_{\lambda}(t)\|_{H^1(\widetilde{\Omega})} 
\leq \|J^{1/2}_1v_{\lambda}(t)\|_{V} 
= \|v_{\lambda}(t)\|_{H}.  
$$
Thus  
there exists a constant $C_1>0$ such that 
\begin{equation}\label{Linfty}
\|J^{1/2}_1 v_{\lambda} \|_{L^{\infty}(0, T; H^1(\widetilde{\Omega}))} \leq C_1. 
\end{equation}
Also, from \eqref{tool2} we have that 
$$
\|J^{1/2}_1\partial_{t}v_{\lambda}(t)\|_{L^2(\widetilde{\Omega})}
\leq \|J^{1/2}_1 \partial_{t}v_{\lambda}(t)\|_{H}
\leq \|\partial_{t}v_{\lambda}(t)\|_{H},
$$
and hence it holds that there exists a constant $C_2>0$ such that 
\begin{equation}\label{L2}
\|J^{1/2}_1 \partial_{t}v_{\lambda}\|_{L^2(0, T; L^2(\widetilde{\Omega}))} \leq C_2.
\end{equation}
Therefore 
applying \eqref{embedding}-\eqref{L2} and Lemma \ref{Ascoli} 
yields that 
\begin{equation}\label{AfterAscoli}
J^{1/2}_1 v_{\lambda} \to w \quad \mbox{in}\ C([0, T]; L^2(\widetilde{\Omega}))
\end{equation}
as $\lambda = \lambda_j \searrow0$ 
with some function $w \in C([0, T]; L^2(\widetilde{\Omega}))$. 
Now, let $\psi \in C_{\mathrm c}^{\infty}([0, T] \times \widetilde{\Omega})$ 
and we will show that 
\begin{equation}\label{henbunhou}
\int_0^T \Bigl(\int_{\widetilde{\Omega}} \bigl(J^{1/2}_1v(t) -w(t) \bigr)\psi(t) 
                                                                                                 \Bigr)\,dt 
= 0.
\end{equation} 
We see that 
\begin{align}\label{LL}
\int_0^T \Bigl(\int_{\widetilde{\Omega}} 
                                      \bigl(J^{1/2}_1v_{\lambda}(t) \bigr)\psi(t) \Bigr)\,dt
= \int_0^T (J^{1/2}_1v_{\lambda}(t), \psi(t))_{H}\,dt
= \int_0^T (v_{\lambda}(t), J^{1/2}_1\psi(t))_{H}\,dt. 
\end{align}
Where, 
since 
$\psi \in C_{\mathrm c}^{\infty}([0, T]\times \widetilde{\Omega}) 
\subset C_{\mathrm c}^{\infty}([0, T]\times \Omega) 
\subset L^1(0, T; H)$,   
we infer from \eqref{tool2} that 
$$
J^{1/2}_1\psi \in L^1(0, T; H).
$$
Therefore it follows that 
\begin{align}\label{kyokugen}
\int_0^T (v_{\lambda}(t), J^{1/2}_1\psi(t))_{H}\,dt 
\to \int_0^T (v(t), J^{1/2}_1\psi(t))_{H}\,dt
= \int_0^T (J^{1/2}_1v(t), \psi(t))_{H}\,dt
\end{align}
as $\lambda = \lambda_j \searrow0$. 
Thus combination of \eqref{AfterAscoli}, \eqref{LL} and \eqref{kyokugen} 
leads to \eqref{henbunhou}, and hence it holds that 
\begin{equation}\label{Afterhenbunhou}
w = J^{1/2}_1v \quad \mbox{a.e.\ in}\ (0, T) \times \widetilde{\Omega}.
\end{equation} 
From \eqref{AfterAscoli} and \eqref{Afterhenbunhou} we have 
\begin{equation}\label{itiyou}
J^{1/2}_1v_{\lambda} \to J^{1/2}_1 v 
\quad \mbox{in}\ C([0, T]; L^2(\widetilde{\Omega}))
\end{equation}
as $\lambda = \lambda_j \searrow0$. 
\end{proof}
%
\begin{lem}\label{keylemma2}
Let $\widetilde{\Omega} \subset \Omega$ be 
a bounded domain with smooth boundary 
and 
let $\{v_{\lambda}\}_{\lambda} \subset H^1(0, T; H)$ 
satisfy that 
$\{v_{\lambda}\}_{\lambda}$ is bounded in $L^2(0, T; H)$ and 
$\{v_{\lambda}'\}_{\lambda}$, $\{(-\Delta)_{\lambda}v_{\lambda}\}_{\lambda}$ 
are bounded in $L^2(0, T; H)$. 
Then 
\begin{align*}
&v_{\lambda} \to v\ 
\mbox{in}\ L^2(0, T; L^2(\widetilde{\Omega}))
\end{align*}
as $\lambda = \lambda_j \searrow0$ 
with some function $v \in L^2(0, T; W)$.
\end{lem}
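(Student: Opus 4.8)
The plan is to extract from the uniform bounds a limit function $v$ and then upgrade weak convergence to strong convergence on the bounded subdomain $\widetilde{\Omega}$ by an Aubin--Lions--Simon argument, using the key point that $(-\Delta)_\lambda v_\lambda$ bounded in $L^2(0,T;H)$ forces the limit to lie in $L^2(0,T;W)$.

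\bigskip

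First I would use the boundedness of $\{v_\lambda\}_\lambda$ in $L^2(0,T;H)$ to extract a subsequence $\lambda=\lambda_j\searrow 0$ with $v_\lambda\to v$ weakly in $L^2(0,T;H)$ for some $v\in L^2(0,T;H)$. Next, to identify $v$ as an element of $L^2(0,T;W)$, I would test the relation $(-\Delta)_\lambda v_\lambda = -\Delta J_\lambda v_\lambda$ against smooth functions: since $\{(-\Delta)_\lambda v_\lambda\}_\lambda$ is bounded in $L^2(0,T;H)$ we may assume $(-\Delta)_\lambda v_\lambda\to\xi$ weakly in $L^2(0,T;H)$. Using $J_\lambda v_\lambda\to v$ weakly in $L^2(0,T;H)$ (which follows because $\|J_\lambda v_\lambda-v_\lambda\|_H=\lambda\|(-\Delta)_\lambda v_\lambda\|_H\to 0$ in $L^2(0,T;H)$ by the $\lambda$-factor and the uniform bound), one passes to the limit in $\int_0^T(J_\lambda v_\lambda,-\Delta\zeta)_H$ against test functions $\zeta\in W$ to obtain that $v$ has a weak Laplacian, $-\Delta v=\xi\in L^2(0,T;H)$, together with the Neumann boundary condition (inherited from $J_\lambda v_\lambda\in D(-\Delta)=W$), hence $v\in L^2(0,T;W)$ by elliptic regularity for the Neumann Laplacian on $\Omega$.

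\bigskip

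For the strong convergence on $\widetilde{\Omega}$, I would apply Lemma \ref{Ascoli}(i) with the triple $H^1(\widetilde{\Omega})\subset L^2(\widetilde{\Omega})\subset L^2(\widetilde{\Omega})$ (compact first embedding, as in \eqref{embedding}), applied not to $v_\lambda$ directly but to $J_\lambda v_\lambda = v_\lambda - \lambda(-\Delta)_\lambda v_\lambda$. The family $\{J_\lambda v_\lambda\}_\lambda$ is bounded in $L^2(0,T;H^1(\widetilde{\Omega}))$: indeed $-\Delta J_\lambda v_\lambda = (-\Delta)_\lambda v_\lambda$ is bounded in $L^2(0,T;H)$ and $J_\lambda v_\lambda$ is bounded in $L^2(0,T;H)$, so interior-plus-boundary elliptic estimates on the smooth bounded domain $\widetilde{\Omega}$ (or just $\|J_\lambda v_\lambda\|_{H^1}^2\le C(\|(-\Delta)_\lambda v_\lambda\|_H+1)\|J_\lambda v_\lambda\|_H + \|J_\lambda v_\lambda\|_H^2$-type bounds) give an $L^2(0,T;H^1(\widetilde{\Omega}))$ bound. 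For the time derivative, $\partial_t J_\lambda v_\lambda = J_\lambda v_\lambda'$ is bounded in $L^2(0,T;H)\hookrightarrow L^1(0,T;L^2(\widetilde{\Omega}))$ by \eqref{tool2}. Lemma \ref{Ascoli}(i) then yields $J_\lambda v_\lambda\to w$ strongly in $L^2(0,T;L^2(\widetilde{\Omega}))$ along a further subsequence; identifying $w=v$ by a variational argument against $\psi\in C_{\mathrm c}^\infty([0,T]\times\widetilde{\Omega})$ exactly as in the proof of Lemma \ref{keylemma} (using $\|J_\lambda v_\lambda - v_\lambda\|_{L^2(0,T;H)}=\|\lambda(-\Delta)_\lambda v_\lambda\|_{L^2(0,T;H)}\le\lambda\,C\to 0$ together with the weak limit of $v_\lambda$). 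Finally, since $\|v_\lambda - J_\lambda v_\lambda\|_{L^2(0,T;L^2(\widetilde{\Omega}))}\le \|v_\lambda-J_\lambda v_\lambda\|_{L^2(0,T;H)}\le \lambda\,C\to 0$, we conclude $v_\lambda\to v$ strongly in $L^2(0,T;L^2(\widetilde{\Omega}))$.

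\bigskip

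The main obstacle I anticipate is the regularity bookkeeping: getting a clean uniform $L^2(0,T;H^1(\widetilde{\Omega}))$ bound on $J_\lambda v_\lambda$ from the bounds on $v_\lambda$ and $(-\Delta)_\lambda v_\lambda$, since this requires the elliptic estimate near $\partial\widetilde{\Omega}$ where $J_\lambda v_\lambda$ does not satisfy a convenient boundary condition (it satisfies Neumann on $\partial\Omega$, not on $\partial\widetilde{\Omega}$). This is handled by the standard localization: cut off with $\chi\in C_{\mathrm c}^\infty(\Omega)$ equal to $1$ on $\widetilde{\Omega}$, and estimate $\chi J_\lambda v_\lambda$ whose Laplacian $-\Delta(\chi J_\lambda v_\lambda) = \chi(-\Delta)_\lambda v_\lambda - 2\nabla\chi\cdot\nabla J_\lambda v_\lambda - (\Delta\chi)J_\lambda v_\lambda$ requires first an $H^1$ bound on $J_\lambda v_\lambda$ on a slightly larger subdomain — obtained from $\|\nabla J_\lambda v_\lambda\|_H^2 = ((-\Delta)_\lambda v_\lambda, J_\lambda v_\lambda)_H \le \|(-\Delta)_\lambda v_\lambda\|_H\|J_\lambda v_\lambda\|_H$ using the Neumann condition of $J_\lambda v_\lambda$ on $\partial\Omega$, which holds globally. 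With the global $H^1$ bound in hand the localization closes the argument. The identification of the limit's regularity ($v\in L^2(0,T;W)$) and the variational identification $w=v$ are then routine repetitions of the techniques already used in Lemma \ref{keylemma}.
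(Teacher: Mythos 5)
Your proposal is correct and follows essentially the same route as the paper: bound $J_{\lambda}v_{\lambda}$ in $L^2(0, T; H^1(\widetilde{\Omega}))$ and $J_{\lambda}\partial_t v_{\lambda}$ in $L^2(0, T; L^2(\widetilde{\Omega}))$, apply Lemma \ref{Ascoli} to get strong convergence of $J_{\lambda}v_{\lambda}$, and recover $v_{\lambda}$ via $v_{\lambda} = \lambda(-\Delta)_{\lambda}v_{\lambda} + J_{\lambda}v_{\lambda}$ with $\lambda\|(-\Delta)_{\lambda}v_{\lambda}\|\to 0$. The localization/cutoff worry in your last paragraph is superfluous: only an $H^1(\widetilde{\Omega})$ bound (not an $H^2$ elliptic estimate) is needed for the compact embedding into $L^2(\widetilde{\Omega})$, and that follows by restriction from the global identity $\|J_{\lambda}v_{\lambda}\|_{V}^2 = \|J_{\lambda}v_{\lambda}\|_{H}^2 + (J_{\lambda}v_{\lambda}, (-\Delta)_{\lambda}v_{\lambda})_{H}$ which you already wrote down and which is exactly the paper's computation.
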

\begin{proof}
There exists $v \in  L^2(0, T; W)$ such that 
\begin{align*}
&v_{\lambda} \to v\ 
\mbox{weakly in}\ L^2(0, T; H), 
\\ 
&(-\Delta)_{\lambda}v_{\lambda} \to -\Delta v\ 
\mbox{weakly in}\ L^2(0, T; H) 
\end{align*}
as $\lambda = \lambda_j \searrow0$. 
We see that 
\begin{align*}
\|J_{\lambda}v_{\lambda}(t)\|_{H^1(\widetilde{\Omega})}^2
&\leq \|J_{\lambda}v_{\lambda}(t)\|_{V}^2 
= \|J_{\lambda}v_{\lambda}(t)\|_{H}^2 
   + (J_{\lambda}v_{\lambda}(t), (-\Delta)_{\lambda}v_{\lambda}(t))_{H} 
\\ 
&\leq \frac{3}{2}\|v_{\lambda}(t)\|_{H}^2 
      + \frac{1}{2}\|(-\Delta)_{\lambda}v_{\lambda}(t)\|_{H}
\end{align*}
and then 
\begin{align}\label{dougu1}
\|J_{\lambda}v_{\lambda}\|_{L^2(0, T; H^1(\widetilde{\Omega}))} 
\leq C_{1}
\end{align}
with some constant $C_{1}>0$. 
It holds that 
\begin{align*}
\|J_{\lambda}\partial_{t}v_{\lambda}(t)\|_{L^2(\widetilde{\Omega})} 
\leq \|J_{\lambda}\partial_{t}v_{\lambda}(t)\|_{H} 
\leq \|\partial_{t}v_{\lambda}(t)\|_{H},  
\end{align*}
and hence there exists a constant $C_{2}>0$ such that 
\begin{align}\label{dougu2}
\|J_{\lambda}\partial_{t}v_{\lambda}\|_{L^2(0, T; L^2(\widetilde{\Omega}))}  
\leq C_{2}. 
\end{align}
Thus it follows from 
\eqref{embedding}, \eqref{dougu1}, \eqref{dougu2} and Lemma \ref{Ascoli} 
that  
\begin{equation*}
J_{\lambda}v_{\lambda} \to v \quad 
\mbox{in}\ L^2(0, T; L^2(\widetilde{\Omega}))
\end{equation*}
as $\lambda = \lambda_j \searrow0$, which implies that 
\begin{equation*}
v_{\lambda} = \lambda(-\Delta)_{\lambda}v_{\lambda} + J_{\lambda}v_{\lambda} 
\to v \quad \mbox{in}\ 
L^2(0, T; L^2(\widetilde{\Omega}))
\end{equation*}
as $\lambda = \lambda_j \searrow0$. 
\end{proof}

\vspace{10pt}

\section{Existence of solutions to \ref{Pep}}\label{Sec4}

To show existence of weak solutions for \ref{Pep} 
this paper considers the approximation of \ref{Pep}: 
%
%
%
\begin{equation}\label{Peplam}\tag*{(P)$_{\ep, \lambda}$}
  \begin{cases} 
    \lambda\partial_t\mu_{\ep, \lambda}
    + \partial_t\varphi_{\ep, \lambda}+((-\Delta)_{\lambda}+1)\mu_{\ep, \lambda} = 0
         & \mbox{in}\ \Omega\times(0, T), \\[-0.5mm]
    \mu_{\ep, \lambda} =  \ep((-\Delta)_{\lambda}+1)\varphi_{\ep, \lambda} 
                      + a(\cdot)\varphi_{\ep, \lambda} 
                      -J\ast\varphi_{\ep, \lambda}\\ 
                                    \hspace{62.7mm} +G_{\ep}'(\varphi_{\ep, \lambda}) 
                             + \ep\partial_t\varphi_{\ep, \lambda}
         & \mbox{in}\ \Omega\times(0, T), \\[-0mm]
   \mu_{\ep, \lambda}(\cdot, 0)=\varphi_{0\ep},\  
   \varphi_{\ep, \lambda}(\cdot, 0)=\varphi_{0\ep}                                         
         & \mbox{in}\ \Omega, 
  \end{cases}
\end{equation}
where $\lambda>0$ 
and $(-\Delta)_{\lambda}$ is the Yosida approximation of $-\Delta$.

\begin{lem}\label{existencePeplam}
There exists 
a unique classical solution $(\varphi_{\ep, \lambda}, \mu_{\ep, \lambda})$ of 
{\rm \ref{Peplam}} satisfying 
$\varphi_{\ep, \lambda} \in C^1([0, T]; H)$
and 
$\mu_{\ep, \lambda} \in C^1([0, T]; H)$.
\end{lem}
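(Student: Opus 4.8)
The plan is to reformulate \ref{Peplam} as an abstract Cauchy problem for a system of ODEs in the Hilbert space $H \times H$ and to invoke the standard theory of evolution equations governed by Lipschitz perturbations of maximal monotone (or linear, bounded) operators. Concretely, set $\Phi := (\varphi_{\ep,\lambda}, \mu_{\ep,\lambda})$ and rewrite the two equations of \ref{Peplam} so that $\partial_t\Phi$ is expressed solely in terms of $\Phi$. From the second equation, $\ep\,\partial_t\varphi_{\ep,\lambda} = \mu_{\ep,\lambda} - \ep((-\Delta)_\lambda+1)\varphi_{\ep,\lambda} - a(\cdot)\varphi_{\ep,\lambda} + J\ast\varphi_{\ep,\lambda} - G_\ep'(\varphi_{\ep,\lambda})$, which gives $\partial_t\varphi_{\ep,\lambda}$ explicitly since $\ep>0$ is fixed; substituting this into the first equation, $\lambda\,\partial_t\mu_{\ep,\lambda} = -\partial_t\varphi_{\ep,\lambda} - ((-\Delta)_\lambda+1)\mu_{\ep,\lambda}$, yields $\partial_t\mu_{\ep,\lambda}$ explicitly as well since $\lambda>0$ is fixed. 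Thus the system takes the form $\partial_t\Phi = \mathcal{L}_{\ep,\lambda}\Phi + \mathcal{N}_{\ep,\lambda}(\Phi)$ on $H\times H$, with initial datum $\Phi(0) = (\varphi_{0\ep}, \varphi_{0\ep}) \in H \times H$ (indeed $\varphi_{0\ep}\in H^1(\Omega) = V \subset H$ by (A8)).

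Next I would check the structural hypotheses. The operator $(-\Delta)_\lambda + 1 = \frac1\lambda(I - J_\lambda) + I$ is bounded and linear on $H$ (with norm depending on $\lambda$), as recalled in Lemma \ref{ra}; likewise the convolution map $v \mapsto J\ast v$ is bounded on $H$ by Young's inequality since $J \in W^{1,1}(\RN)\subset L^1(\RN)$ by (A1), and multiplication by $a(\cdot)$ is bounded on $H$ because $\|a\|_{L^\infty(\Omega)} \le \|J\|_{L^1(\RN)}$. Hence $\mathcal{L}_{\ep,\lambda}$ is a bounded linear operator on $H\times H$. The only nonlinearity is $G_\ep' = \beta_\ep + \pi$: by Remark \ref{MYreg} the Yosida approximation $\beta_\ep$ is globally Lipschitz on $\mathbb{R}$ (with constant $1/\ep$) with $\beta_\ep(0)=0$, and $\pi$ is globally Lipschitz with $\pi(0)=0$ by (A4); therefore $v \mapsto G_\ep'(v)$ maps $H$ into $H$ and is globally Lipschitz on $H$. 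Consequently $\mathcal{N}_{\ep,\lambda}: H\times H \to H\times H$ is globally Lipschitz, and $\partial_t\Phi = \mathcal{L}_{\ep,\lambda}\Phi + \mathcal{N}_{\ep,\lambda}(\Phi)$ is an ODE in a Banach space with globally Lipschitz right-hand side. By the Picard--Lindelöf theorem (Cauchy--Lipschitz for Banach-space-valued ODEs), there exists a unique global solution $\Phi \in C^1([0,T]; H\times H)$, i.e. $\varphi_{\ep,\lambda}, \mu_{\ep,\lambda} \in C^1([0,T]; H)$, satisfying the initial conditions $\mu_{\ep,\lambda}(0) = \varphi_{\ep,\lambda}(0) = \varphi_{0\ep}$. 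This solution is \emph{classical} in the sense that both PDEs of \ref{Peplam} hold pointwise in $t$ as identities in $H$, since every term is a continuous $H$-valued function of $t$.

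The main obstacle — really the only point requiring care — is confirming that the nonlinear term genuinely acts continuously and Lipschitz-continuously as a map $H \to H$. The concern is that $G_\ep'$ is only defined pointwise on $\mathbb{R}$, so one must verify that the superposition (Nemytskii) operator $v \mapsto G_\ep'(v(\cdot))$ is well-defined and Lipschitz from $L^2(\Omega)$ to $L^2(\Omega)$; this follows precisely because $G_\ep' = \beta_\ep + \pi$ is globally Lipschitz on $\mathbb{R}$ and vanishes at $0$, so $|G_\ep'(r)| \le L_\ep|r|$ with $L_\ep := \tfrac1\ep + \|\pi'\|_{L^\infty(\mathbb{R})}$, which yields $\|G_\ep'(v)\|_H \le L_\ep\|v\|_H$ and $\|G_\ep'(v_1)-G_\ep'(v_2)\|_H \le L_\ep\|v_1-v_2\|_H$. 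A secondary, purely bookkeeping point is the inversion of the linear relations to solve for $\partial_t\varphi_{\ep,\lambda}$ and $\partial_t\mu_{\ep,\lambda}$: this is immediate because the coefficients $\ep$ and $\lambda$ of the time derivatives are strictly positive, so no invertibility of an unbounded operator is needed. With these observations in place the Banach-space ODE theory applies directly and the lemma follows; I would also remark that the regularity $\varphi_{\ep,\lambda}, \mu_{\ep,\lambda} \in C^1([0,T];H)$ can be bootstrapped to reveal additional spatial regularity once the a priori estimates of Lemma \ref{estimatePeplam} are available, but that is deferred to the subsequent lemmas.
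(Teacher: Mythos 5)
Your proposal is correct and follows essentially the same route as the paper: both rewrite \ref{Peplam} as an ODE $\frac{dU}{dt}=L(U)$ on $H\times H$ with $U=(\varphi_{\ep,\lambda},\mu_{\ep,\lambda})$, verify that $L$ is globally Lipschitz using the bound $\frac{1}{\lambda}$ for $(-\Delta)_{\lambda}$, $\|a\|_{L^{\infty}(\Omega)}$ for multiplication by $a(\cdot)$, Young's inequality for $J\ast{}$, and $\frac{1}{\ep}+\|\pi'\|_{L^{\infty}(\mathbb{R})}$ for $G_{\ep}'$, and then apply the Cauchy--Lipschitz--Picard theorem. No gaps.
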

\begin{proof}
We can rewrite \ref{Peplam} as 
\begin{equation}\label{K}\tag{K}
  \begin{cases} 
   \frac{dU}{dt} = L(U)  &\mbox{on}\ [0, T], \\[0.5mm]
   U(0)=U_{0}, 
  \end{cases}
\end{equation}
where 
$U=\left(
    \begin{array}{c}
      \varphi_{\ep, \lambda} \\
      \mu_{\ep, \lambda}
    \end{array}
  \right), \
U_{0}=\left(
    \begin{array}{c}
      \varphi_{0\ep} \\
      \varphi_{0\ep}
    \end{array}
  \right) 
\in H\times H$ and 
\begin{align*}
&L: H\times H \ni 
\left(
    \begin{array}{c}
      \varphi \\
      \mu
    \end{array}
  \right) \\[1mm]
&\hspace{6mm}\mapsto
\left(
    \begin{array}{c}
      -(-\Delta)_{\lambda}\varphi-\varphi-\frac{1}{\ep}a(\cdot)\varphi 
      + \frac{1}{\ep}J\ast\varphi -\frac{1}{\ep}G_{\ep}'(\varphi) 
      + \frac{1}{\ep}\mu
      \\[3mm]
      \frac{1}{\lambda}(-\Delta)_{\lambda}\varphi+\frac{1}{\lambda}\varphi
                  +\frac{1}{\ep\lambda}a(\cdot)\varphi 
                  -\frac{1}{\ep\lambda}J\ast\varphi  
                  +\frac{1}{\ep\lambda}G_{\ep}'(\varphi) 
        -\frac{1}{\ep\lambda}\mu-\frac{1}{\lambda}\mu
                                            -\frac{1}{\lambda}(-\Delta)_{\lambda}\mu
    \end{array}
  \right) 
\\[2mm]
&\hspace{14cm}\in H\times H. 
\end{align*}
Since 
\begin{align*}
&\|(-\Delta)_{\lambda}(\varphi - \psi)\|_{H} 
\leq \frac{1}{\lambda}\|\varphi-\psi\|_{H}, 
\\[1mm]
&\|a(\cdot)(\varphi - \psi)\|_{H} 
\leq \|a\|_{L^{\infty}(\Omega)}\|\varphi-\psi\|_{H}, 
\\[1mm] 
&\|J\ast(\varphi - \psi)\|_{H} 
\leq \|J\|_{L^1(\RN)}\|\varphi-\psi\|_{H}, 
\\[1mm] 
&\|G_{\ep}'(\varphi)-G_{\ep}'(\psi)\|_{H} 
\leq \left(\frac{1}{\ep}+ \|\pi'\|_{L^{\infty}(\mathbb{R})}\right)\|\varphi-\psi\|_{H} 
\end{align*}
for all $\varphi, \psi \in H$, 
the operator $L$ is Lipschitz continuous.     
Thus, by the Cauchy--Lipschitz--Picard theorem,  
there exists a unique classical solution 
$U=
    \left(
    \begin{array}{c}
      \varphi_{\ep, \lambda} \\
      \mu_{\ep, \lambda}
    \end{array}
  \right)
\in C^1([0, T]; H\times H)$ 
of \eqref{K}. 
Therefore we can obtain this lemma. 
\end{proof} 
%
\begin{lem}\label{estimatePeplam}
There exists $\ep_{1} \in (0, 1)$ such that for all $\ep\in(0, \ep_{1})$ 
there exists a constant $C=C(\ep, T)>0$ such that 
\begin{align}
&\|\varphi_{\ep, \lambda}(t)\|^2_{H} 
+ \int_{0}^{t}\|\partial_{t}\varphi_{\ep, \lambda}(s)\|^2_{H}\,ds 
+ \int_{0}^{t}\|\mu_{\ep, \lambda}(s)\|^2_{H}\,ds 
\leq C, \label{eseplam1} \\
&\lambda^2\int_{0}^{t}\|\partial_{t}\mu_{\ep, \lambda}(s)\|^2_{H}\,ds 
\leq C, \label{eseplam2} \\
&\int_{0}^{t}\|(-\Delta)_{\lambda}\mu_{\ep, \lambda}(s)\|^2_{H}\,ds 
\leq C, \label{eseplam3} \\
&\int_{0}^{t}\|(-\Delta)_{\lambda}\varphi_{\ep, \lambda}(s)\|^2_{H}\,ds 
\leq C \label{eseplam4} 
\end{align}
for all $t\in[0, T]$ and all $\lambda>0$.
\end{lem}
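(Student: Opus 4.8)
The plan is to produce a dissipative energy identity for \ref{Peplam} by pairing its two equations with suitable test functions, then — since the unboundedness of $\Omega$ destroys coercivity of the underlying free energy — to pass to a modified functional and close the bound via Gronwall's inequality, and finally to bootstrap the remaining estimates directly from the equations. Throughout, $\ep\in(0,\ep_1)$ with $\ep_1\in(0,1)$, and since only $\lambda\searrow0$ is needed in the sequel we restrict to $\lambda\in(0,1)$; every constant below may depend on $\ep$ and $T$. By Lemma \ref{existencePeplam}, $\varphi_{\ep,\lambda},\mu_{\ep,\lambda}\in C^1([0,T];H)$ and the two equations of \ref{Peplam} hold pointwise in $H$; since $(-\Delta)_\lambda$ is a bounded nonnegative self-adjoint operator on $H$, I would take the $H$-inner product of the first equation with $\mu_{\ep,\lambda}(t)$ and substitute, for the resulting term $(\partial_t\varphi_{\ep,\lambda},\mu_{\ep,\lambda})_H$, the expression obtained by pairing the second equation with $\partial_t\varphi_{\ep,\lambda}(t)$. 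Using $J(x)=J(-x)$ and $a(x)=\int_\Omega J(x-y)\,dy$ to identify $\tfrac{1}{2}(a\varphi_{\ep,\lambda},\varphi_{\ep,\lambda})_H-\tfrac{1}{2}(J\ast\varphi_{\ep,\lambda},\varphi_{\ep,\lambda})_H$ with $\tfrac{1}{4}\int_\Omega\int_\Omega J(x-y)(\varphi_{\ep,\lambda}(x,t)-\varphi_{\ep,\lambda}(y,t))^2\,dx\,dy$, and $(G_\ep'(\varphi_{\ep,\lambda}),\partial_t\varphi_{\ep,\lambda})_H$ with $\tfrac{d}{dt}\int_\Omega G_\ep(\varphi_{\ep,\lambda})$ (the latter integral being finite because $0\le\widehat{\beta_\ep}(r)\le\tfrac{1}{2\ep}r^2$), one arrives at
\[
\frac{d}{dt}\,\mathcal{E}_{\ep,\lambda}(t)+\ep\|\partial_t\varphi_{\ep,\lambda}(t)\|_H^2+\|(-\Delta)_\lambda^{1/2}\mu_{\ep,\lambda}(t)\|_H^2+\|\mu_{\ep,\lambda}(t)\|_H^2=0,
\]
where $\mathcal{E}_{\ep,\lambda}(t):=\tfrac{\lambda}{2}\|\mu_{\ep,\lambda}(t)\|_H^2+\tfrac{\ep}{2}\|(-\Delta)_\lambda^{1/2}\varphi_{\ep,\lambda}(t)\|_H^2+\tfrac{\ep}{2}\|\varphi_{\ep,\lambda}(t)\|_H^2+\tfrac{1}{4}\int_\Omega\int_\Omega J(x-y)(\varphi_{\ep,\lambda}(x,t)-\varphi_{\ep,\lambda}(y,t))^2\,dx\,dy+\int_\Omega G_\ep(\varphi_{\ep,\lambda}(t))$.

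The decisive difficulty, and the one point at which the unboundedness of $\Omega$ genuinely intervenes, is that $\mathcal{E}_{\ep,\lambda}$ is not coercive: where the bounded-domain theory would invoke an analogue of (H3) to make the free energy coercive, here the only lower bound at hand — using (A1), (A4) and $\widehat{\beta_\ep}\ge0$ together with $|\widehat{\pi}(r)|\le\tfrac{\|\pi'\|_{L^\infty(\mathbb{R})}}{2}r^2$ and $\|J\ast v\|_H\le\|J\|_{L^1(\RN)}\|v\|_H$ — is $\mathcal{E}_{\ep,\lambda}(t)\ge-c'\|\varphi_{\ep,\lambda}(t)\|_H^2$ with $c':=\tfrac{1}{2}\|J\|_{L^1(\RN)}+\tfrac{\|\pi'\|_{L^\infty(\mathbb{R})}}{2}$, so \eqref{eseplam1} cannot be read off directly. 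To get around this I would fix $K>c'$ and work with $\widetilde{\mathcal{E}}_{\ep,\lambda}(t):=\mathcal{E}_{\ep,\lambda}(t)+K\|\varphi_{\ep,\lambda}(t)\|_H^2$, which then obeys $\widetilde{\mathcal{E}}_{\ep,\lambda}(t)\ge(K-c')\|\varphi_{\ep,\lambda}(t)\|_H^2\ge0$. Adding $2K(\partial_t\varphi_{\ep,\lambda},\varphi_{\ep,\lambda})_H$ to the identity above and bounding this term by $\tfrac{\ep}{2}\|\partial_t\varphi_{\ep,\lambda}\|_H^2+\tfrac{2K^2}{\ep}\|\varphi_{\ep,\lambda}\|_H^2\le\tfrac{\ep}{2}\|\partial_t\varphi_{\ep,\lambda}\|_H^2+C_\ep\widetilde{\mathcal{E}}_{\ep,\lambda}(t)$ (with $C_\ep:=\tfrac{2K^2}{\ep(K-c')}$) gives
\[
\frac{d}{dt}\,\widetilde{\mathcal{E}}_{\ep,\lambda}(t)+\frac{\ep}{2}\|\partial_t\varphi_{\ep,\lambda}(t)\|_H^2+\|(-\Delta)_\lambda^{1/2}\mu_{\ep,\lambda}(t)\|_H^2+\|\mu_{\ep,\lambda}(t)\|_H^2\le C_\ep\,\widetilde{\mathcal{E}}_{\ep,\lambda}(t),
\]
and Gronwall's inequality yields $\widetilde{\mathcal{E}}_{\ep,\lambda}(t)\le e^{C_\ep T}\widetilde{\mathcal{E}}_{\ep,\lambda}(0)$. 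The initial value is bounded uniformly in $\lambda\in(0,1)$ by (A8): $\mu_{\ep,\lambda}(0)=\varphi_{\ep,\lambda}(0)=\varphi_{0\ep}$, \eqref{tool4} gives $\|(-\Delta)_\lambda^{1/2}\varphi_{0\ep}\|_H\le\|\varphi_{0\ep}\|_V$, Remark \ref{MYreg} gives $\int_\Omega G_\ep(\varphi_{0\ep})\le\int_\Omega G(\varphi_{0\ep})\le\|G(\varphi_{0\ep})\|_{L^1(\Omega)}$, and the double integral is $\le\|J\|_{L^1(\RN)}\|\varphi_{0\ep}\|_H^2$. Hence $\widetilde{\mathcal{E}}_{\ep,\lambda}(t)\le C$, so $\|\varphi_{\ep,\lambda}(t)\|_H^2\le C$ by coercivity; integrating the differential inequality in time then gives $\int_0^t(\ep\|\partial_t\varphi_{\ep,\lambda}\|_H^2+\|\mu_{\ep,\lambda}\|_H^2)\,ds\le C$, and dividing the first term by $\ep$ yields \eqref{eseplam1}.

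The remaining bounds follow mechanically from \ref{Peplam}. For \eqref{eseplam3}, I would take the $H$-inner product of the first equation with $((-\Delta)_\lambda+1)\mu_{\ep,\lambda}(t)\in H$; the $\partial_t\mu_{\ep,\lambda}$ term produces $\tfrac{\lambda}{2}\tfrac{d}{dt}\|((-\Delta)_\lambda+1)^{1/2}\mu_{\ep,\lambda}\|_H^2$ and the $\partial_t\varphi_{\ep,\lambda}$ term is absorbed by Young's inequality, so integration in time together with $\|((-\Delta)_\lambda+1)^{1/2}\mu_{\ep,\lambda}(0)\|_H^2\le\|\varphi_{0\ep}\|_V^2+\|\varphi_{0\ep}\|_H^2$ (finite by (A8)) and \eqref{eseplam1} gives $\int_0^t\|((-\Delta)_\lambda+1)\mu_{\ep,\lambda}\|_H^2\,ds\le C$, whence \eqref{eseplam3} on expanding the square. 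Then \eqref{eseplam2} is immediate from $\lambda\partial_t\mu_{\ep,\lambda}=-\partial_t\varphi_{\ep,\lambda}-((-\Delta)_\lambda+1)\mu_{\ep,\lambda}$ after squaring in $H$ and integrating. Finally, solving the second equation for $\ep((-\Delta)_\lambda+1)\varphi_{\ep,\lambda}$ and estimating the remaining terms in $L^2(0,T;H)$ by means of \eqref{eseplam1}, $\|a\|_{L^\infty(\Omega)}\le\|J\|_{L^1(\RN)}$, $\|J\ast\varphi_{\ep,\lambda}\|_H\le\|J\|_{L^1(\RN)}\|\varphi_{\ep,\lambda}\|_H$, and the Lipschitz bounds $\|\beta_\ep(\varphi_{\ep,\lambda})\|_H\le\tfrac{1}{\ep}\|\varphi_{\ep,\lambda}\|_H$, $\|\pi(\varphi_{\ep,\lambda})\|_H\le\|\pi'\|_{L^\infty(\mathbb{R})}\|\varphi_{\ep,\lambda}\|_H$, and then using $\|(-\Delta)_\lambda\varphi_{\ep,\lambda}\|_H\le\|((-\Delta)_\lambda+1)\varphi_{\ep,\lambda}\|_H+\|\varphi_{\ep,\lambda}\|_H$, gives \eqref{eseplam4}. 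The genuine obstacle is the coercivity defect handled in the second paragraph — it is also what forces the constant $C$ to depend on $\ep$ — while the rest is routine.
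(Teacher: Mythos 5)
Your argument is correct, and for the central estimate \eqref{eseplam1} it takes a genuinely different route from the paper. The paper also starts by pairing the first equation of \ref{Peplam} with $\mu_{\ep,\lambda}$ and the second with $\partial_t\varphi_{\ep,\lambda}$, but it does \emph{not} absorb the convolution term into the nonlocal Dirichlet form: it keeps $\tfrac12\|\sqrt{a(\cdot)}\varphi_{\ep,\lambda}\|_H^2+\int_\Omega G_\ep(\varphi_{\ep,\lambda})$ on the left, treats $(J\ast\varphi_{\ep,\lambda},\partial_t\varphi_{\ep,\lambda})_H$ by Young's inequality with weights $\tfrac{\|J\|_{L^1(\RN)}^2}{2\ep}$ and $\tfrac{\ep}{2}$, and then invokes the pointwise lower bound \eqref{siki4} on $G_\ep$ together with (A7) to make that sum coercive, $\geq\tfrac{c_0}{4}\|\varphi_{\ep,\lambda}\|_H^2$, once $\ep$ is small; this is precisely where the paper's $\ep_1$ is determined. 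You instead write the exact energy identity with the double-integral term, accept the non-coercive lower bound $\mathcal{E}_{\ep,\lambda}\geq-c'\|\varphi_{\ep,\lambda}\|_H^2$ (which uses only (A1), (A4) and $\widehat{\beta_\ep}\geq0$), and repair it by adding $K\|\varphi_{\ep,\lambda}\|_H^2$ and running Gronwall on the shifted functional. Your version buys independence from (A7) — any $\ep_1\in(0,1)$ works — at the price of a constant of order $e^{CT/\ep}$; the paper's version exploits (A7) to get coercivity directly but needs $\ep$ below an explicit threshold, and its constant degenerates in $\ep$ through the Young weight instead. Both yield $C=C(\ep,T)$, which is all the lemma claims. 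For the remaining bounds you reverse the paper's order (it derives \eqref{eseplam2} first, by computing $\lambda^2\|\partial_t\mu_{\ep,\lambda}\|_H^2$ from the first equation, and then reads off \eqref{eseplam3} from that same equation, whereas you test with $((-\Delta)_\lambda+1)\mu_{\ep,\lambda}$ to get \eqref{eseplam3} first); both orders close, and \eqref{eseplam4} is obtained identically. One small remark: your explicit restriction to $\lambda\in(0,1)$ is harmless — it is all that is needed for the passage $\lambda\searrow0$, and the paper's own constant $C_1$ in \eqref{siki3} likewise absorbs $\tfrac{\lambda}{2}\|\varphi_{0\ep}\|_H^2$ and so is only $\lambda$-uniform on bounded ranges of $\lambda$.
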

\begin{proof}
We see from the first equation in \ref{Peplam} that 
\begin{align}\label{siki1}
\frac{\lambda}{2}\frac{d}{dt}\|\mu_{\ep, \lambda}(t)\|_{H}^2 
+ (\partial_{t}\varphi_{\ep, \lambda}(t), \mu_{\ep, \lambda}(t))_{H} 
+ ((-\Delta)_{\lambda}\mu_{\ep, \lambda}(t), \mu_{\ep, \lambda}(t))_{H} 
+ \|\mu_{\ep, \lambda}(t)\|_{H}^2 =0
\end{align}
and the second equation in \ref{Peplam} yields that 
\begin{align}\label{siki2}
&(\partial_{t}\varphi_{\ep, \lambda}(t), \mu_{\ep, \lambda}(t))_{H}  
\\ \notag 
&= \frac{\ep}{2}\frac{d}{dt}
   (\|(-\Delta)_{\lambda}^{1/2}\varphi_{\ep, \lambda}(t)\|_{H}^2 
                                                          + \|\varphi_{\ep, \lambda}(t)\|_{H}^2) 
   + \frac{1}{2}\frac{d}{dt}\|\sqrt{a(\cdot)}\varphi_{\ep, \lambda}(t)\|_{H}^2 
\\ \notag
&\,\quad -(J\ast\varphi_{\ep, \lambda}(t), \partial_{t}\varphi_{\ep, \lambda}(t))_{H} 
+ \frac{d}{dt}\int_{\Omega} G_{\ep}(\varphi_{\ep, \lambda}(t)) 
+ \ep\|\partial_{t}\varphi_{\ep, \lambda}(t)\|_{H}^2. 
\end{align}
Thus it follows from 
\eqref{tool4}, \eqref{siki1}, \eqref{siki2}, (A8) and the Young inequality that 
\begin{align}\label{siki3}
&\frac{1}{2}\|\sqrt{a(\cdot)}\varphi_{\ep, \lambda}(t)\|_{H}^2 
+ \int_{\Omega}G_{\ep}(\varphi_{\ep, \lambda}(t)) 
+ \ep\int_{0}^{t}\|\partial_{t}\varphi_{\ep, \lambda}(s)\|_{H}^2\,ds 
+ \int_{0}^{t}\|\mu_{\ep, \lambda}(s)\|_{H}^2\,ds 
\\ \notag
&\leq C_{1} 
      + \int_{0}^{t}
        (J\ast\varphi_{\ep, \lambda}(s), \partial_{t}\varphi_{\ep, \lambda}(s))_{H}\,ds 
\\ \notag
&\leq C_{1} 
        + \frac{\|J\|_{L^1(\RN)}^2}{2\ep}
                            \int_{0}^{t}\|\varphi_{\ep, \lambda}(s)\|_{H}^2\,ds 
        + \frac{\ep}{2}\int_{0}^{t}\|\partial_{t}\varphi_{\ep, \lambda}(s)\|_{H}^2\,ds,  
\end{align}
where $C_1>0$.
Here we have from (A4)-(A6), Remark \ref{MYreg}, 
the mean value theorem and the Young inequality that 
\begin{align}\label{siki4}
G_{\ep}(r) 
            &= \frac{1}{2\ep}|r-J_{\ep}^{\beta}(r)|^2 + G(J_{\ep}^{\beta}(r)) 
                + \widehat{\pi}(r) - \widehat{\pi}(J_{\ep}^{\beta}(r)) \\ \notag 
            &\geq \frac{1}{2\ep}|r-J_{\ep}^{\beta}(r)|^2 
                -\frac{\|\pi'\|_{L^{\infty}(\mathbb{R})}}{2}r^2 
                -|\pi(\xi)||r-J_{\ep}^{\beta}(r)| \\ \notag
            &\geq \frac{1}{2\ep}|r-J_{\ep}^{\beta}(r)|^2 
                -\frac{\|\pi'\|_{L^{\infty}(\mathbb{R})}}{2}r^2 
                - \|\pi'\|_{L^{\infty}(\mathbb{R})}|\xi||r-J_{\ep}^{\beta}(r)| \\ \notag 
            &\geq \frac{1}{2\ep}|r-J_{\ep}^{\beta}(r)|^2 
                -\frac{\|\pi'\|_{L^{\infty}(\mathbb{R})}}{2}r^2 
                - 2\|\pi'\|_{L^{\infty}(\mathbb{R})}|r||r-J_{\ep}^{\beta}(r)| \\ \notag
            &\geq -\frac{\|\pi'\|_{L^{\infty}(\mathbb{R})}}{2}r^2 
                - 2\|\pi'\|_{L^{\infty}(\mathbb{R})}\ep r^2 
\end{align}
for all $r\in\mathbb{R}$ and all $\ep>0$, 
where $\xi$ is some constant 
belonging to $[r, J_{\ep}^{\beta}(r)]$ or $[J_{\ep}^{\beta}(r), r]$. 
Therefore, by combining \eqref{siki3}, \eqref{siki4} and (A7), 
there exists $\ep_{1}\in(0, 1)$ such that 
\begin{align*}
&\frac{c_{0}}{4}\|\varphi_{\ep, \lambda}(t)\|_{H}^2
+ \frac{\ep}{2}\int_{0}^{t}\|\partial_{t}\varphi_{\ep, \lambda}(s)\|_{H}^2\,ds 
+ \int_{0}^{t}\|\mu_{\ep, \lambda}(s)\|_{H}^2\,ds 
\\ \notag
&\leq C_{1} 
        + \frac{\|J\|_{L^1(\RN)}^2}{2\ep}
                            \int_{0}^{t}\|\varphi_{\ep, \lambda}(s)\|_{H}^2\,ds  
\end{align*}
for all $t\in[0, T]$, $\ep\in(0, \ep_{1})$ and $\lambda>0$, 
and hence for all $\ep \in (0, \ep_{1})$ 
there exists a constant $C_2=C_2(\ep, T)>0$ such that 
\begin{align}\label{siki5}
\|\varphi_{\ep, \lambda}(t)\|_{H}^2, 
\int_{0}^{t}\|\partial_{t}\varphi_{\ep, \lambda}(s)\|_{H}^2\,ds, 
\int_{0}^{t}\|\mu_{\ep, \lambda}(s)\|_{H}^2\,ds 
\leq C_2
\end{align}
for all $t\in[0, T]$ and all $\lambda>0$.  
The Young inequality and the first equation in \ref{Peplam} yield that 
\begin{align*}
&\lambda^2\|\partial_{t}\mu_{\ep, \lambda}(t)\|_{H}^2 
=\lambda^2(\partial_{t}\mu_{\ep, \lambda}(t), \partial_{t}\mu_{\ep, \lambda}(t))_{H}
\\
&= -\lambda(\partial_{t}\mu_{\ep, \lambda}(t), 
                                                      \partial_{t}\varphi_{\ep, \lambda}(t))_{H} 
   -\frac{\lambda}{2}\frac{d}{dt}
              (\|(-\Delta)_{\lambda}^{1/2}\mu_{\ep, \lambda}(t)\|_{H}^2 
                                                                   + \|\mu_{\ep, \lambda}(t)\|_{H}^2) 
\\ 
&\leq \frac{\lambda^2}{2}\|\partial_{t}\mu_{\ep, \lambda}(t)\|_{H}^2 
        + \frac{1}{2}\|\partial_{t}\varphi_{\ep, \lambda}(t)\|_{H}^2 
        -\frac{\lambda}{2}\frac{d}{dt}
              (\|(-\Delta)_{\lambda}^{1/2}\mu_{\ep, \lambda}(t)\|_{H}^2 
                                                                   + \|\mu_{\ep, \lambda}(t)\|_{H}^2)
\end{align*}
and then we derive from \eqref{tool4}, \eqref{siki5} and (A8) that 
for all $\ep \in (0, \ep_{1})$ there exists a constant $C_{3}=C_{3}(\ep, T)>0$ 
such that 
\begin{align}\label{siki6}
\lambda^2\int_{0}^{t}\|\partial_{t}\mu_{\ep, \lambda}(s)\|_{H}^2\,ds \leq C_{3}
\end{align}
for all $t\in[0, T]$ and all $\lambda>0$. 
We infer from 
\eqref{siki5}, \eqref{siki6}, the first and second equations in \ref{Peplam} 
that   
for all $\ep\in(0, \ep_{1})$
there exists a constant $C_{4}=C_{4}(\ep, T)>0$ such that 
\begin{align}\label{siki7}
\int_{0}^{t}\|(-\Delta)_{\lambda}\mu_{\ep, \lambda}(s)\|^2_{H}\,ds, 
\int_{0}^{t}\|(-\Delta)_{\lambda}\varphi_{\ep, \lambda}(s)\|^2_{H}\,ds 
\leq C_4
\end{align}
for all $t \in [0, T]$ and all $\lambda>0$.

Therefore 
combination of \eqref{siki5}-\eqref{siki7} 
means Lemma \ref{estimatePeplam}.  
\end{proof}
\begin{lem}\label{semisolPep}
Let $\ep_{1}$ be as in Lemma \ref{estimatePeplam}. 
Then for all $\ep \in (0, \ep_{1})$ 
there exist 
$\varphi_{\ep} \in H^1(0, T; H)\cap L^{\infty}(0, T; H)$ 
and $\mu_{\ep} \in L^2(0, T; H)$ 
satisfying \eqref{defsolep1}, \eqref{defsolep2} and \eqref{defsolep3}. 
\end{lem}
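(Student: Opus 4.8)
The plan is to pass to the limit as $\lambda = \lambda_j \searrow 0$ in the approximation \ref{Peplam}, for each fixed $\ep \in (0, \ep_1)$, combining the $\lambda$-uniform bounds of Lemma \ref{estimatePeplam} with the compactness result of Lemma \ref{keylemma2}. By \eqref{eseplam1}--\eqref{eseplam4} the families $\{\varphi_{\ep,\lambda}\}_\lambda$, $\{\partial_t\varphi_{\ep,\lambda}\}_\lambda$, $\{(-\Delta)_\lambda\varphi_{\ep,\lambda}\}_\lambda$, $\{\mu_{\ep,\lambda}\}_\lambda$, $\{(-\Delta)_\lambda\mu_{\ep,\lambda}\}_\lambda$ are bounded in $L^2(0, T; H)$ and $\{\varphi_{\ep,\lambda}\}_\lambda$ is bounded in $L^\infty(0, T; H)$, uniformly in $\lambda$. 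First I would extract a subsequence $\lambda = \lambda_j \searrow 0$ along which $\varphi_{\ep,\lambda} \to \varphi_\ep$ weakly$^{*}$ in $L^\infty(0, T; H)$ and weakly in $H^1(0, T; H)$, $\mu_{\ep,\lambda} \to \mu_\ep$ weakly in $L^2(0, T; H)$, and $\lambda\partial_t\mu_{\ep,\lambda} \to 0$ in the sense of distributions on $(0, T)$ into $H$ (since $\int_0^T(\lambda\partial_t\mu_{\ep,\lambda}, \psi)_H\,dt = -\lambda\int_0^T(\mu_{\ep,\lambda}, \partial_t\psi)_H\,dt \to 0$ for $\psi \in C_{\mathrm c}^{\infty}((0, T); H)$, using the $L^2(0, T; H)$-bound for $\mu_{\ep,\lambda}$). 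Applying Lemma \ref{keylemma2} to $v_\lambda = \varphi_{\ep,\lambda}$ then yields $\varphi_\ep \in L^2(0, T; W)$, $(-\Delta)_\lambda\varphi_{\ep,\lambda} \to -\Delta\varphi_\ep$ weakly in $L^2(0, T; H)$, and $\varphi_{\ep,\lambda} \to \varphi_\ep$ strongly in $L^2(0, T; L^2(\widetilde\Omega))$ for every bounded smooth $\widetilde\Omega \subset \Omega$; exhausting $\Omega$ by an increasing sequence of such subdomains and passing to a diagonal subsequence, I may assume $\varphi_{\ep,\lambda} \to \varphi_\ep$ a.e.\ in $\Omega \times (0, T)$. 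Finally, since $J_\lambda$ is a contraction on $H$ and $\|\nabla J_\lambda\mu_{\ep,\lambda}(t)\|_H^2 = (J_\lambda\mu_{\ep,\lambda}(t), (-\Delta)_\lambda\mu_{\ep,\lambda}(t))_H \le \|\mu_{\ep,\lambda}(t)\|_H\,\|(-\Delta)_\lambda\mu_{\ep,\lambda}(t)\|_H$, the bounds \eqref{eseplam1} and \eqref{eseplam3} make $\{J_\lambda\mu_{\ep,\lambda}\}_\lambda$ bounded in $L^2(0, T; V)$; because $J_\lambda\mu_{\ep,\lambda} - \mu_{\ep,\lambda} = -\lambda(-\Delta)_\lambda\mu_{\ep,\lambda} \to 0$ in $L^2(0, T; H)$ by \eqref{eseplam3}, its weak $L^2(0, T; V)$-limit must be $\mu_\ep$, so in fact $\mu_\ep \in L^2(0, T; V)$, which is what gives \eqref{defsolep1} a meaning.

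Next I would pass to the limit in the two equations of \ref{Peplam}. Testing the first equation against $\phi(t)v$ with $\phi \in C_{\mathrm c}^{\infty}(0, T)$ and $v \in W$, writing $((-\Delta)_\lambda\mu_{\ep,\lambda}(t), v)_H = (\mu_{\ep,\lambda}(t), (-\Delta)_\lambda v)_H$ and using the strong convergence $(-\Delta)_\lambda v \to -\Delta v$ in $H$, the weak convergences above together with $\lambda\partial_t\mu_{\ep,\lambda} \to 0$, and Green's formula $(\mu_\ep(t), -\Delta v)_H = (\nabla\mu_\ep(t), \nabla v)_{(L^2(\Omega))^N}$ (valid since $\mu_\ep(t) \in V$, $v \in W$ and $\partial\Omega$ is smooth and bounded), I obtain $(\partial_t\varphi_\ep(t), v)_H + (\mu_\ep(t), v)_V = 0$ for a.a.\ $t \in (0, T)$ and all $v \in W$, hence, by density of $W$ in $V$ and continuity of each term in $v$, for all $v \in V$; this is \eqref{defsolep1}. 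Regarding the second equation as an identity in $L^2(0, T; H)$, each linear term converges weakly in $L^2(0, T; H)$ to the corresponding term in $\varphi_\ep$: $a(\cdot)\varphi_{\ep,\lambda} \rightharpoonup a(\cdot)\varphi_\ep$ because $a \in L^\infty(\Omega)$ by (A1), $J\ast\varphi_{\ep,\lambda} \rightharpoonup J\ast\varphi_\ep$ because convolution with $J \in L^1(\RN)$ is bounded on $H$, while $\ep((-\Delta)_\lambda + 1)\varphi_{\ep,\lambda} \rightharpoonup \ep(-\Delta + 1)\varphi_\ep$ and $\ep\partial_t\varphi_{\ep,\lambda} \rightharpoonup \ep\partial_t\varphi_\ep$ follow from the first paragraph. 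For the potential term, $G_\ep' = \beta_\ep + \pi$ is Lipschitz with $G_\ep'(0) = 0$ (by (A3), (A4) and Remark \ref{MYreg}), so $\{G_\ep'(\varphi_{\ep,\lambda})\}_\lambda$ is bounded in $L^2(0, T; H)$ by \eqref{eseplam1}, and its weak $L^2(0, T; H)$-limit, restricted to any bounded $\widetilde\Omega$, equals $G_\ep'(\varphi_\ep)$ because $\varphi_{\ep,\lambda} \to \varphi_\ep$ strongly in $L^2(0, T; L^2(\widetilde\Omega))$; hence the limit is $G_\ep'(\varphi_\ep)$ a.e.\ on $\Omega \times (0, T)$, and passing to the limit gives \eqref{defsolep2}. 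Finally, $H^1(0, T; H) \hookrightarrow C([0, T]; H)$ and the weak $H^1$-convergence give $\varphi_{\ep,\lambda}(0) \rightharpoonup \varphi_\ep(0)$ in $H$, and since $\varphi_{\ep,\lambda}(0) = \varphi_{0\ep}$ this yields \eqref{defsolep3}.

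The hardest step will be the passage to the limit in the potential term $G_\ep'(\varphi_{\ep,\lambda})$, which requires strong (a.e.) convergence of $\varphi_{\ep,\lambda}$: on the unbounded domain $\Omega$ there is no global compact embedding, so this is carried entirely by Lemma \ref{keylemma2} (strong convergence on each bounded subdomain) together with the exhaustion/diagonal argument, which is exactly what replaces the global compactness used in the bounded-domain literature. A secondary subtlety is that identifying the limit of $(-\Delta)_\lambda\mu_{\ep,\lambda}$ against test functions from $V$ forces one to first upgrade $\mu_\ep$ from $L^2(0, T; H)$ to $L^2(0, T; V)$ via the $J_\lambda$-regularization estimate above, before \eqref{defsolep1} can even be written down; the remaining passages to the limit are then routine weak/strong-continuity arguments.
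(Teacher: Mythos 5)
Your proposal is correct and follows the same overall architecture as the paper: extract weak limits from the $\lambda$-uniform bounds of Lemma \ref{estimatePeplam}, use Lemma \ref{keylemma2} to get strong convergence of $\varphi_{\ep,\lambda}$ on bounded subdomains so as to pass to the limit in the Lipschitz nonlinearity $G_\ep'$ against compactly supported test functions, and pass to the weak limit in the linear terms. The one place where you genuinely diverge is the initial condition \eqref{defsolep3}: the paper proves it via Lemma \ref{keylemma}, i.e.\ by showing $J_1^{1/2}\varphi_{\ep,\lambda}\to J_1^{1/2}\varphi_\ep$ in $C([0,T];L^2(\widetilde\Omega))$ through the smoothing estimate \eqref{tool3} and the Aubin--Lions--Simon lemma, and then removing $J_1^{1/2}$; you instead observe that the trace map $v\mapsto v(0)$ is bounded linear from $H^1(0,T;H)$ to $H$, hence weak--weak continuous, so $\varphi_\ep(0)$ is the weak limit of the constant sequence $\varphi_{0\ep}$. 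Your route is shorter and bypasses Lemma \ref{keylemma} entirely (that lemma is not needed elsewhere in this proof), at the cost of not producing the locally uniform-in-time convergence that the $J_1^{1/2}$ argument yields as a by-product. Two smaller variations are also fine: you identify $\lambda\partial_t\mu_{\ep,\lambda}\to 0$ distributionally via the $L^2(0,T;H)$-bound on $\mu_{\ep,\lambda}$ (the paper asserts this directly from \eqref{eseplam2}), and you upgrade $\mu_\ep$ to $L^2(0,T;V)$ through the $J_\lambda$-regularization estimate, whereas the paper gets $\mu_\ep\in L^2(0,T;W)$ at once from the demiclosedness of the Neumann Laplacian together with the weak convergence of $(-\Delta)_\lambda\mu_{\ep,\lambda}$; both are legitimate.
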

\begin{proof}
Let $\ep\in(0, \ep_{1})$.  
The estimates \eqref{eseplam1}-\eqref{eseplam4} yield that 
there exist some functions 
$\varphi_{\ep} \in H^1(0, T; H) \cap L^2(0, T; W)$, 
$\mu_{\ep} \in L^2(0, T; W)$ satisfying  
\begin{align}
&\varphi_{\ep, \lambda} \to \varphi_{\ep} 
\quad \mbox{weakly in}\ L^2(0, T; H), 
\label{Kuri1}\\
&\partial_{t}\varphi_{\ep, \lambda} \to \partial_{t}\varphi_{\ep}  
\quad \mbox{weakly in}\ L^2(0, T; H), 
\label{Kuri2}\\
&(-\Delta)_{\lambda}\varphi_{\ep, \lambda} \to 
-\Delta \varphi_{\ep} \quad \mbox{weakly in}\ L^2(0, T; H), 
\label{Kuri3}\\
&\mu_{\ep, \lambda} \to \mu_{\ep} \quad \mbox{weakly in}\ L^2(0, T; H), 
\label{Kuri4}\\
&\lambda\partial_{t}\mu_{\ep, \lambda} \to 0 \quad \mbox{weakly in}\ L^2(0, T; H),  \label{Kuri5} \\
&(-\Delta)_{\lambda}\mu_{\ep, \lambda} \to 
-\Delta \mu_{\ep} \quad \mbox{weakly in}\ L^2(0, T; H) 
\label{Kuri6}
\end{align}
as $\lambda = \lambda_j \searrow0$. 
We can obtain \eqref{defsolep1} 
by \eqref{Kuri2}, \eqref{Kuri4}, \eqref{Kuri5} and \eqref{Kuri6}. 
Now we show \eqref{defsolep2}. 
To verify \eqref{defsolep2} it suffices to confirm that 
for all $\psi \in C_{\mathrm c}^{\infty}([0, T] \times \Omega)$,  
\begin{align}\label{henbun}
&\int_0^T \Bigl(\int_{\Omega} 
     \bigl(
     \mu_{\ep}(t)-\ep(-\Delta+1)\varphi_{\ep}(t) - a(\cdot)\varphi_{\ep}(t) 
\\ \notag
     &\hspace{20mm}+ J\ast\varphi_{\ep}(t) - G_{\ep}'(\varphi_{\ep}(t)) 
                                                             - \ep\partial_{t}\varphi_{\ep}(t)
     \bigr)\psi(t) \Bigr)\,dt 
= 0. 
\end{align}
From the second equation in \ref{Peplam} 
we infer    
\begin{align}\label{cute1}
&0=\int_{0}^{T} 
(\mu_{\ep, \lambda}(t)-\ep((-\Delta)_{\lambda}+1)\varphi_{\ep, \lambda}(t) 
 - a(\cdot)\varphi_{\ep, \lambda}(t) 
 \\ \notag
     &\hspace{20mm}+ J\ast\varphi_{\ep, \lambda}(t) 
                            - G_{\ep}'(\varphi_{\ep, \lambda}(t)) 
                                 - \ep\partial_{t}\varphi_{\ep, \lambda}(t), \psi(t))_{H}\,dt 
\\[2mm] \notag
&\hspace{0.4em}=\int_{0}^{T} 
(\mu_{\ep, \lambda}(t)-\ep((-\Delta)_{\lambda}+1)\varphi_{\ep, \lambda}(t) 
 - a(\cdot)\varphi_{\ep, \lambda}(t) 
 - \ep\partial_{t}\varphi_{\ep, \lambda}(t), \psi(t))_{H}\,dt 
\\ \notag 
&\,\quad+\int_{0}^{T} (\varphi_{\ep, \lambda}(t), J\ast\psi(t))_{H}\,dt 
- \int_{0}^{T} (G_{\ep}'(\varphi_{\ep, \lambda}(t)), \psi(t))_{H}\,dt
\end{align}
hold. 
Here there exists a bounded domain $B\subset\Omega$ with smooth boundary 
such that 
\begin{align*}
\mbox{supp}\,\psi \subset B\times(0, T).   
\end{align*}
It follows from \eqref{eseplam1}, \eqref{eseplam4} and Lemma \ref{keylemma2} 
that 
\begin{align}\label{tuyoi}
\varphi_{\ep, \lambda} \to \varphi_{\ep} \quad \mbox{in}\ 
L^2(0, T; L^2(B))
\end{align}
as $\lambda = \lambda_j \searrow0$. 
Since $G_{\ep}'=\beta_{\ep}+\pi$ is Lipschitz continuous, 
we see from \eqref{tuyoi} that 
\begin{align}\label{cute2}
\int_{0}^{T} (G_{\ep}'(\varphi_{\ep, \lambda}(t)), \psi(t))_{H}\,dt 
&= \int_0^T \Bigl(\int_{B} G_{\ep}'(\varphi_{\ep, \lambda}(t))\psi(t) \Bigr)\,dt 
\\ \notag 
&\to \int_0^T \Bigl(\int_{B} G_{\ep}'(\varphi_{\ep}(t))\psi(t) \Bigr)\,dt  
\\ \notag
&= \int_{0}^{T} (G_{\ep}'(\varphi_{\ep}(t)), \psi(t))_{H}\,dt
\end{align}
as $\lambda = \lambda_j \searrow0$. 
Thus \eqref{Kuri1}-\eqref{Kuri4}, 
\eqref{cute1} and \eqref{cute2} lead to \eqref{henbun}.  

Next we prove \eqref{defsolep3}. 
Let $\widetilde{\Omega} \subset \Omega$ be 
an arbitrary bounded domain with smooth boundary.  
From \eqref{eseplam1} and Lemma \ref{keylemma} we have 
\begin{equation}\label{itiyou}
J^{1/2}_1\varphi_{\ep, \lambda} \to 
J^{1/2}_1\varphi_{\ep} \quad \mbox{in}\ C([0, T]; L^2(\widetilde{\Omega}))
\end{equation}
as $\lambda = \lambda_j \searrow0$. 
Therefore, 
since $\varphi_{\ep, \lambda}(0)=\varphi_{0\ep}$, 
\eqref{itiyou} yields that 
$$
J^{1/2}_1\varphi_{\ep}(0)=J^{1/2}_1\varphi_{0\ep} 
\quad \mbox{a.e.\ in}\ \widetilde{\Omega}.
$$
Because $\widetilde{\Omega} \subset \Omega$ is arbitrary, we conclude that 
$$
J^{1/2}_1\varphi_{\ep}(0)=J^{1/2}_1\varphi_{0\ep} 
\quad \mbox{a.e.\ in}\ \Omega.
$$ 
Thus, since $J^{1/2}_1\varphi_{0\ep} \in H$, we see that 
$$
J^{1/2}_1\varphi_{\ep}(0)=J^{1/2}_1\varphi_{0\ep} 
\quad \mbox{in}\ H, 
$$
that is, \eqref{defsolep3} holds. 
\end{proof}
\begin{lem}\label{estimatesemisolPep}
Let $\ep_{1}$ be as in Lemma \ref{estimatePeplam} 
and let $\varphi_{\ep}$ and $\mu_{\ep}$ be as in Lemma \ref{semisolPep}. 
Then 
there exists $\ep_{2}\in(0, \ep_{1})$ such that 
for all $\ep\in(0, \ep_{2})$,   
     \begin{equation*}
        \varphi_{\ep} \in H^1(0, T; H)\cap L^{\infty}(0, T; V)\cap L^2(0, T; W),
         \quad \mu_{\ep} \in L^2(0, T; V),  
     \end{equation*}
 and there exists a constant $C=C(T)>0$ 
 such that 
     \begin{align}
     &\|\varphi_{\ep}(t)\|_{H}^2 + \ep\|\varphi_{\ep}(t)\|^2_{V} 
       + \int_{0}^{t}\|\mu_{\ep}(s)\|^2_{V}\,ds  \label{esep1} \\[-0.5mm] 
      &+ \ep\int_{0}^{t}\|(-\Delta+1)\varphi_{\ep}(s)\|^2_{H}\,ds 
       + \ep\int_{0}^{t}\|\partial_{t}\varphi_{\ep}(s)\|^2_{H}\,ds 
     \leq C,  \notag
     \\[1mm]
     &\int_{0}^{t}\|\varphi_{\ep}(s)\|^2_{V}\,ds 
     \leq C, \label{esep2} 
     \\
     &\int_{0}^{t}\|\partial_{t}\varphi_{\ep}(s)\|^2_{V^{*}}\,ds \leq C, \label{esep3} 
     \\
     &\int_{0}^{t}\|\beta_{\ep}(\varphi_{\ep}(s))\|_{H}^2\,ds 
     \leq C \label{esep4}
     \end{align}
 for all $t\in[0, T]$ and all $\ep\in(0, \ep_{2})$. 
\end{lem}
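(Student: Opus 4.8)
The plan is to work directly with the weak solution $(\varphi_\ep,\mu_\ep)$ of \ref{Pep} produced in Lemma~\ref{semisolPep}: the regularity recorded there, $\varphi_\ep\in H^1(0,T;H)\cap L^2(0,T;W)$ and $\mu_\ep\in L^2(0,T;W)$, already makes every manipulation below rigorous. (Equivalently one may run the same computations on the classical solutions $(\varphi_{\ep,\lambda},\mu_{\ep,\lambda})$ of \ref{Peplam}, obtain bounds uniform in both $\lambda$ and $\ep\in(0,\ep_2)$, and pass to the limit $\lambda\searrow0$ using \eqref{Kuri1}--\eqref{Kuri6} and weak/weak$^{*}$ lower semicontinuity.) The decisive structural feature is that the ``$+1$'' in $-\Delta+1$ makes the test of \eqref{defsolep1} by $\mu_\ep$ produce the \emph{full} norm $\|\mu_\ep(t)\|_V^2$, so that the $L^2(0,T;V)$-bound on $\mu_\ep$ comes for free and the Poincar\'e--Wirtinger inequality (unavailable on the unbounded $\Omega$) is never needed.

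\emph{Step 1 (basic energy estimate).} I would test \eqref{defsolep1} with $v=\mu_\ep(t)$, obtaining $(\partial_t\varphi_\ep,\mu_\ep)_H+\|\mu_\ep\|_V^2=0$, then insert \eqref{defsolep2} and use $(\partial_t\varphi_\ep,\ep(-\Delta+1)\varphi_\ep)_H=\tfrac{\ep}{2}\tfrac{d}{dt}\|\varphi_\ep\|_V^2$, $(\partial_t\varphi_\ep,a(\cdot)\varphi_\ep)_H=\tfrac12\tfrac{d}{dt}\|\sqrt{a(\cdot)}\varphi_\ep\|_H^2$, $(\partial_t\varphi_\ep,G_\ep'(\varphi_\ep))_H=\tfrac{d}{dt}\int_\Omega G_\ep(\varphi_\ep)$ and — the key rewriting — $-(\partial_t\varphi_\ep,J\ast\varphi_\ep)_H=-\tfrac12\tfrac{d}{dt}(J\ast\varphi_\ep,\varphi_\ep)_H$ (by $J(x)=J(-x)$), which yields
\begin{equation*}
\frac{d}{dt}\Bigl[\tfrac{\ep}{2}\|\varphi_\ep\|_V^2+E_\ep(\varphi_\ep)\Bigr]+\ep\|\partial_t\varphi_\ep\|_H^2+\|\mu_\ep\|_V^2=0,
\end{equation*}
where $E_\ep$ is the functional $E$ of Theorem~\ref{maintheorem3} with $G$ replaced by $G_\ep$. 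Integrating, the data are controlled uniformly in $\ep$ by {\rm (A8)} (note $\widehat{\beta_\ep}\le\widehat{\beta}$, so $\int_\Omega G_\ep(\varphi_{0\ep})\le\int_\Omega G(\varphi_{0\ep})\le c_2$, and $\tfrac14\iint J(x-y)(\varphi_{0\ep}(x)-\varphi_{0\ep}(y))^2\,dxdy\le\|J\|_{L^1(\RN)}\|\varphi_{0\ep}\|_H^2\le\|J\|_{L^1(\RN)}c_2$). Moreover $\tfrac12\|\sqrt{a(\cdot)}\varphi_\ep\|_H^2+\int_\Omega G_\ep(\varphi_\ep)\ge\tfrac12\int_\Omega(a(\cdot)-\|\pi'\|_{L^{\infty}(\mathbb{R})})\varphi_\ep^2\ge\tfrac{c_0}{2}\|\varphi_\ep\|_H^2$, using {\rm (A7)}, $\widehat{\beta_\ep}\ge0$ and $\widehat{\pi}(r)\ge-\tfrac{\|\pi'\|_{L^{\infty}(\mathbb{R})}}{2}r^2$.

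\emph{Step 2 (closing the $L^{\infty}(0,T;H)$-bound — the main obstacle).} The quantity $E_\ep(\varphi_\ep(t))$ is not coercive on $H$ — on an unbounded domain $E_\ep$ is not even bounded below — so Step~1 alone does not control $\|\varphi_\ep(t)\|_H$. I would supplement it with a $V^{*}$-level estimate: since \eqref{defsolep1} says $\partial_t\varphi_\ep=-F\mu_\ep$ in $V^{*}$, testing in $V^{*}$ with $\varphi_\ep$ and using \eqref{defF}, \eqref{innerVstar} gives $\tfrac12\tfrac{d}{dt}\|\varphi_\ep\|_{V^{*}}^2+(\mu_\ep,\varphi_\ep)_H=0$; substituting \eqref{defsolep2} and using $\beta_\ep(r)r\ge0$, $|\pi(r)r|\le\|\pi'\|_{L^{\infty}(\mathbb{R})}r^2$, {\rm (A7)} and $\|J\ast\varphi_\ep\|_H\le\|J\|_{L^1(\RN)}\|\varphi_\ep\|_H$ gives
\begin{equation*}
\tfrac12\tfrac{d}{dt}\|\varphi_\ep\|_{V^{*}}^2+\tfrac{\ep}{2}\tfrac{d}{dt}\|\varphi_\ep\|_H^2+\ep\|\varphi_\ep\|_V^2\le\bigl(\|J\|_{L^1(\RN)}-c_0\bigr)_+\|\varphi_\ep\|_H^2\le c_1\|\varphi_\ep\|_H^2 .
\end{equation*}
This is precisely where {\rm (A7)} is calibrated: $c_0+\|\pi'\|_{L^{\infty}(\mathbb{R})}\le\inf_\Omega a\le\|J\|_{L^1(\RN)}<c_0+c_1$ with $c_1<c_0/2$, so the nonlocal term only barely fails to be absorbed. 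Combining this with the identity of Step~1 (using $\|\varphi_\ep\|_H^2\le\|\varphi_\ep\|_{V^{*}}\|\varphi_\ep\|_V$ and the coercivity of Step~1), bounding the data by {\rm (A8)}, and applying Gronwall's lemma should give $\sup_{t\in[0,T]}\|\varphi_\ep(t)\|_H^2\le C$ with $C$ independent of $\ep$ — provided $\ep$ is small enough, which is why $\ep_2$ may have to be taken strictly below $\ep_1$. I expect this combined-Gronwall bookkeeping — tracking the $\ep$-weights so that no constant blows up as $\ep\searrow0$ — to be the genuinely delicate point.

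\emph{Step 3 (the remaining bounds).} Granting $\|\varphi_\ep\|_{L^{\infty}(0,T;H)}\le C$ uniformly, Step~1 at once gives the $\int_0^t\|\mu_\ep\|_V^2$, $\ep\|\varphi_\ep(t)\|_V^2$ and $\ep\int_0^t\|\partial_t\varphi_\ep\|_H^2$ parts of \eqref{esep1}; since $\|\partial_t\varphi_\ep(t)\|_{V^{*}}=\|F\mu_\ep(t)\|_{V^{*}}=\|\mu_\ep(t)\|_V$, \eqref{esep3} follows. Testing \eqref{defsolep2} with $(-\Delta+1)\varphi_\ep(t)\in H$, integrating by parts, and using the ``$(\mathrm{H2})$-type'' bound $a(\cdot)+G_\ep''(\varphi_\ep)=a(\cdot)+\beta_\ep'(\varphi_\ep)+\pi'(\varphi_\ep)\ge c_0$ (from $\beta_\ep'\ge0$, {\rm (A4)}, {\rm (A7)}), together with $\|(\nabla J)\ast\varphi_\ep\|_H\le\|\nabla J\|_{L^1(\RN)}\|\varphi_\ep\|_H$, $\|\nabla a\|_{L^{\infty}(\Omega)}\le\|\nabla J\|_{L^1(\RN)}$, the Young inequality and the bounds already obtained, one gets $\int_0^t\|\varphi_\ep\|_V^2\le C$ (i.e.\ \eqref{esep2}) and $\ep\int_0^t\|(-\Delta+1)\varphi_\ep\|_H^2\le C$ (the last term of \eqref{esep1}); in particular $\varphi_\ep\in L^{\infty}(0,T;V)\cap L^2(0,T;W)$ and $\mu_\ep\in L^2(0,T;V)$. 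Finally, reading \eqref{defsolep2} as $\beta_\ep(\varphi_\ep)=\mu_\ep-\ep(-\Delta+1)\varphi_\ep-a(\cdot)\varphi_\ep+J\ast\varphi_\ep-\pi(\varphi_\ep)-\ep\partial_t\varphi_\ep$ and using $\ep<1$ together with the preceding estimates gives \eqref{esep4}.
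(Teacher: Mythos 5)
Your Steps 1 and 3 reproduce the paper's argument almost verbatim: the energy identity obtained by testing \eqref{defsolep1} with $\mu_\ep$ and inserting \eqref{defsolep2} is the paper's \eqref{shun1}--\eqref{shun3}; testing \eqref{defsolep2} with $(-\Delta+1)\varphi_\ep$ and using $a+\beta_\ep'+\pi'\ge c_0$ is exactly how the paper derives \eqref{shun4}; and the bounds \eqref{esep3}, \eqref{esep4} are obtained the same way. The problem is Step 2, which you yourself flag as unverified, and which as written does not close.

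The obstacle is the cross term: the energy of Step 1 contains $-\tfrac12(\varphi_\ep,J\ast\varphi_\ep)_H$, and your partial coercivity $\tfrac12\|\sqrt{a}\varphi_\ep\|_H^2+\int_\Omega G_\ep(\varphi_\ep)\ge\tfrac{c_0}{2}\|\varphi_\ep\|_H^2$ leaves it untouched; bounding it crudely by $-\tfrac{\|J\|_{L^1(\RN)}}{2}\|\varphi_\ep\|_H^2$ gives only $E_\ep\ge-\tfrac{c_1}{2}\|\varphi_\ep\|_H^2$, which is useless, since (A7) allows $\|J\|_{L^1(\RN)}>c_0$. Your $V^{*}$-level estimate then leaves a term $c_1\|\varphi_\ep\|_H^2$ on the right whose only available absorbers are $\ep\|\varphi_\ep\|_V^2$ and $\tfrac{d}{dt}(\tfrac12\|\varphi_\ep\|_{V^{*}}^2+\tfrac{\ep}{2}\|\varphi_\ep\|_H^2)$; the interpolation $\|\varphi_\ep\|_H^2\le\|\varphi_\ep\|_{V^{*}}\|\varphi_\ep\|_V$ that you invoke forces a Young splitting whose $V$-part must be paid against an $\ep$-weighted quantity, so every constant it produces carries a factor $\ep^{-1}$ and the Gronwall bound blows up as $\ep\searrow0$. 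Two repairs are available. The paper's own device is to add $c_1$ times the $H$-level identity \eqref{shun2} (not the $V^{*}$-level one), so that the modified energy acquires the summand $c_1\|\varphi_\ep\|_H^2$ and becomes coercive by \eqref{shun6}, precisely because $c_0+c_1>\|J\|_{L^1(\RN)}$; the price is the term $-c_1(\mu_\ep,\varphi_\ep)_V$, which is absorbed into $\|\mu_\ep\|_V^2$ via the auxiliary inequality \eqref{shun4} — established \emph{before} the Gronwall step, not after as in your Step 3 — and this is exactly where $c_1<c_0/2$ (i.e.\ $\tfrac{2c_1^2}{c_0^2}<\tfrac12$) is used. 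Alternatively, your $V^{*}$-route can be salvaged by estimating the convolution term by duality, $(\varphi_\ep,J\ast\varphi_\ep)_H=\langle\varphi_\ep,J\ast\varphi_\ep\rangle_{V^{*},V}\le\|\varphi_\ep\|_{V^{*}}\|J\ast\varphi_\ep\|_V\le C\|J\|_{W^{1,1}(\RN)}\|\varphi_\ep\|_{V^{*}}\|\varphi_\ep\|_H$ (the trick the paper uses in \eqref{pen4}), which yields $\|\varphi_\ep(t)\|_H^2\le C+C\|\varphi_\ep(t)\|_{V^{*}}^2$ from Step 1 and then closes by Gronwall against your $V^{*}$ inequality; but some such handling of the nonlocal term, absent from your write-up, is the actual content of the lemma.
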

\begin{proof}
Let $\ep\in(0, \ep_{1})$.  
It follows from \eqref{defsolep1} that 
\begin{align}\label{shun1}
(\partial_{t}\varphi_{\ep}(t), \mu_{\ep}(t))_{H} + \|\mu_{\ep}(t)\|_{V}^2 = 0  
\end{align}
and 
\begin{align}\label{shun2}
\frac{1}{2}\frac{d}{dt}\|\varphi_{\ep}(t)\|_{H}^2 
+ (\mu_{\ep}(t), \varphi_{\ep}(t))_{V} = 0,  
\end{align}
and from \eqref{defsolep2} that 
\begin{align}\label{shun3}
&(\partial_{t}\varphi_{\ep}(t), \mu_{\ep}(t))_{H} 
\\ \notag
&= (\partial_{t}\varphi_{\ep}(t), 
               \ep(-\Delta+1)\varphi_{\ep}(t)+a(\cdot)\varphi_{\ep}(t) 
               -J\ast\varphi_{\ep}(t) + G_{\ep}'(\varphi_{\ep}(t)) 
                                                           +\ep\partial_{t}\varphi_{\ep}(t))_{H} 
\\ \notag
&= \frac{d}{dt}
        \left(\frac{\ep}{2}\|\varphi_{\ep}(t)\|_{V}^2
               + \frac{1}{2}\|\sqrt{a(\cdot)}\varphi_{\ep}(t)\|_{H}^2 
               - \frac{1}{2}(\varphi_{\ep}(t), J\ast\varphi_{\ep}(t))_{H}
               + \int_{\Omega} G_{\ep}(\varphi_{\ep}(t)) \right)  
    \\ \notag
    &\,\quad+ \ep\|\partial_{t}\varphi_{\ep}(t)\|_{H}^2              
\\ \notag
&=\frac{d}{dt}
        \left(\frac{\ep}{2}\|\varphi_{\ep}(t)\|_{V}^2
               +\frac{1}{4}\int_{\Omega}\int_{\Omega}
                    J(x-y)(\varphi_{\ep}(x)-\varphi_{\ep}(y))^2dxdy 
               + \int_{\Omega} G_{\ep}(\varphi_{\ep}(t)) \right) 
\\ \notag 
      &\,\quad+ \ep\|\partial_{t}\varphi_{\ep}(t)\|_{H}^2.                
\end{align}
Here we derive 
from the Young inequality, the monotonicity of $\beta_{\ep}$, 
\eqref{defsolep2} and (A7) that 
\begin{align*}
&\frac{c_{0}}{4}\|\varphi_{\ep}(t)\|_{V}^2 + \frac{1}{c_{0}}\|\mu_{\ep}(t)\|_{V}^2 
\\ \notag
&\geq 
(\mu_{\ep}(t), \varphi_{\ep}(t))_{V} 
= (\mu_{\ep}(t), (-\Delta+1)\varphi_{\ep}(t))_{H} \\ \notag
&\geq \ep\|(-\Delta+1)\varphi_{\ep}(t)\|_{H}^2 
     + (c_{0}+\|\pi'\|_{L^{\infty}(\mathbb{R})})\|\varphi_{\ep}(t)\|_{V}^2
     + \int_{\Omega} \varphi_{\ep}(t)\nabla\varphi_{\ep}(t)\cdot\nabla a 
\\ \notag 
     &\,\quad -(J\ast\varphi_{\ep}(t), (-\Delta+1)\varphi_{\ep}(t))_{H} 
         -\|\pi'\|_{L^{\infty}(\mathbb{R})}\|\varphi_{\ep}(t)\|_{V}^2 
     + \frac{\ep}{2}\frac{d}{dt}\|\varphi_{\ep}(t)\|_{V}^2   
\\ \notag 
&\geq \ep\|(-\Delta+1)\varphi_{\ep}(t)\|_{H}^2 
         + \frac{c_{0}}{2}\|\varphi_{\ep}(t)\|_{V}^2 
         - \left(\frac{2}{c_{0}}\|\nabla J\|_{L^1(\RN)}^2  
            + \|J\|_{L^1(\RN)}\right)\|\varphi_{\ep}(t)\|_{H}^2 
\\ \notag 
 &\,\quad+\frac{\ep}{2}\frac{d}{dt}\|\varphi_{\ep}(t)\|_{V}^2,   
\end{align*}
and hence 
\begin{align}\label{shun4}
\|\varphi_{\ep}(t)\|_{V}^2 
&\leq \frac{4}{c_{0}^2}\|\mu_{\ep}(t)\|_{V}^2 
       - \frac{4}{c_{0}}\ep\|(-\Delta+1)\varphi_{\ep}(t)\|_{H}^2 
\\ \notag
      &\,\quad+ \left(\frac{8}{c_{0}^2}\|\nabla J\|_{L^1(\RN)}^2  
            + \frac{4}{c_{0}}\|J\|_{L^1(\RN)}\right)\|\varphi_{\ep}(t)\|_{H}^2 
     -\frac{2}{c_{0}}\ep\frac{d}{dt}\|\varphi_{\ep}(t)\|_{V}^2. 
\end{align}
From \eqref{shun1}-\eqref{shun4} 
we can obtain  
\begin{align*}
&\frac{\ep}{2}\frac{d}{dt}\|\varphi_{\ep}(t)\|_{V}^2
  + \|\mu_{\ep}(t)\|_{V}^2 + \ep\|\partial_{t}\varphi_{\ep}(t)\|_{H}^2
\\ \notag
&+ \frac{1}{2}\frac{d}{dt} 
       \left(\frac{1}{2}\int_{\Omega}\int_{\Omega}
                                   J(x-y)(\varphi_{\ep}(x)-\varphi_{\ep}(y))^2\,dxdy 
              + 2\int_{\Omega}G_{\ep}(\varphi_{\ep}(t)) 
                                 + c_{1}\|\varphi_{\ep}(t)\|_{H}^2\right) 
\\ \notag 
&= -c_{1}(\mu_{\ep}(t), \varphi_{\ep}(t))_{V} 
\\ \notag
&\leq \frac{1}{2}\|\mu_{\ep}(t)\|_{V}^2 + \frac{c_{1}^2}{2}\|\varphi_{\ep}(t)\|_{V}^2 
\\ \notag 
&\leq \left(\frac{1}{2}+\frac{2c_{1}^2}{c_{0}^2}\right)\|\mu_{\ep}(t)\|_{V}^2 
         - \frac{2c_{1}^2}{c_{0}}\ep\|(-\Delta+1)\varphi_{\ep}(t)\|_{H}^2 
\\ \notag 
       &\,\quad+ \left(\frac{4c_{1}^2}{c_{0}^2}\|\nabla J\|_{L^1(\RN)}^2  
                              + \frac{2c_{1}^2}{c_{0}}\|J\|_{L^1(\RN)}
                                                                \right)\|\varphi_{\ep}(t)\|_{H}^2 
       -\frac{c_{1}^2}{c_{0}}\ep\frac{d}{dt}\|\varphi_{\ep}(t)\|_{V}^2,  
\end{align*}
that is, 
\begin{align}\label{shun5}
&\ep\left(\frac{1}{2}+\frac{c_{1}^2}{c_{0}}\right)\frac{d}{dt}\|\varphi_{\ep}(t)\|_{V}^2 
\\ \notag
  &+ \left(\frac{1}{2}-\frac{2c_{1}^2}{c_{0}^2}\right)\|\mu_{\ep}(t)\|_{V}^2 
  + \ep\|\partial_{t}\varphi_{\ep}(t)\|_{H}^2 
  + \frac{2c_{1}^2}{c_{0}}\ep\|(-\Delta+1)\varphi_{\ep}(t)\|_{H}^2 
\\ \notag
&+ \frac{1}{2}\frac{d}{dt} 
       \left(\frac{1}{2}\int_{\Omega}\int_{\Omega}
                                   J(x-y)(\varphi_{\ep}(x)-\varphi_{\ep}(y))^2\,dxdy 
              + 2\int_{\Omega}G_{\ep}(\varphi_{\ep}(t)) 
                                 + c_{1}\|\varphi_{\ep}(t)\|_{H}^2\right) 
\\ \notag 
&\leq \left(\frac{4c_{1}^2}{c_{0}^2}\|\nabla J\|_{L^1(\RN)}^2  
                              + \frac{2c_{1}^2}{c_{0}}\|J\|_{L^1(\RN)}
                                                                \right)\|\varphi_{\ep}(t)\|_{H}^2. 
\end{align}
Here it follows from \eqref{siki4} and (A7) that 
there exists $\ep_{2} \in (0, \ep_{1})$ such that 
\begin{align}\label{shun6}
&\frac{1}{2}\int_{\Omega}\int_{\Omega}
                     J(x-y)(\varphi_{\ep}(x)-\varphi_{\ep}(y))^2\,dxdy 
+ 2\int_{\Omega}G_{\ep}(\varphi_{\ep}(t)) + c_{1}\|\varphi_{\ep}(t)\|_{H}^2 
\\ \notag 
&= \int_{\Omega}a(\cdot)|\varphi_{\ep}(t)|^2 
- (\varphi_{\ep}(t), J\ast\varphi_{\ep}(t))_{H} 
+ 2\int_{\Omega}G_{\ep}(\varphi_{\ep}(t)) + c_{1}\|\varphi_{\ep}(t)\|_{H}^2 
\\ \notag
&\geq \int_{\Omega}(a(\cdot) + c_{1} - \|J\|_{L^1(\RN)})|\varphi_{\ep}(t)|^2 
          - \|\pi'\|_{L^{\infty}(\mathbb{R})}\|\varphi_{\ep}(t)\|_{H}^2 
          - 4\|\pi'\|_{L^{\infty}(\mathbb{R})}\ep\|\varphi_{\ep}(t)\|_{H}^2
\\ \notag 
&\geq \int_{\Omega}(c_{0} + c_{1} - \|J\|_{L^1(\RN)})|\varphi_{\ep}(t)|^2 
          - 4\|\pi'\|_{L^{\infty}(\mathbb{R})}\ep\|\varphi_{\ep}(t)\|_{H}^2  
\\ \notag 
& \geq \frac{c_{0} + c_{1} - \|J\|_{L^1(\RN)}}{2}\|\varphi_{\ep}(t)\|_{H}^2
\end{align}
holds for all $t\in[0, T]$ and all $\ep\in(0, \ep_{2})$. 
Thus \eqref{shun5}, \eqref{shun6} and (A8) yield that  
\begin{align*}
&\ep\left(\frac{1}{2}+\frac{c_{1}^2}{c_{0}}\right)\|\varphi_{\ep}(t)\|_{V}^2
  + \left(\frac{1}{2}-\frac{2c_{1}^2}{c_{0}^2}\right)\int_{0}^{t}\|\mu_{\ep}(s)\|_{V}^2\,ds  
  + \ep\int_{0}^{t}\|\partial_{t}\varphi_{\ep}(s)\|_{H}^2\,ds  
\\ \notag 
&+ \frac{2c_{1}^2}{c_{0}}\ep\int_{0}^{t}\|(-\Delta+1)\varphi_{\ep}(s)\|_{H}^2\,ds  
+ \frac{c_{0} + c_{1} - \|J\|_{L^1(\RN)}}{4}\|\varphi_{\ep}(t)\|_{H}^2
\\ \notag 
&\leq \left(\frac{4c_{1}^2}{c_{0}^2}\|\nabla J\|_{L^1(\RN)}^2  
                              + \frac{2c_{1}^2}{c_{0}}\|J\|_{L^1(\RN)}\right)
                                                         \int_{0}^{t}\|\varphi_{\ep}(s)\|_{H}^2\,ds 
         + C_{1} 
\end{align*}
for all $t\in[0, T]$ and all $\ep\in(0, \ep_{2})$ with some constant $C_{1}>0$. 
Therefore, 
since (A7) leads to the inequalities $\frac{1}{2}-\frac{2c_{1}^2}{c_{0}^2} > 0$ and 
$\frac{c_{0} + c_{1} - \|J\|_{L^1(\RN)}}{4} > 0$, 
there exists a constant $C_{2}=C_{2}(T)>0$ such that 
\begin{align}\label{shun7}
&\|\varphi_{\ep}(t)\|_{H}^2, \ep\|\varphi_{\ep}(t)\|_{V}^2, 
\int_{0}^{t}\|\mu_{\ep}(s)\|_{V}^2\,ds, \\ \notag 
&\ep\int_{0}^{t}\|(-\Delta+1)\varphi_{\ep}(s)\|_{H}^2\,ds, 
\ep\int_{0}^{t}\|\partial_{t}\varphi_{\ep}(s)\|_{H}^2\,ds  
\leq C_2
\end{align}
for all $t\in[0, T]$ and all $\ep\in(0, \ep_{2})$. 
We see from \eqref{shun4}, \eqref{shun7} and (A8) that 
\begin{align}\label{shun8}
\int_{0}^{t} \|\varphi_{\ep}(s)\|_{V}^2\,ds \leq C_3
\end{align}
for all $t\in[0, T]$ and all $\ep\in(0, \ep_{2})$ 
with some constant $C_{3}=C_{3}(T)>0$. 
From \eqref{defF}, \eqref{innerVstar}, \eqref{defsolep1} 
and the Young inequality we have 
\begin{align*}
\|\partial_{t}\varphi_{\ep}(t)\|_{V^{*}}^2 
&= (\partial_{t}\varphi_{\ep}(t), \partial_{t}\varphi_{\ep}(t))_{V^{*}} 
= \langle \partial_{t}\varphi_{\ep}(t), 
                                            F^{-1}\partial_{t}\varphi_{\ep}(t) \rangle_{V^{*}, V} 
\\ \notag
&= -(\mu_{\ep}(t), F^{-1}\partial_{t}\varphi_{\ep}(t))_{V} 
= \langle \partial_{t}\varphi_{\ep}(t), \mu_{\ep}(t) \rangle_{V^{*}, V} 
\\ \notag 
&\leq \frac{1}{2}\|\partial_{t}\varphi_{\ep}(t)\|_{V^{*}}^2 
         + \frac{1}{2}\|\mu_{\ep}(t)\|_{V}^2,  
\end{align*}
and hence \eqref{shun7} yields that 
\begin{align}\label{shun9}
\int_{0}^{t}\|\partial_{t}\varphi_{\ep}(s)\|_{V^{*}}^2\,ds \leq C_{4} 
\end{align}
for all $t\in[0, T]$ and all $\ep\in(0, \ep_{2})$ 
with some constant $C_{4}=C_{4}(T)>0$. 
It follows from \eqref{defsolep2} and the Young inequality that 
\begin{align*}
&\|\beta_{\ep}(\varphi_{\ep}(t))\|_{H}^2 
\\
&= (\beta_{\ep}(\varphi_{\ep}(t)), 
                  \mu_{\ep}(t) - a(\cdot)\varphi_{\ep}(t) 
                                   + J\ast\varphi_{\ep}(t) - \pi(\varphi_{\ep}(t)) 
                                                               - \ep\partial_{t}\varphi_{\ep}(t))_{H} 
\\ 
&\leq \frac{1}{2}\|\beta_{\ep}(\varphi_{\ep}(t))\|_{H}^2 
\\
    &\,\quad+ \frac{5}{2}(\|\mu_{\ep}(t)\|_{V}^2 
                        + (\|a(\cdot)\|_{L^{\infty}(\mathbb{R})}^2 
                        + \|J\|_{L^1(\RN)}^2
                        + \|\pi'\|_{L^{\infty}(\mathbb{R})})\|\varphi_{\ep}(t)\|_{H}^2
    + \ep^2\|\partial_{t}\varphi_{\ep}(t)\|_{H}^2). 
\end{align*}
Thus we derive from \eqref{shun7} that
\begin{align}\label{shun10}
\int_{0}^{t} \|\beta_{\ep}(\varphi_{\ep}(s))\|_{H}^2\,ds \leq C_{5} 
\end{align}
for all $t\in[0, T]$ and all $\ep\in(0, \ep_{2})$ 
with some constant $C_{5}=C_{5}(T)>0$.
 
Therefore combination of \eqref{shun7}-\eqref{shun10} 
means Lemma \ref{estimatesemisolPep}. 
\end{proof}

\begin{prth1.1}
Combination of Lemmas \ref{semisolPep} and \ref{estimatesemisolPep} 
leads to existence and estimates of weak solutions for \ref{Pep} 
for all $\ep \in (0, \ep_{0})$ with some constant $\ep_{0} \in (0, 1)$. 
Now, we confirm that 
the solution $(\varphi_{\ep}, \mu_{\ep})$ of the problem \ref{Pep} 
is unique. 
Assume that 
$(\varphi_{1, \ep}, \mu_{1, \ep})$ and $(\varphi_{2, \ep}, \mu_{2, \ep})$ 
are the solutions of \ref{Pep} 
with the same initial data. 
Then we derive from \eqref{defF}, \eqref{innerVstar} and \eqref{defsolep1} that 
\begin{align}\label{hanpen1}
\frac{1}{2}\frac{d}{dt}\|\varphi_{1, \ep}(t)-\varphi_{2, \ep}(t)\|_{V^{*}}^2 
&= (\partial_{t}\varphi_{1, \ep}(t)-\partial_{t}\varphi_{2, \ep}(t), 
                                                \varphi_{1, \ep}(t)-\varphi_{2, \ep}(t))_{V^{*}} 
\\ \notag
&= \langle \partial_{t}\varphi_{1, \ep}(t)-\partial_{t}\varphi_{2, \ep}(t), 
                          F^{-1}(\varphi_{1, \ep}(t)-\varphi_{2, \ep}(t)) \rangle_{V^{*}, V} 
\\ \notag
&= -(\mu_{1, \ep}(t)-\mu_{2, \ep}(t), 
                                             F^{-1}(\varphi_{1, \ep}(t)-\varphi_{2, \ep}(t)))_{V} 
\\ \notag
&= -\langle \varphi_{1, \ep}(t)-\varphi_{2, \ep}(t), 
                                     \mu_{1, \ep}(t)-\mu_{2, \ep}(t) \rangle_{V^{*}, V} 
\\ \notag
&= -(\varphi_{1, \ep}(t)-\varphi_{2, \ep}(t), 
                                     \mu_{1, \ep}(t)-\mu_{2, \ep}(t))_{H}  
\end{align}
for all $t\in[0, T]$ and all $\ep \in (0, \ep_{0})$, 
and from \eqref{defsolep2} that 
\begin{align}\label{hanpen2}
&(\varphi_{1, \ep}(t)-\varphi_{2, \ep}(t), 
                                     \mu_{1, \ep}(t)-\mu_{2, \ep}(t))_{H}  
\\ \notag
&= \ep\|\varphi_{1, \ep}(t)-\varphi_{2, \ep}(t)\|_{V}^2 
    + \|\sqrt{a(\cdot)}(\varphi_{1, \ep}(t)-\varphi_{2, \ep}(t))\|_{H}^2 
\\ \notag
    &\,\quad- \langle \varphi_{1, \ep}(t)-\varphi_{2, \ep}(t), 
                          J\ast(\varphi_{1, \ep}(t)-\varphi_{2, \ep}(t)) \rangle_{V^{*}, V} 
   \\ \notag
    &\,\quad+ (\beta_{\ep}(\varphi_{1, \ep}(t))-\beta_{\ep}(\varphi_{2, \ep}(t)), 
                                                     \varphi_{1, \ep}(t)-\varphi_{2, \ep}(t))_{H} 
   \\ \notag
    &\,\quad+ (\pi(\varphi_{1, \ep}(t))-\pi(\varphi_{2, \ep}(t)), 
                                                     \varphi_{1, \ep}(t)-\varphi_{2, \ep}(t))_{H} 
     + \frac{\ep}{2}\frac{d}{dt}\|\varphi_{1, \ep}-\varphi_{2, \ep}\|_{H}^2 
\end{align}
for all $t\in[0, T]$ and all $\ep \in (0, \ep_{0})$. 
Hence \eqref{hanpen1}, \eqref{hanpen2} and the monotonicity of $\beta_{\ep}$ 
yield that 
\begin{align}\label{hanpen3}
&\frac{1}{2}\frac{d}{dt}\|\varphi_{1, \ep}(t)-\varphi_{2, \ep}(t)\|_{V^{*}}^2 
+ \|\sqrt{a(\cdot)}(\varphi_{1, \ep}(t)-\varphi_{2, \ep}(t))\|_{H}^2 
+ \frac{\ep}{2}\frac{d}{dt}\|\varphi_{1, \ep}-\varphi_{2, \ep}\|_{H}^2
\\ \notag 
&\leq \langle \varphi_{1, \ep}(t)-\varphi_{2, \ep}(t), 
                          J\ast(\varphi_{1, \ep}(t)-\varphi_{2, \ep}(t)) \rangle_{V^{*}, V} 
+\|\pi'\|_{L^{\infty}(\mathbb{R})}
                                            \|\varphi_{1, \ep}(t)-\varphi_{2, \ep}(t)\|_{H}^2
\end{align}
for all $t\in[0, T]$ and all $\ep \in (0, \ep_{0})$. 
Here we see from the Young inequality that 
\begin{align}\label{hanpen4}
&\langle \varphi_{1, \ep}(t)-\varphi_{2, \ep}(t), 
                          J\ast(\varphi_{1, \ep}(t)-\varphi_{2, \ep}(t)) \rangle_{V^{*}, V} 
\\ \notag
&\leq \frac{\|J\|_{W^{1, 1}(\RN)}^2}{c_{0}}
                                           \|\varphi_{1, \ep}(t)-\varphi_{2, \ep}(t)\|_{V^{*}}^2 
         + \frac{c_{0}}{4\|J\|_{W^{1, 1}(\RN)}^2}
                                       \|J\ast(\varphi_{1, \ep}(t)-\varphi_{2, \ep}(t))\|_{V}^2 
\\ \notag
&\leq \frac{\|J\|_{W^{1, 1}(\RN)}^2}{c_{0}}
                                           \|\varphi_{1, \ep}(t)-\varphi_{2, \ep}(t)\|_{V^{*}}^2 
         + \frac{c_{0}}{2}\|\varphi_{1, \ep}(t)-\varphi_{2, \ep}(t)\|_{H}^2                         
\end{align}
for all $t\in[0, T]$ and all $\ep \in (0, \ep_{0})$. 
Therefore it follows from 
\eqref{defsolep3}, \eqref{hanpen3}, \eqref{hanpen4} and (A7) that 
\begin{align*}
&\frac{1}{2}\|\varphi_{1, \ep}(t)-\varphi_{2, \ep}(t)\|_{V^{*}}^2 
+ \frac{c_{0}}{2}\int_{0}^{t} \|\varphi_{1, \ep}(t)-\varphi_{2, \ep}(t)\|_{H}^2\,ds 
\\ \notag
&\leq \frac{\|J\|_{W^{1, 1}(\RN)}^2}{c_{0}}\int_{0}^{t}
                                      \|\varphi_{1, \ep}(s)-\varphi_{2, \ep}(s)\|_{V^{*}}^2\,ds 
\end{align*}
for all $t\in[0, T]$ and all $\ep \in (0, \ep_{0})$.  
Thus we can prove that the solution of the problem \ref{Pep} is unique. \qed
\end{prth1.1}
 \vspace{10pt}

\section{Existence of solutions to \eqref{P}}\label{Sec5}

This section gives the proof of Theorem \ref{maintheorem2} 
by the following lemma which presents  
Cauchy's criterion for solutions of \ref{Pep}. 
\begin{lem}\label{Cauchy}
Let $\ep_{0}$ and $(\varphi_{\ep}, \mu_{\ep})$ 
be as in Theorem \ref{maintheorem1}. 
Then it holds that 
\begin{align}\label{Cauchycriterion}
\|\varphi_{\ep}-\varphi_{\gamma}\|_{C([0, T]; V^{*})}^2 
+ \int_{0}^{T} \|\varphi_{\ep}(t)-\varphi_{\gamma}(t)\|_{H}^2\,dt 
\leq C(\ep^{1/2}+\gamma^{1/2}) + C\|\varphi_{0\ep}-\varphi_{0\gamma}\|_{V^{*}}^2
\end{align}
for all $\ep, \gamma \in (0, \ep_{0})$ with some constant $C=C(T)>0$.
\end{lem}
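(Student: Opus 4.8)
The plan is to run the $V^{*}$-energy estimate used for uniqueness at the end of the proof of Theorem \ref{maintheorem1}, now applied to the difference of two solutions with \emph{different} parameters $\ep\neq\gamma$, and to absorb the resulting discrepancy terms via the weighted a priori bounds \eqref{essolep1} and \eqref{essolep4}, which are uniform in $\ep$.

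Set $\varphi:=\varphi_\ep-\varphi_\gamma$. Subtracting \eqref{defsolep1} for $\gamma$ from \eqref{defsolep1} for $\ep$ and testing with $F^{-1}\varphi(t)$ exactly as in \eqref{hanpen1} gives $\frac{1}{2}\frac{d}{dt}\|\varphi(t)\|_{V^{*}}^{2}+\bigl(\varphi(t),\mu_\ep(t)-\mu_\gamma(t)\bigr)_{H}=0$. Inserting \eqref{defsolep2}, I would split $\mu_\ep-\mu_\gamma$ into the pieces $a(\cdot)\varphi-J\ast\varphi$, $\beta_\ep(\varphi_\ep)-\beta_\gamma(\varphi_\gamma)$, $\pi(\varphi_\ep)-\pi(\varphi_\gamma)$, $\ep(-\Delta+1)\varphi_\ep-\gamma(-\Delta+1)\varphi_\gamma$, and $\ep\partial_t\varphi_\ep-\gamma\partial_t\varphi_\gamma$. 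The first and third pieces are handled exactly as in \eqref{hanpen2}--\eqref{hanpen4}: by (A7), the Young inequality and $\|J\ast\varphi\|_{V}\le\|J\|_{W^{1,1}(\RN)}\|\varphi\|_{H}$, their contribution to $(\varphi,\mu_\ep-\mu_\gamma)_{H}$ is $\ge\frac{c_0}{2}\|\varphi\|_{H}^{2}-\frac{\|J\|_{W^{1,1}(\RN)}^{2}}{c_0}\|\varphi\|_{V^{*}}^{2}$. For the $\beta$-piece, using the resolvent identity $\varphi=(J_{\ep}^{\beta}\varphi_\ep-J_{\gamma}^{\beta}\varphi_\gamma)+(\ep\beta_\ep(\varphi_\ep)-\gamma\beta_\gamma(\varphi_\gamma))$ together with the monotonicity of $\beta$, one obtains $(\beta_\ep(\varphi_\ep)-\beta_\gamma(\varphi_\gamma),\varphi)_{H}\ge-(\ep+\gamma)\|\beta_\ep(\varphi_\ep)\|_{H}\|\beta_\gamma(\varphi_\gamma)\|_{H}$, whose time-integral is $\ge-(\ep+\gamma)M_{1}$ by \eqref{essolep4}.

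The substantive part is the two regularizing pieces. For $\ep(-\Delta+1)\varphi_\ep-\gamma(-\Delta+1)\varphi_\gamma$, expanding the inner product isolates the nonnegative diagonal terms $\ep\|\varphi_\ep\|_{V}^{2}$, $\gamma\|\varphi_\gamma\|_{V}^{2}$ and leaves cross terms bounded below by $-\|\varphi_\gamma\|_{H}\|\ep(-\Delta+1)\varphi_\ep\|_{H}-\|\varphi_\ep\|_{H}\|\gamma(-\Delta+1)\varphi_\gamma\|_{H}$; since \eqref{essolep1} gives $\|\ep(-\Delta+1)\varphi_\ep\|_{L^{2}(0,T;H)}^{2}\le\ep M_{1}$ and $\|\varphi_\ep\|_{L^{\infty}(0,T;H)}^{2}\le M_{1}$, the time-integral of this piece is $\ge-M_{1}T^{1/2}(\ep^{1/2}+\gamma^{1/2})$. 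For $\ep\partial_t\varphi_\ep-\gamma\partial_t\varphi_\gamma$, the diagonal parts $\ep(\varphi_\ep,\partial_t\varphi_\ep)_{H}=\frac{\ep}{2}\frac{d}{dt}\|\varphi_\ep\|_{H}^{2}$ and $\gamma(\varphi_\gamma,\partial_t\varphi_\gamma)_{H}$ integrate to $\ge-\frac{c_{2}}{2}(\ep+\gamma)$ by (A8), and the cross terms are estimated with $\|\ep\partial_t\varphi_\ep\|_{L^{2}(0,T;H)}^{2}\le\ep M_{1}$ from \eqref{essolep1}. Collecting all of this, integrating over $(0,t)$, using $\varphi(0)=\varphi_{0\ep}-\varphi_{0\gamma}$ and $\ep,\gamma\in(0,1)$ (so $\ep+\gamma\le\ep^{1/2}+\gamma^{1/2}$), I arrive at
\[
\tfrac12\|\varphi(t)\|_{V^{*}}^{2}+\tfrac{c_0}{2}\int_{0}^{t}\|\varphi(s)\|_{H}^{2}\,ds
\le \tfrac12\|\varphi_{0\ep}-\varphi_{0\gamma}\|_{V^{*}}^{2}+C(\ep^{1/2}+\gamma^{1/2})+\tfrac{\|J\|_{W^{1,1}(\RN)}^{2}}{c_0}\int_{0}^{t}\|\varphi(s)\|_{V^{*}}^{2}\,ds
\]
for all $t\in[0,T]$, with $C=C(T)$. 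Gronwall's lemma then bounds $\|\varphi\|_{C([0,T];V^{*})}^{2}$ by $C(\ep^{1/2}+\gamma^{1/2})+C\|\varphi_{0\ep}-\varphi_{0\gamma}\|_{V^{*}}^{2}$, and reinserting this into the displayed inequality bounds $\int_{0}^{T}\|\varphi(t)\|_{H}^{2}\,dt$ the same way; adding the two gives \eqref{Cauchycriterion}. The only real obstacle is the bookkeeping: one must ensure that in every discrepancy term the ``small factor'' (one of $\ep(-\Delta+1)\varphi_\ep$, $\ep\partial_t\varphi_\ep$, $\beta_\ep(\varphi_\ep)$, $\varphi_{0\ep}$, and their $\gamma$-analogues) is paired only against a quantity bounded uniformly in $\ep,\gamma$, so that the constant $C$ depends only on $T$.
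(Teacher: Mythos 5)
Your proposal is correct and follows essentially the same route as the paper's proof of Lemma \ref{Cauchy}: testing the difference equation with $F^{-1}(\varphi_\ep-\varphi_\gamma)$, absorbing the $a$, $J\ast$ and $\pi$ terms via (A7) and Young's inequality, treating the $\beta$-difference with the resolvent identity and monotonicity, bounding the $\ep(-\Delta+1)\varphi_\ep$, $\ep\partial_t\varphi_\ep$ discrepancies by the uniform estimates \eqref{essolep1}, \eqref{essolep4} to produce the $O(\ep^{1/2}+\gamma^{1/2})$ terms, and closing with Gronwall. Your bookkeeping of the cross terms is exactly the detail the paper leaves implicit, and it checks out.
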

\begin{proof}
We see from \eqref{defF}, \eqref{innerVstar} and \eqref{defsolep1} that 
\begin{align}\label{pen1}
\frac{1}{2}\frac{d}{dt}\|\varphi_{\ep}(t)-\varphi_{\gamma}(t)\|_{V^{*}}^2 
&= (\partial_{t}\varphi_{\ep}(t)-\partial_{t}\varphi_{\gamma}(t), 
                                                \varphi_{\ep}(t)-\varphi_{\gamma}(t))_{V^{*}} 
\\ \notag
&= \langle \partial_{t}\varphi_{\ep}(t)-\partial_{t}\varphi_{\gamma}(t), 
                          F^{-1}(\varphi_{\ep}(t)-\varphi_{\gamma}(t)) \rangle_{V^{*}, V} 
\\ \notag
&= -(\mu_{\ep}(t)-\mu_{\gamma}(t), 
                                             F^{-1}(\varphi_{\ep}(t)-\varphi_{\gamma}(t)))_{V} 
\\ \notag
&= -\langle \varphi_{\ep}(t)-\varphi_{\gamma}(t), 
                                     \mu_{\ep}(t)-\mu_{\gamma}(t) \rangle_{V^{*}, V} 
\\ \notag
&= -(\varphi_{\ep}(t)-\varphi_{\gamma}(t), 
                                     \mu_{\ep}(t)-\mu_{\gamma}(t))_{H}  
\end{align}
for all $t\in[0, T]$ and all $\ep, \gamma \in (0, \ep_{0})$, 
and \eqref{defsolep2} yields that 
\begin{align}\label{pen2}
&(\varphi_{\ep}(t)-\varphi_{\gamma}(t), 
                                     \mu_{\ep}(t)-\mu_{\gamma}(t))_{H}  
\\ \notag
&= \ep((-\Delta+1)\varphi_{\ep}(t), \varphi_{\ep}(t)-\varphi_{\gamma}(t))_{H} 
     - \gamma((-\Delta+1)\varphi_{\gamma}(t), 
                                            \varphi_{\ep}(t)-\varphi_{\gamma}(t))_{H} 
    \\ \notag
    &\,\quad+ \|\sqrt{a(\cdot)}(\varphi_{\ep}(t)-\varphi_{\gamma}(t))\|_{H}^2 
    - \langle \varphi_{\ep}(t)-\varphi_{\gamma}(t), 
                          J\ast(\varphi_{\ep}(t)-\varphi_{\gamma}(t)) \rangle_{V^{*}, V} 
   \\ \notag
    &\,\quad+ (\beta_{\ep}(\varphi_{\ep}(t))-\beta_{\gamma}(\varphi_{\gamma}(t)), 
                                                     \varphi_{\ep}(t)-\varphi_{\gamma}(t))_{H} 
   \\ \notag
    &\,\quad+ (\pi(\varphi_{\ep}(t))-\pi(\varphi_{\gamma}(t)), 
                                                     \varphi_{\ep}(t)-\varphi_{\gamma}(t))_{H} 
     + (\ep\partial_{t}\varphi_{\ep}(t) - \gamma\partial_{t}\varphi_{\gamma}(t), 
                                                 \varphi_{\ep}(t)-\varphi_{\gamma}(t))_{H} 
\end{align}
for all $t\in[0, T]$ and all $\ep, \gamma \in (0, \ep_{0})$. 
Thus, by \eqref{pen1} and \eqref{pen2} it holds that 
\begin{align}\label{pen3}
&\frac{1}{2}\frac{d}{dt}\|\varphi_{\ep}(t)-\varphi_{\gamma}(t)\|_{V^{*}}^2 
+ \|\sqrt{a(\cdot)}(\varphi_{\ep}(t)-\varphi_{\gamma}(t))\|_{H}^2 
\\ \notag 
&= \gamma((-\Delta+1)\varphi_{\gamma}(t), 
                                            \varphi_{\ep}(t)-\varphi_{\gamma}(t))_{H} 
   - \ep((-\Delta+1)\varphi_{\ep}(t), \varphi_{\ep}(t)-\varphi_{\gamma}(t))_{H} 
\\ \notag
&\,\quad+ \langle \varphi_{\ep}(t)-\varphi_{\gamma}(t), 
                          J\ast(\varphi_{\ep}(t)-\varphi_{\gamma}(t)) \rangle_{V^{*}, V} 
\\ \notag 
&\,\quad- (\beta_{\ep}(\varphi_{\ep}(t))-\beta_{\gamma}(\varphi_{\gamma}(t)), 
                                                     \varphi_{\ep}(t)-\varphi_{\gamma}(t))_{H} 
\\ \notag
&\,\quad- (\pi(\varphi_{\ep}(t))-\pi(\varphi_{\gamma}(t)), 
                                                     \varphi_{\ep}(t)-\varphi_{\gamma}(t))_{H} 
- (\ep\partial_{t}\varphi_{\ep}(t) - \gamma\partial_{t}\varphi_{\gamma}(t), 
                                                 \varphi_{\ep}(t)-\varphi_{\gamma}(t))_{H} 
\end{align}
for all $t\in[0, T]$ and all $\ep, \gamma \in (0, \ep_{0})$. 
Here we have from the Young inequality that 
\begin{align}\label{pen4}
&\langle \varphi_{\ep}(t)-\varphi_{\gamma}(t), 
                          J\ast(\varphi_{\ep}(t)-\varphi_{\gamma}(t)) \rangle_{V^{*}, V} 
\\ \notag
&\leq \frac{\|J\|_{W^{1, 1}(\RN)}^2}{c_{0}}
                                           \|\varphi_{\ep}(t)-\varphi_{\gamma}(t)\|_{V^{*}}^2 
         + \frac{c_{0}}{4\|J\|_{W^{1, 1}(\RN)}^2}
                                       \|J\ast(\varphi_{\ep}(t)-\varphi_{\gamma}(t))\|_{V}^2 
\\ \notag
&\leq \frac{\|J\|_{W^{1, 1}(\RN)}^2}{c_{0}}
                                           \|\varphi_{\ep}(t)-\varphi_{\gamma}(t)\|_{V^{*}}^2 
         + \frac{c_{0}}{2}\|\varphi_{\ep}(t)-\varphi_{\gamma}(t)\|_{H}^2                         
\end{align}
for all $t\in[0, T]$ and all $\ep, \gamma \in (0, \ep_{0})$, 
and we derive from the monotonicity of $\beta$ that 
\begin{align}\label{pen5}
&-(\beta_{\ep}(\varphi_{\ep}(t))-\beta_{\gamma}(\varphi_{\gamma}(t)), 
                                                     \varphi_{\ep}(t)-\varphi_{\gamma}(t))_{H} 
\\ \notag
&= -(\beta(J_{\ep}^{\beta}(\varphi_{\ep}(t)))
                      -\beta(J_{\gamma}^{\beta}(\varphi_{\gamma}(t))), 
       J_{\ep}^{\beta}(\varphi_{\ep}(t))-J_{\gamma}^{\beta}(\varphi_{\gamma}(t)))_{H} 
\\ \notag
&\,\quad-(\beta_{\ep}(\varphi_{\ep}(t))-\beta_{\gamma}(\varphi_{\gamma}(t)), 
  \ep\beta_{\ep}(\varphi_{\ep}(t))-\gamma\beta_{\gamma}(\varphi_{\gamma}(t)))_{H} 
\\ \notag
&\leq -(\beta_{\ep}(\varphi_{\ep}(t))-\beta_{\gamma}(\varphi_{\gamma}(t)), 
  \ep\beta_{\ep}(\varphi_{\ep}(t))-\gamma\beta_{\gamma}(\varphi_{\gamma}(t)))_{H} 
\end{align}
for all $t\in[0, T]$ and all $\ep, \gamma \in (0, \ep_{0})$. 
Therefore it follows from 
(A7), \eqref{essolep1}, \eqref{essolep4}, 
\eqref{pen3}, \eqref{pen4} and \eqref{pen5} that 
there exists a constant $C_{1}>0$ such that  
\begin{align*}
&\frac{1}{2}\|\varphi_{\ep}(t)-\varphi_{\gamma}(t)\|_{V^{*}}^2 
+ \frac{c_{0}}{2}\int_{0}^{t} \|\varphi_{\ep}(t)-\varphi_{\gamma}(t)\|_{H}^2\,ds 
\\ \notag
&\leq \frac{1}{2}\|\varphi_{0\ep}-\varphi_{0\gamma}\|_{V^{*}}^2 
        + \frac{\|J\|_{W^{1, 1}(\RN)}^2}{c_{0}}\int_{0}^{t}
                                      \|\varphi_{\ep}(s)-\varphi_{\gamma}(s)\|_{V^{*}}^2\,ds 
        + C_{1}(\ep^{1/2} + \gamma^{1/2})
\end{align*}
for all $t\in[0, T]$ and all $\ep, \gamma \in (0, \ep_{0})$, 
and hence we can obtain this lemma. 
\end{proof}
\begin{prth1.2}
We see from Lemma \ref{Cauchy} and (A8) that 
$\{\varphi_{\ep}\}_{\ep \in (0, \ep_{0})}$ 
satisfies Cauchy's criterion in $C([0, T]; V^*) \cap L^2(0, T; H)$, 
and hence there exists a function $\varphi \in C([0, T]; V^*)\cap L^2(0, T; H)$ 
such that 
\begin{align}\label{SK}
\varphi_{\ep} \to \varphi \quad \mbox{in}\ C([0, T]; V^*) 
\ \mbox{and}\ \mbox{in}\ L^2(0, T; H)
\end{align}
as $\ep \searrow 0$.
It follows from \eqref{SK} and (A8) that 
$$
\varphi(0) = \varphi_{0} \quad \mbox{in}\ V^*
$$
and, since $\varphi_{0} \in H$, it holds that 
\begin{align}\label{shokiti}
\varphi(0) = \varphi_{0} \quad \mbox{in}\ H.
\end{align}
By \eqref{essolep1}-\eqref{essolep4} it holds that 
there exist some functions 
\begin{align*}
&v \in H^1(0, T; V^{*}) \cap L^{\infty}(0, T; H) \cap L^2(0, T; V),  
\\
&\mu \in L^2(0, T; V), 
\\
&\xi \in L^2(0, T; H)
\end{align*}
such that 
\begin{align}
&\varphi_{\ep} \to v \quad \mbox{weakly$^{*}$ in}\ L^{\infty}(0, T; H), 
\label{conv1} \\ 
&\partial_{t}\varphi_{\ep} \to v_{t}  \quad \mbox{weakly$^{*}$ in}\ 
L^2(0, T; V^{*}), 
\label{conv2} \\ 
&\ep(-\Delta+1)\varphi_{\ep} \to 0 \quad \mbox{in}\ L^2(0, T; H), 
\label{conv3} \\ 
&\ep\partial_{t}\varphi_{\ep} \to 0 \quad \mbox{in}\ L^2(0, T; H), 
\label{conv4} \\ 
&\varphi_{\ep} \to v \quad \mbox{weakly in}\ L^2(0, T; V), 
\label{conv5} \\ 
&\mu_{\ep} \to \mu \quad \mbox{weakly in}\ L^2(0, T; V), 
\label{conv6} \\ 
&\beta_{\ep}(\varphi_{\ep}) \to \xi \quad \mbox{weakly in}\ L^2(0, T; H) 
\label{conv7} 
\end{align}
as $\ep=\ep_{j}\searrow0$. 
Now, we show that 
\begin{align}\label{varphi=v}
\varphi = v \quad \mbox{a.e.\ on}\ \Omega\times (0, T).
\end{align}
Let $\psi \in L^2(0, T; H)$. 
Then it follows from \eqref{SK} that 
\begin{align}\label{pote1}
\int_{0}^{T} (\varphi_{\ep}(t) - v(t), \psi(t))_{H}\,dt 
\to \int_{0}^{T} (\varphi(t) - v(t), \psi(t))_{H}\,dt 
\end{align}
as $\ep=\ep_{j}\searrow0$. 
On the other hand, we infer from \eqref{conv1} that   
\begin{align}\label{pote2}
\int_{0}^{T} (\varphi_{\ep}(t) - v(t), \psi(t))_{H}\,dt 
\to 0
\end{align}
as $\ep=\ep_{j}\searrow0$. 
Hence from \eqref{pote1} and \eqref{pote2} the identity 
$$
\int_{0}^{T} (\varphi(t) - v(t), \psi(t))_{H}\,dt 
= 0
$$
holds for all $\psi \in L^2(0, T; H)$. 
Thus we obtain \eqref{varphi=v}.  
Consequently, 
from \eqref{conv1}, \eqref{conv2} and \eqref{conv5} 
it holds that  
$\varphi \in H^1(0, T; V^*) \cap L^{\infty}(0, T; H) \cap L^2(0, T; V)$ and 
\begin{align}
&\varphi_{\ep} \to \varphi \quad \mbox{weakly$^{*}$ in}\ L^{\infty}(0, T; H), 
\label{oden1} \\ 
&\partial_{t}\varphi_{\ep} \to \varphi_{t}  
\quad \mbox{weakly$^{*}$ in}\ L^2(0, T; V^{*}), 
\label{oden2} \\ 
&\varphi_{\ep} \to \varphi \quad \mbox{weakly in}\ L^2(0, T; V)  
\label{oden3} 
\end{align}
as $\ep=\ep_{j}\searrow0$. 
From \eqref{defsolep2}, \eqref{SK}, \eqref{conv3}, 
\eqref{conv4}, \eqref{conv6} and \eqref{conv7} we have 
\begin{equation}\label{muxif}
\mu = a(\cdot)\varphi - J\ast\varphi + \xi + \pi(\varphi)  
\quad \mbox{a.e.\ on}\ \Omega\times(0, T).  
\end{equation} 

Next we confirm that 
\begin{equation}\label{betaxi}
\xi = \beta(\varphi) \quad \mbox{a.e.\ on}\ \Omega\times(0, T).
\end{equation}
To verify \eqref{betaxi} it suffices to show that 
\begin{equation}\label{limsup}
\limsup_{\ep\to\infty}\int_0^T (\beta_{\ep}(\varphi_{\ep}(t)), \varphi_{\ep}(t))_{H}\,dt 
\leq \int_0^T (\xi(t), \varphi(t))_{H}\,dt
\end{equation}
(see \cite[Lemma 1.3, p.\ 42]{Barbu1}). 
We see from \eqref{SK} and \eqref{conv7} that 
\begin{align*}
\int_0^T (\beta_{\ep}(\varphi_{\ep}(t)), \varphi_{\ep}(t))_{H}\,dt 
\to \int_{0}^{T} (\xi(t), \varphi(t))_{H}\,dt
\end{align*}
as $\ep=\ep_{j}\searrow0$, 
which means \eqref{limsup}, i.e., \eqref{betaxi}. 

Next we show that there exists a constant $C_1=C_{1}(T)>0$ such that 
\begin{equation}\label{estibeta}
\int_0^t \|\beta(u(s))\|^2_{V}\,ds \leq C_1
\end{equation}
for all $t\in[0, T]$. 
It follows from 
\eqref{essolep1}, \eqref{essolep2}, \eqref{conv6} and \eqref{oden3} that 
there exists a constant $C_2=C_{2}(T) > 0$ such that 
\begin{equation}\label{apmu}
\int_0^t \|\varphi(s)\|^2_{V}\,ds, \int_0^t \|\mu(s)\|^2_{V}\,ds \leq C_2
\end{equation}
for all $t \in [0, T]$.
Therefore, noting that $\mu \in L^2(0, T; V)$ and $\varphi \in L^2(0, T; V)$, 
we derive from \eqref{muxif}, \eqref{betaxi} and \eqref{apmu} 
that $\beta(\varphi) \in L^2(0, T; V)$ and 
\begin{align*}
\int_0^t \|\beta(u(s))\|^2_{V}\,ds 
&= \int_0^t 
      \|\mu(s) - a(\cdot)\varphi(s) + J\ast\varphi(s) - \pi(\varphi(s))\|^2_{V}\,ds \\
&\leq C_{3} 
\end{align*}
for all $t\in[0, T]$ with some constant $C_{3}=C_{3}(T)>0$,
which implies \eqref{estibeta}. 
Hence, by \eqref{defsolep1}, \eqref{shokiti}, \eqref{conv6}, \eqref{oden2},  
\eqref{muxif} and \eqref{betaxi} 
we have proved that $(u, \mu)$ is a solution of \eqref{P}. 
Moreover, from \eqref{essolep1}, \eqref{essolep3}, 
\eqref{oden1}, \eqref{oden2}, \eqref{estibeta} and \eqref{apmu} 
we obtain \eqref{es1}--\eqref{es4}. 

Finally, we can verify that the solution $(u, \mu)$ of the problem \eqref{P} is unique 
in a similar way to the proof of Theorem \ref{maintheorem1}.
\qed
\end{prth1.2}


\section{Energy estimate for \eqref{P}}\label{Sec6}

\begin{prth1.3}
We see from \eqref{defsol1} and \eqref{defsol2} that    
\begin{align*}
\frac{d}{dt}E(t) 
&= \frac{d}{dt}\left(\frac{1}{2}\|\sqrt{a(\cdot)}\varphi(t)\|_{H}^2 
                         - \frac{1}{2}(\varphi(t), J\ast\varphi(t))_{H}
                         +\int_{\Omega}G(\varphi(t)) \right)                         
\\ 
&=\langle \varphi_{t}(t), 
                      a(\cdot)\varphi(t)-J\ast\varphi(t) + G'(\varphi(t))  
                                                                                 \rangle_{V^{*}, V} 
\\ 
&=\langle \varphi_{t}(t), \mu(t) \rangle_{V^{*}, V} 
\\ 
&= - \|\mu(t)\|_{V}^2, 
\end{align*}
which implies Theorem \ref{maintheorem3}. \qed
\end{prth1.3}


\section{Error estimate between \eqref{P} and {\rm \ref{Pep}}}\label{Sec7}

\begin{prth1.4}
We can obtain Theorem \ref{maintheorem4} by Lemma \ref{Cauchy}. \qed
\end{prth1.4}

%
 
\end{document}